\newtheorem{theorem}{Theorem} [section]
\newtheorem{lemma}[theorem]{Lemma}
\newtheorem{remark}[theorem]{Remark}
\newtheorem{definition}[theorem]{Definition}
\definecolor{gr}{rgb}   {0.,   0.69,   0.23 }
\definecolor{bl}{rgb}   {0.,   0.5,   1. }
\definecolor{mg}{rgb}   {0.85,  0.,    0.85}
\definecolor{yl}{rgb}   {0.8,  0.7,   0.}
\definecolor{or}{rgb}  {0.7,0.2,0.2}
\DeclareMathOperator*{\intt}{\int}
\newcommand{\noi}{\noindent}
\newcommand{\Z}{\mathbb{Z}}
\newcommand{\R}{\mathbb{R}}
\newcommand{\T}{\mathbb{T}}
\let\P= \undefined
\newcommand{\P}{\mathbf{P}}
\newcommand{\I}{\mathcal{I}}
\newcommand{\TT}{\mathcal{T}}
\newcommand{\TF}{\mathfrak{T}}
\newcommand{\BT}{{\bf T}}%{\mathfrak{T}}
\newcommand{\SF}{\mathfrak{S}}
\newcommand{\F}{\mathcal{F}}
\newcommand{\al}{ \alpha}
\newcommand{\dl}{\delta}
\newcommand{\eps}{\varepsilon}
\newcommand{\G}{\Gamma}
\newcommand{\s}{\sigma}
\newcommand{\ft}{\widehat}
\newcommand{\wt}{\widetilde}
\newcommand{\cj}{\overline}
\newcommand{\dx}{\partial_x}
\newcommand{\dt}{\partial_t}
\newcommand{\dd}{\partial}
\newcommand{\LRA}{\Longrightarrow}
\renewcommand{\l}{\ell}
\newcommand{\les}{\lesssim}
\newcommand{\ges}{\gtrsim}
\newcommand{\jb}[1]
{\langle #1 \rangle}
\newcommand{\ind}{\mathbf 1}
\renewcommand{\S}{\mathcal{S}}
\newcommand{\M}{\mathcal{M}}
\newtheorem*{ackno}{Acknowledgements}
\numberwithin{equation}{section}
\numberwithin{theorem}{section}
\newcommand{\N}{\mathcal{N}}
\newcommand{\NB}{\mathbb{N}}
\def\NN{{\mathbb{N}}}
\def\DeclareSymbol#1#2#3{\expandafter\gdef\csname MH@symb@#1\endcsname{\tikz[baseline=#2,scale=0.15]{#3}}}
\def\<#1>{\csname MH@symb@#1\endcsname}
\tikzset{
	dot/.style={circle,fill=black,draw=black,inner sep=1pt,minimum size=0.5mm},
	>=stealth,
	}
\tikzset{
	ddot/.style={circle,fill=white,draw=black,inner sep=2pt,minimum size=0.8mm},
	>=stealth,
	}
\tikzset{
	dddot/.style={circle,fill=white,draw=black,inner sep=1.2pt,minimum size=0.5mm},
	>=stealth,
	}
\tikzset{dia/.style
={star,fill=white,draw=black,inner sep=1.2pt,minimum size=1mm},
	>=stealth,
	}
\tikzset{dia2/.style
={star,fill=white,draw=black,inner sep=1.8pt,minimum size=1mm},
	>=stealth,
	}
\tikzset{decision/.style={ % requires library shapes.geometric
        draw,
        diamond,
        aspect=1.5
    }}
\tikzset{dia3/.style
={diamond,fill=white,draw=black,inner sep=2pt,minimum size=1mm},
	>=stealth,
	}
\tikzset{dia4/.style
={diamond,fill=white,draw=black,inner sep=1pt,minimum size=1mm},
	>=stealth,
	}
\tikzstyle{dot1} = [ draw=  gray!00, 
\tikzstyle{dot2} = [ draw=  black, 
\tikzstyle{dot3} = [ draw=  gray!00, 
\begin{document}
\baselineskip = 14pt
\selectlanguage{english}

\title[Normal form approach to unconditional Well-posedness on $\R$]
{Normal form approach to unconditional well-posedness of nonlinear dispersive PDEs on the real line}

\author[S. Kwon]{Soonsik Kwon}
\address{Soonsik Kwon\\ Department of Mathematical Sciences, Korea Advanced Institute of Science and Technology,
291 Daehak-ro Yuseong-gu, Daejeon 34141, Republic of Korea}

\email{soonsikk@kaist.edu}

\author[T. Oh]{Tadahiro Oh}
\address{
Tadahiro Oh, School of Mathematics\\
The University of Edinburgh\\
and The Maxwell Institute for the Mathematical Sciences\\
James Clerk Maxwell Building\\
The King's Buildings\\
Peter Guthrie Tait Road\\
Edinburgh\\ 
EH9 3FD\\
 United Kingdom}
\email{hiro.oh@ed.ac.uk}

\author[H. Yoon]{Haewon Yoon}

\address{Haewon Yoon\\Department of Mathematical Sciences, Korea Advanced Institute of Science and Technology,
291 Daehak-ro Yuseong-gu, Daejeon 34141, Republic of Korea}

\curraddr{National Center for Theoretical Sciences, No. 1 Sec. 4 Roosevelt Rd., National Taiwan University, Taipei, 10617, Taiwan}

\email{hwyoon@ncts.ntu.edu.tw}
%\email{haewon\underline{ }v@kaist.ac.kr}

\keywords{nonlinear Schr\"odinger equation; modified KdV equation; normal form reduction; unconditional well-posedness; 
unconditional uniqueness}
\subjclass[2010]{35Q55, 35Q53}

\maketitle

\vspace{-3mm}

\begin{abstract}
In this paper, we revisit 
the  infinite iteration scheme of normal form reductions, 
introduced by the first and second authors (with Z.\,Guo), 
in constructing solutions to nonlinear dispersive PDEs.
Our main goal is to present a simplified approach to this method.
More precisely, 
we study   normal form reductions in an  abstract form
and  reduce multilinear estimates of arbitrarily  high degrees
to successive applications of basic  trilinear estimates.
As an application, 
we prove unconditional well-posedness of canonical nonlinear dispersive equations on the real line.
In particular, we implement this simplified approach to an  infinite iteration of normal form reductions
in the context of   the cubic nonlinear Schr\"odinger equation (NLS)
and  the modified KdV equation (mKdV) on the real line
and prove unconditional well-posedness in $H^s(\R)$
with  (i) $s\geq \frac 16$ for the cubic NLS and (ii) $s > \frac 14$ for the mKdV.
Our normal form approach also allows
us to construct weak solutions
to the cubic NLS in $H^s(\R)$, $0 \leq s < \frac 16$, 
and distributional solutions to the mKdV
in $H^\frac{1}{4}(\R)$
(with some uniqueness statements).

\end{abstract}

\begin{otherlanguage}{french}
\begin{abstract}

Dans cet article, nous revisitons
le sch\'ema d'it\'eration infinie des r\'eductions de  forme normale,
introduit par les premier et deuxi\`eme auteurs (avec Z.\,Guo), 
dans la construction des solutions des EDP dispersives non lin\'eaires.
Notre objectif principal est de pr\'esenter une approche simplifi\'ee \`a cette m\'ethode.
Plus pr\'ecis\'ement,
nous \'etudions les r\'eductions de  forme normale dans un cadre abstrait
et nous r\'eduisons les estimations multilin\'eaires de degr\'es arbitraires
aux applications successives des estimations trilin\'eaires 
fondamentales.
Comme application,
nous montrons 
que 
des \'equations 
dispersives non lin\'eaires canoniques 
sont inconditionnellement bien-pos\'ees
sur la droite r\'eelle.
En particulier, nous impl\'ementons cette approche simplifi\'ee 
\`a l'it\'eration infinie des r\'eductions de  forme normale
dans le contexte de l'\'equation de Schr\"odinger non lin\'eaire cubique (NLS)
et de l'\'equation de KdV modifi\'ee (mKdV) sur la droite r\'eelle
et nous prouvons qu'elles sont inconditionnellement bien pos\'ees dans $H^s(\R)$
avec  (i) $s\geq \frac 16$ dans le cas pour NLS cubique et (ii) $s > \frac 14$ dans le cas pour mKdV.
Notre approche de  forme normale nous permet \'egalement de construire
solutions faibles au NLS cubique dans $H^s(\R)$, $0 \leq s < \frac 16$, 
et solutions de distribution au
mKdV dans 
$H^\frac{1}{4}(\R)$
(avec certaine forme d'unicit\'e).

\end{abstract}
\end{otherlanguage}
%

%\date{\today}

\tableofcontents

\section{Introduction}

\subsection{Main results}

In this paper, we study the Cauchy problem for some canonical nonlinear dispersive equations on the real line. 
More specifically, 
we consider the following cubic nonlinear Schr\"odinger equation (NLS):
\begin{equation}\label{NLS}
\begin{cases}
i \dt u=\dx^2 u\pm |u|^2 u\\
u|_{t = 0} = u_0 \in H^s(\R), 
\end{cases}
\qquad (x,t)\in\R\times\R, 
\end{equation}

\noi
and 
the modified KdV equation (mKdV):
\begin{equation}\label{mKdV}
\begin{cases}
\dt u =\dx^3 u  \pm \dx (u^3)\\
u|_{t = 0} = u_0  \in H^s(\R), 
\end{cases}	
 \qquad (x,t)\in\R\times\R, 
\end{equation}

\noi	
where a solution $u$  is complex-valued in  \eqref{NLS}
and is real-valued in  \eqref{mKdV}.

The Cauchy problems \eqref{NLS} and \eqref{mKdV}
have been studied extensively by many mathematicians.
In particular, multilinear harmonic analysis played an important role
in establishing  well-posedness
of these equations in low regularities. 
Moreover, these equations are known to be 
one of the simplest examples of completely integrable equations
\cite{M1, M2, W, ZS}
and such integrable structures play an important role
in establishing a priori bounds for these equations in the low regularity setting
\cite{KT3, KVZ}.
In the following, however, we will not focus on  such an integrable structure
in an explicit manner.
Our main goal in this paper
is to introduce
a new methodology to establish well-posedness
of \eqref{NLS} and \eqref{mKdV}
(with stronger  uniqueness)
{\it without} relying on heavy harmonic analysis or complete integrability.

Let us briefly go over the well-posedness theory of \eqref{NLS}
and \eqref{mKdV}.
We say that the Cauchy problem \eqref{NLS}
or \eqref{mKdV} is locally well-posed in $H^s(\R)$
if given $u_0 \in H^s(\R)$, there exists
 a unique solution $u \in C([-T, T]; H^s(\R))$
to the equation
for some $T = T(u_0) > 0$. 
Moreover, we impose that the solution map:
$u_0 \in H^s(\R) \mapsto u \in C([-T, T] ; H^s(\R))$
be continuous.
If we can take $T > 0$ to be arbitrarily large, 
we say that the Cauchy problem is globally well-posed.
As we see below, one often needs to employ an auxiliary function space $X_T$
to establish well-posedness.
As a result,  we have uniqueness of solutions
only in $C([-T, T] ; H^s(\R)) \cap X_T$.
In this case, we say that uniqueness holds  conditionally.
If, instead,  uniqueness holds in the entire
$C([-T, T] ; H^s(\R))$, 
then we say that 
the Cauchy problem is {\it unconditionally} (locally) well-posed 
in $H^s(\R)$.
See \cite{Kato}.
Unconditional uniqueness is a notion of uniqueness which does not depend
on how solutions are constructed.
In the following, we summarize the known analytical results
on the well-posedness 
of  \eqref{NLS} and~\eqref{mKdV}.

A basic strategy for proving local well-posedness of \eqref{NLS} or \eqref{mKdV}
is   to write the equation in the Duhamel formulation:\footnote{Here, we only write 
the Duhamel formulation of the cubic NLS \eqref{NLS}.}
\[ u(t) = e^{-it \dx^2} u_0 \mp i \int_0^t  e^{-i(t - t') \dx^2} |u|^2 u (t') dt'\]

\noi
and solve the corresponding fixed point problem.
When $s >  \frac12$, 
Sobolev's embedding theorem
allows us to prove local well-posedness of the cubic NLS \eqref{NLS}
in $H^s(\R)$ via the contraction mapping principle.
In \cite{Tsu}, Tsutsumi used the Strichartz estimates
and proved local well-posedness of \eqref{NLS} in $L^2(\R)$, 
which immediately implied global well-posedness 
in $L^2(\R)$ thanks to the $L^2$-conservation.
Note that the uniqueness holds conditionally  in~\cite{Tsu}
due to the use of the Strichartz spaces.
By refining the analysis, Kato \cite{Kato} proved unconditional
well-posedness of \eqref{NLS} in $H^s(\R)$, $s \geq \frac 16$.

Another approach, inherited from quasilinear hyperbolic problems, 
relies on the energy estimates.
In \cite{KatoKdV}, Kato studied the mKdV \eqref{mKdV} from a viewpoint of a hyperbolic equation
and proved its local well-posedness 
in $H^s(\R)$, $s > \frac 32$.
In Kato's proof,  the dispersive part $\dx^3$ did not play any role.
In \cite{KPV93}, 
Kenig-Ponce-Vega 
exploited the dispersive nature of the equation
in the form of local smoothing and maximal function estimates
and proved local well-posedness of \eqref{mKdV} in $H^s(\R)$, $s \geq \frac 14$,
via the contraction mapping principle.
See also~\cite{Tao} for another proof of the local well-posedness in $H^\frac{1}{4}(\R)$,
utilizing the Fourier restriction norm method (i.e.~the $X^{s, b}$-spaces).
By combining the $X^{s, b}$-spaces with weights 
and the $I$-method, Kishimoto \cite{Kishimoto0}
then proved  global well-posedness of \eqref{mKdV} in $H^\frac{1}{4}(\R)$.\footnote{In \cite{Kishimoto0}, Kishimoto first proved an endpoint local well-posedness
of the KdV equation in $H^{-\frac{3}{4}}(\R)$
and then combined it  with the $I$-method and the Miura transform to establish
global well-posedness of the mKdV~\eqref{mKdV} in 
$H^\frac{1}{4}(\R)$.
In \cite{Guo}, 
Guo independently proved  local well-posedness of the KdV equation in $H^{-\frac{3}{4}}(\R)$.
His argument, however, does not seem to lead to the claimed global well-posedness
as it is presented in \cite{Guo} due to the use of the function spaces
non-compatible with the $I$-method.}
Note that uniqueness in \cite{KPV93, Tao, Guo, Kishimoto0} holds 
only conditionally.
More recently,  by combining the Fourier restriction norm method
and the energy method, 
Molinet-Pilod-Vento \cite{MPV} proved
unconditional well-posedness of \eqref{mKdV}
in $H^s(\R)$, $s> \frac 13$.

In the following, we present a new method for proving well-posedness of 
\eqref{NLS} and \eqref{mKdV} on the real line.
More precisely,  we apply normal form reductions
to the equation infinitely many times
and transform it into an new equation.
While this new equation involves nonlinear terms of arbitrarily high degrees, 
it turns out that these nonlinear terms can be estimated
in a rather straightforward manner 
by successive applications of a basic trilinear estimate
(called a localized modulation estimate)
{\it without} using any auxiliary function spaces
such as the Strichartz spaces and the $X^{s, b}$-spaces.
As a result, we obtain the following unconditional well-posedness
of \eqref{NLS} and \eqref{mKdV}.

\begin{theorem}\label{THM:NLS}
Let $s\geq\frac{1}{6}$.
Then,  the cubic NLS \eqref{NLS} is unconditionally globally  well-posed in $H^s(\R)$.

\end{theorem}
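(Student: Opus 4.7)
The plan is to carry out the infinite iteration of normal form reductions entirely inside $C_t H^s_x$, without resorting to any auxiliary function space such as $X^{s,b}$ or the Strichartz spaces. Passing to the interaction representation by setting $v(t) = e^{it\partial_x^2} u(t)$, the equation on the Fourier side reads
\[ \partial_t \widehat{v}(\xi, t) = \mp i \int_{\xi = \xi_1 - \xi_2 + \xi_3} e^{-it\Phi(\vec{\xi})}\, \widehat{v}(\xi_1)\, \overline{\widehat{v}}(\xi_2)\, \widehat{v}(\xi_3) \, d\xi_1 d\xi_2, \]
with modulation $\Phi(\vec{\xi}) = \xi^2 - \xi_1^2 + \xi_2^2 - \xi_3^2 = 2(\xi - \xi_1)(\xi - \xi_3)$ on the hyperplane $\xi_1 - \xi_2 + \xi_3 = \xi$. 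The oscillatory phase $e^{-it\Phi}$ is the mechanism that will be exploited to trade oscillation for smoothing.

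The next step is the normal form reduction itself. For a parameter $N_1 \gg 1$, decompose the trilinear form into a \emph{near-resonant} piece $\{|\Phi| \lesssim N_1\}$ and a \emph{non-resonant} piece $\{|\Phi| \gtrsim N_1\}$. On the non-resonant piece, write $e^{-it\Phi} = \partial_t(e^{-it\Phi}/(-i\Phi))$ and integrate by parts in time. This produces (a) boundary terms at $0$ and $t$ carrying a gain of $|\Phi|^{-1}$, and (b) a term in which the time derivative lands on one of the $\widehat{v}$ factors; substituting the equation back in converts this into a quintic expression with a new oscillatory phase. Apply the same decomposition and integration by parts to this quintic term, and iterate. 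The outcome is a formal representation of $v(t) - v(0)$ as an infinite sum indexed by ordered ternary trees, where each internal node of depth $j$ carries a resonance threshold $N_j$, a $|\Phi|^{-1}$ factor (from the non-resonant branch) or a near-resonant restriction, and an additional time integration if it is not a leaf.

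The analytic heart of the argument is a \emph{localized modulation trilinear estimate} of the schematic form
\[ \Bigl\| \int_{\xi = \xi_1 - \xi_2 + \xi_3,\ |\Phi| \sim M} \widehat{f_1}(\xi_1) \overline{\widehat{f_2}}(\xi_2) \widehat{f_3}(\xi_3) \, d\xi_1 d\xi_2 \Bigr\|_{H^s_\xi} \lesssim M^{\alpha(s)} \prod_{j=1}^3 \|f_j\|_{H^s}, \]
valid for $s \geq \tfrac{1}{6}$ with $\alpha(s)$ small enough that, combined with the $|\Phi|^{-1}$ gain from each integration by parts, the dyadic series in $M$ is summable; the threshold $\tfrac{1}{6}$ is precisely Kato's barrier for this trade-off. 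Applied inductively at each node of a tree of depth $k$, this estimate bounds the corresponding $(2k+1)$-linear expression by $\|v\|_{H^s}^{2k+1}$ times a product of geometric factors in the $N_j$. The number of trees of depth $k$ grows at most like $C^k$ times a Catalan factor, and choosing $N_j$ to grow sufficiently fast in $j$ absorbs this combinatorial count against the product of $M^{\alpha(s)-1}$ gains. Summing over $k$ and using the factors of $t$ picked up from each time integration yields an a priori bound $\|v(t)\|_{H^s} \leq C(\|u_0\|_{H^s})$ on a time interval depending only on $\|u_0\|_{H^s}$; an analogous difference estimate gives uniqueness. Since every step takes place in $C([-T,T]; H^s)$, unconditional local well-posedness is immediate.

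To globalize, combine unconditional local well-posedness in $H^s$ with Tsutsumi's $L^2$-global well-posedness via $L^2$-conservation: the unconditional uniqueness identifies the $H^s$-solution with Tsutsumi's $L^2$-solution, so the $H^s$-solution exists for all times, and a persistence-of-regularity bootstrap (running the same normal form expansion on the higher-regularity norm) shows that $\|u(t)\|_{H^s}$ cannot blow up in finite time. The main obstacle I foresee is the combinatorial bookkeeping: one must organize the infinite collection of trees so that the $|\Phi|^{-1}$ gains from successive integrations by parts genuinely beat both the exponential growth in the number of tree topologies and the positive powers of $N_j$ coming from the near-resonant pieces. Setting this up in an abstract framework that transparently handles mKdV as well is the structural contribution advertised in the abstract.
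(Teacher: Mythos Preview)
Your overall architecture matches the paper's: interaction representation, infinite iteration of normal form reductions indexed by ordered ternary trees, and a basic trilinear localized modulation estimate applied iteratively at each node. The globalization via Tsutsumi's $L^2$ result is also correct.

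However, you have misplaced the role of the threshold $s\ge\tfrac16$, and this is not cosmetic. In the paper, the localized modulation estimate (Lemma~\ref{LEM:NLS1}) reads
\[
\|\N^\al_{\le M}(v_1,v_2,v_3)\|_{H^s}\lesssim \jb{\al}^{0+}M^{\frac12+}\prod_j\|v_j\|_{H^s}
\]
and is valid for every $s\ge 0$; combined with the $|\Phi|^{-1}$ gain this already gives summable decay $M^{-\frac12+}$ with no regularity restriction whatsoever. Consequently the entire tower of multilinear estimates (Lemmas~\ref{LEM:jg0}, \ref{LEM:jg}) and the unconditional well-posedness of the normal form equation (Theorem~\ref{THM:NF}) hold for $s\ge 0$. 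The regularity $s\ge\tfrac16$ enters only when one must \emph{justify} that a given $C_tH^s_x$ solution of the original NLS actually satisfies the normal form equation: one needs $\ft v(\xi,\cdot)\in C^1_t$ for each fixed $\xi$, one needs to switch $\partial_t$ with frequency integrals, and one needs the remainder $\N_2^{(J+1)}(v)(\xi)\to 0$. These are carried out in a \emph{weak} topology ($\F L^\infty$, for fixed $\xi$) using $\|\dt v\|_{\F L^\infty}\le\|u\|_{L^3}^3\lesssim\|v\|_{H^{1/6}}^3$ via $H^{1/6}\hookrightarrow L^3$. Your proposal collapses these two roles into one, attributing the $\tfrac16$ to the $H^s$ trilinear estimate (``Kato's barrier for this trade-off''), which is not where it lives. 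This two-topology structure is precisely what allows the paper to also construct weak solutions for $0\le s<\tfrac16$ (Theorem~\ref{THM:NLS2}).

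A second, smaller point: the combinatorial control does not come from letting the thresholds $N_j$ grow freely. There is a single parameter $N$ at the first step; thereafter the cutoff at generation $j$ is $(2j+3)^3 M_{j-1}^{1-\delta}$, and it is the cube $(2j+1)^3$ that produces factorial decay $\prod_{j=2}^J(2j+1)^{-3/2+}$ in the multilinear bounds, beating the factorial growth $c_J=\prod_{j=1}^J(2j-1)$ of the number of ordered trees. Your ``choose $N_j$ large'' heuristic would not by itself defeat the $c_J$ factor.
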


\begin{theorem}\label{THM:mKdV}
Let $s >\frac{1}{4}$.
Then,  the mKdV  \eqref{mKdV} is unconditionally globally  well-posed in $H^s(\R)$.
\end{theorem}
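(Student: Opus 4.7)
The plan is to mirror the infinite iteration of normal form reductions developed earlier in the paper for the cubic NLS, now adapted to the derivative nonlinearity of the mKdV \eqref{mKdV}. First, I pass to the interaction representation $v(t) := e^{-t\dx^3} u(t)$, whose spatial Fourier transform satisfies
\[
\dt \widehat v(\xi,t) = \pm i\xi\int_{\xi = \xi_1+\xi_2+\xi_3} e^{it \Phi(\xi_1,\xi_2,\xi_3)}\, \widehat v(\xi_1,t)\widehat v(\xi_2,t)\widehat v(\xi_3,t)\, d\xi_1 d\xi_2,
\]
with modulation $\Phi(\xi_1,\xi_2,\xi_3) = 3(\xi_1+\xi_2)(\xi_2+\xi_3)(\xi_1+\xi_3)$. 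The reality of $u$ forces the genuinely resonant part $\{\Phi = 0\}$ to reduce to the three hyperplanes $\xi_1 + \xi_2 = 0$, $\xi_1 + \xi_3 = 0$, $\xi_2 + \xi_3 = 0$, on each of which a pair of $\widehat v$ factors combines into an $L^2$-norm-like contribution that is either conserved in time (a ``trivial'' resonance that is handled by a gauge transform of phase $e^{\mp 3it\dx(\|u\|_{L^2}^2 u)}$-type, or more precisely by subtracting the diagonal) or integrable in time by $L^2$-conservation.

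Second, after removing the trivial resonances, I decompose the remaining trilinear form according to the size of $|\Phi|$ and the largest frequency $N_{\max}$. On the non-resonant region $\{|\Phi| \gtrsim M\}$ I perform the first normal form reduction (integration by parts in $t$), obtaining a boundary term (bounded, using that dividing by $\Phi$ gains three derivatives distributed among the three inputs, by $T \cdot \|u\|_{C_T H^s}^3$ provided $s > \frac14$) together with a quintic term in which $\dt v$ has been replaced by the original cubic nonlinearity $\pm \dx(u^3)$. On the residual region $\{|\Phi|\les M\}$, the basic trilinear localized modulation estimate (the abstract building block established in the earlier sections) provides a bound by $M^{-\delta}T\|u\|_{C_T H^s}^3$ for some $\delta = \delta(s) > 0$ when $s > \tfrac14$, and balancing $M$ with $N_{\max}$ closes the estimate with room to spare.

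Third, I iterate: the quintic (and, successively, the $(2J{+}1)$-linear) term is itself decomposed into resonant and non-resonant pieces relative to its own accumulated modulation phase; the non-resonant piece is integrated by parts, producing a boundary term and a $(2J{+}3)$-linear term. Using the abstract tree-indexed bookkeeping developed earlier, I reduce the $(2J{+}1)$-linear bound to $J$ successive applications of the basic trilinear localized modulation estimate. The combinatorial count of trees at level $J$ is Catalan-like $C^J$, so I need each application to contribute a factor small enough to overcome this growth; this is precisely where the strict inequality $s > \tfrac14$ enters and gives a geometric series $\sum_{J} (CT)^J$ that converges for small $T$. Summing yields a normal-form-reduced equation
\[
u(t) = e^{t\dx^3} u_0 + \mathcal N_0(u)(t) + \sum_{J=1}^{\infty} \Big( \mathcal N_J^{\mathrm{bdry}}(u)(t) + \mathcal N_J^{\mathrm{res}}(u)(t) \Big),
\]
each of whose terms is controlled in $C_T H^s$ by the trilinear estimate alone, \emph{without} any auxiliary space.

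Fourth, local existence and \emph{unconditional} uniqueness follow by a contraction argument for this normal-form equation purely in $C_T H^s$, together with the fact (which I verify by an approximation/cutoff argument on frequencies followed by passage to the limit) that every $u \in C_T H^s$ distributional solution of \eqref{mKdV} with $s > \tfrac14$ automatically satisfies the normal-form equation. Continuous dependence is obtained from the same Lipschitz multilinear bounds applied to differences. Finally, global well-posedness is a standard consequence of the $L^2$-conservation for mKdV combined with persistence of regularity from the $H^s$ local theory. The main obstacle is the third step: controlling the infinite sum of multilinear forms via only trilinear estimates requires that the combinatorial tree count be beaten by the modulation-gain at each node, and this balance is tight precisely at $s = \tfrac14$, which explains the strict inequality $s > \tfrac14$ in the statement.
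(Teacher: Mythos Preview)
Your overall architecture (infinite normal form iteration, reduction to a trilinear building block, contraction for the normal form equation in $C_TH^s$) matches the paper, but two substantive points are off.

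First, the resonance/gauge discussion is misplaced. On $\R$ the resonant set $\{\Psi = 0\}$ has Lebesgue measure zero, and the paper makes no gauge transform, no subtraction of diagonals, and no separate treatment of the hyperplanes $\xi_i+\xi_j=0$. The nearly resonant part $\{|\Psi|\le N\}$ is estimated directly by the localized modulation estimate (Lemma~\ref{LEM:KdV1}), which already holds at $s=\tfrac14$. Your proposed gauge step is neither needed nor used.

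Second, and more importantly, you misattribute the origin of the strict inequality $s>\tfrac14$. In the paper the $H^s$ multilinear estimates that drive the contraction (Lemmas~\ref{LEM:jg0} and~\ref{LEM:jg}, built on Lemmas~\ref{LEM:KdV1} and~\ref{LEM:KdV2}) hold for all $s\ge\tfrac14$, including the endpoint; the combinatorial tree count is beaten by the factorial decay $\prod_j (2j+1)^{-3/2+}$ already there. The restriction $s>\tfrac14$ enters only in Section~\ref{SEC:4}, in the \emph{justification} that any $C_TH^s$ solution actually satisfies the normal form equation. The issue is the derivative loss: at each differentiation-by-parts step, $\dt v$ is replaced by the nonlinearity, which carries a full factor of $\xi$, and the analogue of \eqref{NZ0a} for mKdV fails in $\F L^\infty$. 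The paper resolves this by writing $\xi = \text{sgn}(\xi)\,|\xi|^{3/4}\cdot|\xi|^{1/4}$, absorbing $|\xi|^{1/4}$ in the last generation via the trilinear estimate~\eqref{Extra}, and \emph{shifting the remaining $|\xi|^{3/4}$ up one generation} along the path $P(r^{(1)},p^{(J)})$; see \eqref{m4} and the modified operators $\N^\al_{j,\le M}$, $\I^\al_{j,M}$ in \eqref{mk0}. The resulting weak-norm localized modulation estimate (Lemma~\ref{LEM:mk1}) is where $s>\tfrac14$ is genuinely required (Subcases~3.c, 4.c, 5.a--5.d). Your ``approximation/cutoff'' sketch does not address this derivative-shifting mechanism, and without it the passage from the PDE to the normal form equation does not close for mKdV.

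A minor point: global-in-time does not follow from $L^2$-conservation alone at regularity $\tfrac14<s<1$; the paper invokes the prior conditional global results \cite{CKSTT,Kishimoto0} to obtain an a priori $H^s$ bound and then iterates the local argument.
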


Theorem \ref{THM:mKdV} 
for the mKdV~\eqref{mKdV}
extends the previous unconditional uniqueness result in $H^s(\R)$, $s > \frac 13$, by Molinet-Pilod-Vento 
\cite{MPV} to $s > \frac 14$, thus almost matching the local well-posedness result in $H^\frac{1}{4}(\R)$
\cite{KPV93, Tao}.
On the other hand,  Theorem \ref{THM:NLS} for the cubic NLS~\eqref{NLS} was already proven 
in~\cite{Kato}.
Let us stress, however, that 
the main purpose of this paper  is to introduce a new method 
for constructing solutions to nonlinear dispersive PDEs on the real line
(and on $\R^d$ in general)
via (a simplified approach to) an infinite iteration of normal form reductions,
which can be applied to different classes of dispersive equations.
In our previous work \cite{GKO}, 
we introduced 
an infinite iteration of normal form reductions 
to construct solutions to  nonlinear dispersive PDEs in the periodic setting.
In particular, we proved
 unconditional uniqueness of the cubic NLS on the circle $\T$ 
in $H^s(\T)$, $s\geq \frac 16$.
On the one hand, the present  work can be viewed as an extension of \cite{GKO} to the non-periodic case.
On the other hand,  novelty of this work 
lies in presenting a simplified approach in treating multilinear estimates
appearing in  this normal form approach.

In order to make sense of the cubic nonlinearity in \eqref{NLS} or \eqref{mKdV}
as a distribution, 
we need to have $u \in L^3_\text{loc}(\R)$.
In view of the embedding: $H^\frac{1}{6}(\R)\subset L^3(\R)$, 
we see that $s \geq  \frac 16$ is necessary for proving 
unconditional uniqueness for \eqref{NLS} and \eqref{mKdV}
within the framework of the $L^2$-based Sobolev spaces.
Moreover, it is known that 
the solution map for the mKdV~\eqref{mKdV}:  
$u_0 \in H^s(\R) \mapsto u \in C([-T, T] ; H^s(\R))$
fails to be locally uniformly continuous
for $ s< \frac 14$  \cite{KPV01, CCT}.
Noting that a Picard iteration yields
smoothness of a solution map, 
we see that the regularity restriction $s \geq \frac 14$
is needed to prove local well-posedness of~\eqref{mKdV} via a Picard iteration
(even with conditional uniqueness).
Hence,   Theorems~\ref{THM:NLS} and~\ref{THM:mKdV}
are (almost) sharp in the regime where a Picard iteration is applicable
by some other consideration.
We also point out that well-posedness of \eqref{NLS} for $s < 0$
and (and \eqref{mKdV} for $s < \frac 14$, respectively) is a long-standing open problem.
See \cite{CCT3, KT1, KT2, CHT} for existence results (without uniqueness) below these threshold regularities.

\smallskip

While we need $s \geq \frac 16$ in order to make sense 
of the cubic nonlinearity $\N_{\text{NLS}}(u) := |u|^2u$ as a distribution, 
our normal form argument allows us to establish an a priori
bound on the difference of two (smooth) solutions for the cubic NLS \eqref{NLS} in $L^2(\R)$.
This allows us to establish an existence result of 
certain weak solutions.
Before we state our next result, 
let us  recall the following two notions of weak solutions.

We first recall the notion
of {\it weak solutions in the extended sense}.
See  \cite{Christ2, Christ, GKO}.

\begin{definition}
\label{DEF:sol2}
\rm
Let  $0 \leq s < \frac 16$ and $T>0$. 

\smallskip

\noi
(i) We define a sequence of Fourier cutoff operators to be a sequence of Fourier multiplier operators $\{T_N\}_{N\in \NB}$
on $\mathcal{S}'(\R)$ with multipliers $m_N:\R \to \mathbb{C}$ such that 

\smallskip

\begin{itemize}
\item[$\bullet$]
 $m_N$ has a compact support on $\R$ for each $N \in \NB$, 
 \item[$\bullet$]$m_N$ is uniformly bounded, 

\item[$\bullet$] $m_N$ converges pointwise to $1$, i.e. $\lim_{N\to \infty} m_N(\xi) = 1$ for any $\xi \in \R$.
\end{itemize}

\smallskip

\noi
(ii) 
Let $u \in C([-T, T]; H^s(\R))$. 
We say that $\N_{\text{NLS}}(u)$ exists and is equal to a distribution 
$v \in \mathcal{S}'(\R\times (-T, T))$
if for every sequence $\{T_N\}_{N\in \NB}$ of (spatial) Fourier cutoff operators, we have
\begin{equation*}
\lim_{N\to \infty} \N_{\text{NLS}}(T_N u) = v
\end{equation*}

\noi
in the sense of distributions on $\R\times (-T, T)$.

\smallskip

\noi
(iii) (weak solutions in the extended sense)
We say that $u  \in C([-T, T]; H^s (\R))$  is a weak solution of 
the  cubic NLS \eqref{NLS}
in the extended sense
if 

\smallskip

\begin{itemize}
\item[$\bullet$] $u|_{t=0} = u_0$, 

\item[$\bullet$] the nonlinearity $\N_{\text{NLS}}(u)$ exists in the sense of (ii) above, 

\item[$\bullet$] $u$ satisfies \eqref{NLS}
in the distributional sense  on $\R\times (-T, T)$,
where the nonlinearity $\N_{\text{NLS}}(u) $ is interpreted as above. 

\end{itemize}

\end{definition}

See also \cite{Gub} for a similar notion of weak solutions,
where the nonlinearity is defined as a distributional limit
of smoothed nonlinearities.

Next, we introduce the following notion of {\it sensible weak solutions}.
See  \cite{OW3, OW2, FO}.

\begin{definition}[sensible weak solutions]\label{DEF:sol} \rm
Let $0 \leq s < \frac 16$ and  $T>0$. 
Given $u_0 \in H^s(\R)$, 
we say that
 $u \in C([-T,T]; H^s(\R))$
is a sensible weak solution
to the  cubic NLS \eqref{NLS}  on $[-T, T]$ if,
for any sequence $\{u_{0, m}\}_{m \in \NB}$ of Schwartz   functions
tending to $u_0$ in $H^s(\R)$, 
the corresponding Schwartz  solutions $u_m$ with $u_m|_{t = 0} = u_{0, m}$
converge to $u$ 
in $C([-T,T];  H^s (\R))$.
Moreover, we impose that there exists  a distribution $v$
such that $\N_{\text{NLS}} (u_m)$ converges to  $v$ in the space-time distributional sense,
independent of the choice of the approximating sequence.

\end{definition}

By using the equation, 
the convergence of  $u_m$ to $u$ 
in $C([-T,T]; H^s (\R))$
implies that 
$\N_{\text{NLS}} (u_m)$ converges to some  $v$ in the space-time distributional sense.
Hence, the last part of Definition \ref{DEF:sol}
is not quite necessary.
We, however, keep it for clarity.

Note that  sensible weak solutions are unique by definition.
See \cite{KapT, KV} for analogous notions
of solutions (with uniqueness embedded in the definition).
On the other hand, weak solutions in the extended sense
are not  unique in general.
In fact, Christ \cite{Christ2}
proved non-uniqueness of weak solutions in the extended sense 
for the renormalized cubic NLS on $\T$
in negative Sobolev spaces.
These notions of  weak solutions 
in Definitions \ref{DEF:sol2} and~\ref{DEF:sol}
are  rather weak
and we need to interpret the cubic nonlinearity $\N_{\text{NLS}}(u)$
as a (unique) limit of smoothed nonlinearities $\N_{\text{NLS}}(T_N u)$
or the nonlinearities $\N_{\text{NLS}}(u_m)$ of smooth approximating solutions $u_m$.
This in particular
implies that 
weak solutions 
in the sense of  Definitions \ref{DEF:sol2} or \ref{DEF:sol}
 do not have to satisfy 
the equation even in the distributional sense.

Our normal form approach yields the following result
 without relying on any auxiliary function spaces.

\begin{theorem}\label{THM:NLS2}
Let $0 \leq s < \frac 16$.
Then, the  cubic NLS \eqref{NLS}
 is globally well-posed in $H^s(\R)$

\smallskip
\begin{itemize}
\item[$\bullet$]
in the sense of weak solutions in the extended sense and

\item[$\bullet$]
 in the sense of sensible weak solutions.

\end{itemize}

\smallskip

\end{theorem}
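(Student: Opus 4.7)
The plan is to construct $u$ as the $C([-T, T]; H^s(\R))$-limit of smooth solutions to \eqref{NLS} and then verify that it satisfies each of the two weak-solution definitions. Fix $u_0 \in H^s(\R)$ with $0 \leq s < \frac{1}{6}$, and approximate it by Schwartz data $u_{0, m}$ with $u_{0, m} \to u_0$ in $H^s(\R)$. Classical theory produces global Schwartz solutions $u_m$ of \eqref{NLS} with $u_m|_{t = 0} = u_{0, m}$. The crux is to show that $\{u_m\}$ is Cauchy in $C([-T, T]; H^s(\R))$ for every $T > 0$, with a Cauchy rate that depends only on the $H^s$-convergence of $\{u_{0, m}\}$.

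The engine for this is the same infinite normal form reduction underlying Theorem \ref{THM:NLS}. Applied to a smooth solution, each reduction step rewrites the Duhamel term for the cubic nonlinearity as a boundary contribution (the difference of a multilinear quantity evaluated at times $0$ and $t$) plus a time integral of a higher-degree multilinear expression; iterating produces an identity in which every emerging multilinear term of arbitrary order is controlled by successive applications of the basic localized modulation trilinear estimate, without invoking Strichartz or $X^{s, b}$ norms. Crucially, each step of this reduction gains negative powers of the largest modulation, which I expect to afford enough room so that the entire infinite series closes at the plain $L^2$ level, and hence at any $H^s$ with $s \geq 0$, not only at $s \geq \frac{1}{6}$. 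Applying the analogous reduction to the inhomogeneous trilinear equation satisfied by $v_{m, k} := u_m - u_k$ then yields a Gronwall-type bound of the form $\|v_{m, k}\|_{L^\infty_T H^s} \lesssim \|v_{m, k}(0)\|_{H^s}$, with a constant depending only on $T$ and the $L^2$-norm of the data (exploiting $L^2$-conservation for the smooth $u_m$). This delivers the Cauchy property on every $[-T, T]$, hence a global-in-time limit $u$ independent of the approximating sequence.

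For the sensible weak solution statement, the construction of $u$ is already the convergence required by Definition \ref{DEF:sol}, and the distribution $v$ is the space-time distributional limit of $\N_{\text{NLS}}(u_m) = \mp (i \dt u_m - \dx^2 u_m)$, which exists and is independent of the sequence by the same difference estimate. For the extended-sense statement, given any Fourier cutoff sequence $\{T_N\}$, I would decompose
\[
\N_{\text{NLS}}(T_N u) - \N_{\text{NLS}}(u_m) = \bigl[\N_{\text{NLS}}(T_N u) - \N_{\text{NLS}}(T_N u_m)\bigr] + \bigl[\N_{\text{NLS}}(T_N u_m) - \N_{\text{NLS}}(u_m)\bigr].
\]
The first bracket is controlled in $\mathcal{D}'(\R \times (-T, T))$ uniformly in $N$ via a trilinear splitting on $T_N(u - u_m)$ together with the uniform boundedness of $m_N$; the second tends to $0$ as $N \to \infty$ for each fixed $m$ by smoothness of $u_m$ and pointwise convergence $m_N \to 1$. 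A diagonal argument then yields $\N_{\text{NLS}}(T_N u) \to v$ distributionally, and passing to the limit in the equation for $u_m$ gives the distributional equation for $u$.

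The main obstacle will be ensuring that the normal form multilinear estimates close uniformly in the number of iterations at the $L^2$ endpoint, where the trilinear estimate has noticeably less room than in the $H^\frac{1}{6}$ regime of Theorem \ref{THM:NLS}, and where the cubic nonlinearity itself no longer makes distributional sense, so every multilinear term in the series must be estimated intrinsically through modulation gains without ever recovering $\N_{\text{NLS}}(u)$ pointwise in space-time. A secondary delicate point is justifying that the distributional limits commute with the boundary and integral terms produced by the normal form identity, which reduces to sequential continuity of each multilinear operator in the series from $H^s \times \cdots \times H^s$ into the appropriate distribution space.
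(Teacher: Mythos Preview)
Your overall strategy is correct and essentially matches the paper. The infinite normal form reduction does produce multilinear operators $\N_0^{(j)}$, $\N_1^{(j)}$ whose $H^s$-bounds (Lemmas~\ref{LEM:jg0} and~\ref{LEM:jg}) hold for every $s \geq 0$, not just $s \geq \tfrac{1}{6}$; this yields the Lipschitz difference estimate for smooth solutions and hence the sensible weak solution claim exactly as you describe. One clarification on your ``main obstacle'': the localized modulation estimate (Lemma~\ref{LEM:NLS1}) is not weaker at $s=0$ than at $s=\tfrac{1}{6}$. The threshold $s \geq \tfrac{1}{6}$ enters only in Section~\ref{SEC:4}, where one must justify that a \emph{rough} $C_tH^s$-solution satisfies the normal form equation; here you only apply the reduction to smooth $u_m$, so that issue never arises.

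There is, however, a genuine gap in your extended-sense argument. The first bracket
\[
\N_{\text{NLS}}(T_N u) - \N_{\text{NLS}}(T_N u_m)
\]
cannot be controlled in $\mathcal{D}'$ uniformly in $N$ by a ``trilinear splitting on $T_N(u-u_m)$'' alone: for $s<\tfrac{1}{6}$ the map $w \mapsto |w|^2 w$ is not continuous from $H^s$ into any distribution space in a way that is uniform over Fourier cutoffs $T_N$ (the $H^\sigma$-norms of $T_N u$ blow up as $N\to\infty$ for any $\sigma>s$, and no product estimate closes at $H^s$). The paper's route, following \cite{GKO}, avoids comparing raw cubic nonlinearities. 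Instead, since the limit $u$ satisfies the normal form equation \eqref{NF2} and each $T_N u$ is smooth, one expands $\N_{\text{NLS}}(T_N u)$ via the normal form identity into the series $\sum_j \partial_t \N_0^{(j)}(T_N u) + \sum_j \N_1^{(j)}(T_N u)$. Each multilinear piece $\N_0^{(j)}$, $\N_1^{(j)}$ is continuous on $H^s$ for $s\geq 0$ with summable operator norms (Lemmas~\ref{LEM:jg0}, \ref{LEM:jg}), and $T_N u \to u$ in $C_TH^s$, so the whole series converges in $\mathcal{D}'$ to the corresponding expression in $u$, which is the distribution $v$. The normal form structure is thus used not merely to obtain the difference estimate but also to \emph{define} and identify the limiting nonlinearity.
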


As for the mKdV \eqref{mKdV}, 
our normal form argument provides  an a priori
bound  in $H^\frac{1}{4}(\R)$.
In this regularity, 
the cubic nonlinearity
$ \dx (u^3)$
makes sense as a distribution
and thus  we do not need the notion of weak solutions in the extended sense
in Definition~\ref{DEF:sol2}.
On the other hand, we can define sensible weak solutions
to the mKdV \eqref{mKdV} as in Definition~\ref{DEF:sol}.

\begin{theorem}\label{THM:mKdV2}

The mKdV \eqref{mKdV} is globally well-posed in $H^\frac{1}{4}(\R)$
 in the sense of sensible weak solutions.
These solutions are indeed distributional solutions to \eqref{mKdV}.

\end{theorem}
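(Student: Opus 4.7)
The plan is to construct the sensible weak solution by a smooth approximation argument, leveraging the normal form machinery that powers Theorem \ref{THM:mKdV}. Given $u_0 \in H^\frac{1}{4}(\R)$, take a sequence $\{u_{0,m}\}_{m\in\NB}$ of Schwartz data with $u_{0,m} \to u_0$ in $H^\frac{1}{4}(\R)$. By Kishimoto's global well-posedness result \cite{Kishimoto0}, each $u_{0,m}$ launches a global smooth solution $u_m$ to \eqref{mKdV}, and on any fixed $[-T,T]$ these are uniformly bounded: $M_T := \sup_m \|u_m\|_{L^\infty_T H^\frac{1}{4}} < \infty$. The principal task is then to show that $\{u_m\}$ is Cauchy in $C([-T,T]; H^\frac{1}{4}(\R))$.

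To this end, I would write the equation for the difference $w = w_{m,n} := u_m - u_n$,
\[ \dt w = \dx^3 w \pm \dx\bigl((u_m^2 + u_m u_n + u_n^2)\, w\bigr), \]
and run the infinite iteration of normal form reductions from the proof of Theorem \ref{THM:mKdV} on this equation, treating $w$ as the unknown and $u_m, u_n$ as coefficients. Successive applications of the localized modulation trilinear estimate, summed over the resulting tower, should yield an a priori bound
\[ \sup_{|t|\le T} \|w_{m,n}(t)\|_{H^\frac{1}{4}}^2 \le \|u_{0,m} - u_{0,n}\|_{H^\frac{1}{4}}^2 + C(T, M_T)\,\|w_{m,n}\|_{L^\infty_T H^\frac{1}{4}}^{2+\delta} \]
for some $\delta > 0$. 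A standard bootstrap then upgrades this to a Lipschitz-type bound that forces $\{u_m\}$ to be Cauchy, and the same estimate applied across two different approximating sequences gives uniqueness of the limit $u$, completing the verification of Definition \ref{DEF:sol}. The Sobolev embedding $H^\frac{1}{4}(\R) \hookrightarrow L^3(\R)$ produces convergence $u_m^3 \to u^3$ in $C([-T,T]; L^1_{\text{loc}}(\R))$, so passing to the distributional limit in the equation satisfied by each $u_m$ shows that $u$ is a genuine distributional solution of \eqref{mKdV}. Globality follows since $T$ is arbitrary.

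The main obstacle will be the critical nature of the regularity $s = \tfrac14$. In the proof of Theorem \ref{THM:mKdV} for $s > \tfrac14$, the strict gap $s - \tfrac14 > 0$ provides smallness in the localized modulation estimate and makes the normal-form tower summable with a geometric factor at each level. At $s = \tfrac14$ this slack disappears, so one must instead extract a replacement smallness from high-modulation smoothing at each step of the iteration, carefully bookkeeping how modulations are distributed across the tower and perhaps grouping terms by resonance type. This endpoint obstruction is exactly why Theorem \ref{THM:mKdV2} yields only a sensible weak solution rather than unconditional uniqueness in the full $C_T H^\frac{1}{4}$ sense, and it will dictate the organization of the entire argument.
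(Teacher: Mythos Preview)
Your overall architecture---approximate by smooth data, use global well-posedness to get smooth solutions, then extract a difference estimate via the normal form machinery---is exactly the paper's route. But you have misdiagnosed the ``main obstacle,'' and this matters because the extra work you anticipate is unnecessary and the real reason for the gap between Theorems~\ref{THM:mKdV} and~\ref{THM:mKdV2} is different from what you describe.

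The localized modulation estimate for the mKdV (Lemma~\ref{LEM:KdV1}) holds for $s\geq\tfrac14$, \emph{including the endpoint}. Consequently, all the multilinear bounds on $\N_0^{(j)}$ and $\N_1^{(j)}$ (Lemmas~\ref{LEM:jg0} and~\ref{LEM:jg}) and the resulting well-posedness of the normal form equation (Theorem~\ref{THM:NF}) are valid at $s=\tfrac14$. The summability of the tower comes from the modulation restrictions and the factorial decay in \eqref{jg03}, not from any regularity gap $s-\tfrac14>0$. So there is no ``replacement smallness from high-modulation smoothing'' to hunt for: the difference estimate \eqref{NF5} is available at $s=\tfrac14$ directly, and it is linear in $\|u_m-u_n\|_{C_TH^{1/4}}$ (no bootstrap with a superlinear term $\|w\|^{2+\delta}$ is needed---one simply chooses $N$ large and $T$ small to make the normal-form map a contraction).

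The restriction $s>\tfrac14$ in Theorem~\ref{THM:mKdV} arises only in Section~\ref{SEC:4}, in the \emph{weak-norm} estimates (Lemma~\ref{LEM:mk1}) used to justify the formal manipulations (product rule, interchange of $\partial_t$ and frequency integrals, vanishing of the remainder) for a \emph{generic} solution $u\in C_TH^s$. For the smooth approximants $u_m$ in your argument this justification is automatic, so the $s>\tfrac14$ barrier never appears. That is precisely why Theorem~\ref{THM:mKdV2} delivers sensible weak solutions but not unconditional uniqueness: the a~priori estimates close at $s=\tfrac14$, but one cannot show that an \emph{arbitrary} $C_TH^{1/4}$-solution satisfies the normal form equation.
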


Note that  solutions constructed in Theorems \ref{THM:NLS2} and \ref{THM:mKdV2}
 agree with those
from the previous well-posedness results in \cite{Tsu, KPV93, Tao}.
This easily follows from the unconditional uniqueness in higher regularities
(for example, in Theorems \ref{THM:NLS} and \ref{mKdV})
and the conditional well-posedness results 
in low regularities \cite{Tsu, KPV93, Tao}, 
which provides uniqueness as a limit of classical solutions.
We point out, however, that  the importance of Theorems \ref{THM:NLS2}
and \ref{THM:mKdV2} does not lie in their statements
but in the method of the construction of solutions.
Our normal form approach
transforms the equations
\eqref{NLS} and \eqref{mKdV}
to the normal form equations (see \eqref{I5} and \eqref{NF2}), 
at least for smooth solutions
belonging to $H^s(\R)$
with $s \geq \frac 16$ 
for the cubic NLS and $s > \frac 14$ for the mKdV.
We then prove unconditional global well-posedness
of the normal form equations
in $H^s(\R)$
with the regularities specified in 
Theorems~\ref{THM:NLS2} and~\ref{THM:mKdV2},
i.e.~$s \geq 0$ 
for the cubic NLS and $s \geq \frac 14$ for the mKdV.
See Theorem~\ref{THM:NF} below.
Then, 
Theorems~\ref{THM:NLS2} and~\ref{THM:mKdV2}
follow as corollaries to 
this unconditional well-posedness 
on the normal form equation.

\medskip

Lastly, note that while Theorems \ref{THM:NLS},  \ref{THM:mKdV},  \ref{THM:NLS2}, and \ref{THM:mKdV2}
claim global-in-time results, 
it suffices to prove these theorems only locally in time 
thanks to  the (conditional) global well-posedness 
\cite{Tsu, CKSTT, Kishimoto0}.
More precisely, in the following, we perform local-in-time construction 
of solutions on a time interval of length $T = T(\|u_0\|_{H^s})>0$
with $s \geq 0$ for the cubic NLS \eqref{NLS}
and $s\geq \frac 14$ for the mKdV \eqref{mKdV}.
Noting that the global well-posedness results in 
\cite{Tsu, CKSTT, Kishimoto0} provide an a priori estimate 
of the form: $\sup_{t \in [-T, T]} \| u(t) \|_{H^s} \les C(\|u_0\|_{H^s}, T)$
for any $T > 0$, 
we can simply iterate the local-in-time argument 
to prove  Theorems \ref{THM:NLS}, \ref{THM:mKdV}, 
 \ref{THM:NLS2}, and \ref{THM:mKdV2}.
Since our analysis is of local-in-time nature, 
the focusing/defocusing nature of the equations does not play any role.
Hence, we assume that the equations are defocusing,
i.e.~ with the $-$ signs in \eqref{NLS} and \eqref{mKdV}.

\begin{remark}\rm

In \cite{OW}, Y.~Wang and the second author
introduced the notion of 
  enhanced uniqueness, 
  which is uniqueness among all solutions (with the same initial data)
  equipped with {\it some} smooth approximating solutions.
They used an  infinite iteration 
of normal form reductions for the fourth order cubic NLS (4NLS)
in negative Sobolev spaces
and proved such enhanced uniqueness.
This notion of enhanced uniqueness allows us to compare
 solutions  belonging to various  auxiliary functions spaces
(so that the cubic nonlinearity makes sense in some appropriate manner).
On the one hand, this notion was useful
in~\cite{OW} since there was no known (conditional) well-posedness
for 4NLS in negative Sobolev spaces at that time.
We point out, however, that such notion becomes useless
once we have (i)~conditional well-posedness in relevant low regularity
and (ii)~unconditional well-posedness in high regularities.
In such a case, this notion of enhanced uniqueness
coincides with uniqueness as a limit of classical solutions.
This is precisely the situation for the cubic NLS and the mKdV under consideration.

\end{remark}

%%%%%%%%%%%%%%%%%%%%%%%%
%%%%%%%%%%%%%%%%%%%%%%%%
%%%%%%%%%%%%%%%%%%%%%%%%
%%%%%%%%%%%%%%%%%%%%%%%%
%%%%%%%%%%%%%%%%%%%%%%%%

\subsection{Normal form approach}

In this subsection, we briefly explain our strategy  for proving Theorems 
\ref{THM:NLS},  \ref{THM:mKdV}, \ref{THM:NLS2},  and \ref{THM:mKdV2}.
As mentioned above, our main tool is the normal form method.
In particular, we apply normal form reductions to \eqref{NLS} and \eqref{mKdV}
infinitely many times to transform them into new  equations.
These new equations 
involve infinite series of nonlinearities of arbitrarily high degrees
and thus are more complicated algebraically than the original equations.
As we see later, however 
they are easier to handle analytically.
Namely, we renormalize the equations
into analytically simpler equations
at the expense of algebraic
and notational complexity.

In the following, we consider  the cubic NLS \eqref{NLS} as an example.
Letting $v(t) = e^{i t \dx^2} u(t) $ denote the interaction representation of $u$, 
we can rewrite the equation \eqref{NLS} as\footnote{For simplicity of the exposition, 
we drop the complex conjugate sign on $\ft v(\xi_2)$.}  
\begin{align}
\partial_t  v=  \N(v)
&: =   \F^{-1}\Bigg\{i 
\intt_{\xi = \xi_1-\xi_2+\xi_3}e^{-i\Phi(\bar{\xi})t} \prod_{j = 1}^3 \ft v(\xi_j, t)d\xi_1d\xi_2
\Bigg\},
 \label{I1}
\end{align}

\noi where the modulation function\footnote{In \cite{Tao}, this phase function is referred to as
a resonance function. For our analysis, resonance does not play any important role.
Instead, modulation (as in the Fourier restriction norm method) plays
an important role.
For this reason, we refer to $\Phi(\bar \xi)$ as a modulation function.}
 $\Phi(\bar{\xi})$ is defined by
\begin{align}
\Phi(\bar{\xi})
 =\Phi(\xi,\xi_1,\xi_2,\xi_3)
& =\xi^2-\xi_1^2+\xi_2^2-\xi_3^2\notag \\
& =2(\xi_2-\xi_1)(\xi_2-\xi_3)=2(\xi-\xi_1)(\xi-\xi_3).
\label{Phi1}
\end{align}

\noi
Note that the last two equalities hold under the condition $\xi=\xi_1-\xi_2+\xi_3$.
We point out that 
it is natural to consider the equation in terms of the interaction representation
if we want to exploit the oscillatory factor $e^{-i\Phi(\bar{\xi})t}$ in \eqref{I1}.
Such a formulation in terms of the interaction representation is classical 
and already appears in the work of
Kato \cite{KatoKdV} in the context of the (generalized) KdV equation.
By integrating \eqref{I1} in time, we obtain 
\begin{align}
v(t) =  u_0 + \int_0^t \N(v)(t') dt'.
\label{I2}
\end{align}

On the one hand, 
when $ s > \frac 12$, we can easily estimate \eqref{I2}
by the algebra property of $H^s(\R)$.
On the other hand, when $s \leq \frac 12$, we must exploit
the dispersion, namely, the oscillation coming from the 
oscillatory factor $e^{-i\Phi(\bar{\xi})t}$ in \eqref{I1}.
This is often manifested  in the form of the Strichartz estimates
and/or the Fourier restriction norm method.
In the following, we simply rely on integration by parts.
By taking the spatial Fourier transform of \eqref{I2}
and (formally) integrating by parts,\footnote{In fact, 
this integration by parts basically corresponds to the (Poincar\'e-Dulac) normal form reduction.
See the introduction of \cite{GKO} by the first two authors (with Z.\,Guo), relating the integration-by-parts 
(or differentiation-by-parts)
procedure with the normal form reductions.
See Arnold \cite{A} for a general discussion of the Poincar\'e-Dulac normal form reductions in the finite dimensional setting.} we have
\begin{align}
 \ft  v(\xi) = \ft u_0(\xi)
& -  \intt_{\xi = \xi_1-\xi_2+\xi_3}\frac{e^{-i\Phi(\bar{\xi})t'}}{\Phi(\bar \xi)}
\prod_{j = 1}^3 \ft v(\xi_j, t')d\xi_1d\xi_2\bigg|_{t' = 0}^t  \notag\\
& + \int_0^t 
\intt_{\xi = \xi_1-\xi_2+\xi_3}
\frac{e^{-i\Phi(\bar{\xi})t'}}{\Phi(\bar \xi)}\dt \bigg(\prod_{j = 1}^3 \ft v(\xi_j, t')\bigg)d\xi_1d\xi_2dt' .
 \label{I3}
\end{align}

\noi
Note that we have gained a full power of the modulation thanks to  $\Phi(\bar \xi)$ 
in the denominator.
Compare this with the usual application of 
 the Fourier restriction norm method
where one only gains  $\sim \frac 12$-power of the modulation.

At this point, there are several issues in \eqref{I3}.
First, note that the modulation function $\Phi(\bar \xi )$ appearing in the denominator may be 0.
This corresponds to the so-called {\it resonance}.
Even if $\Phi(\bar \xi ) \ne 0$, integration by parts 
does not seem to help if $\Phi(\bar \xi )$ is small, corresponding to the {\it nearly resonant} case.
In order to resolve this issue, 
we separately estimate the contributions 
from (i) nearly resonant case: $|\Phi(\bar \xi )| \leq N$
and (ii) (highly) non-resonant case: $|\Phi(\bar \xi )| > N$
for some parameter $N = N(\|u_0\|_{H^s}) > 1$.
In particular, we perform integration by parts only in the non-resonant case (ii).
Thanks to the restriction on the modulation, 
we can estimate the contribution from the nearly resonant case (i) in $C_t H^s_x$, $s \geq 0$
(and $s \geq \frac 14$ for the mKdV),  in a straightforward manner.
See Lemmas \ref{LEM:NLS1} and \ref{LEM:KdV1}.

%\newpage

The second issue is that we have increased the degree of the nonlinearity
in \eqref{I3}.
 In view of \eqref{I1}, 
 the last term in \eqref{I3} is now quintic.
 Indeed, by assuming that the time derivative falls on the first factor, 
  we can write the last term in \eqref{I3} as 
\begin{align}
&  \sim  \int_0^t 
\intt_{\xi = \xi_1-\xi_2+\xi_3}
\frac{e^{-i\Phi(\bar{\xi})t'}}{\Phi(\bar \xi)} \ft \N(v) (\xi_1, t')\prod_{j = 2}^3 \ft v(\xi_j, t')d\xi_1d\xi_2dt' \notag \\
&  \sim  \int_0^t 
\intt_{\substack{\xi = \xi_1-\xi_2+\xi_3\\
\xi_1 = \zeta_1 - \zeta_2 + \zeta_3}}
\frac{e^{-i(\Phi(\bar{\xi}) + \Phi(\bar \zeta))t'}}{\Phi(\bar \xi)}
\prod_{k = 1}^3 \ft v(\zeta_k, t')\prod_{j = 2}^3 \ft v(\xi_j, t')d\zeta_1d\zeta_2 d\xi_1d\xi_2dt',
\label{I4}
\end{align}

\noi
where $\Phi(\bar{\zeta}): =\Phi(\xi_1,\zeta_1,\zeta_2,\zeta_3)$.
The main idea is to perform integration by parts once again.
In order to exploit the oscillation of
$e^{-i(\Phi(\bar{\xi}) + \Phi(\bar \zeta))t'}$, 
we separately estimate the contributions 
from (i) nearly resonant case: $|\Phi(\bar \xi )+ \Phi(\bar \zeta )| \leq N_1$
and (ii) non-resonant case: $|\Phi(\bar \xi ) + \Phi(\bar \zeta )| > N_1$
for some suitable threshold $N_1 > 1$.\footnote{As we see later, we choose $N_1 \sim |\Phi(\bar \xi )|^{1-\dl}$
for some $\dl \in (0, 1)$.  See \eqref{C1}.}
Then, we integrate \eqref{I4} by parts only in the non-resonant case (ii),
thus introducing a septic nonlinearity.

By formally iterating this procedure indefinitely, 
we arrive at the  following  {\it normal form equation}:
\begin{align} 
v(t)  =u_0 + \sum_{j=2}^\infty  \N_0^{(j)} (v(t')) \bigg|_{t' = 0}^t
+ \int_0^t  \sum_{j=1}^\infty \N_1^{(j)}(v(t')) dt',
\label{I5}
\end{align}

\noi
where $\N_0^{(j)}(v)$ and $\N_1^{(j)}(v)$
are $(2j-1)$- and $(2j+1)$-multilinear terms,  respectively.
See~\eqref{NF2} below.
These multilinear terms  $\N_0^{(j)}(v)$ and $\N_1^{(j)}(v)$ appear
as a result of $(j-1)$-many iterations
of the normal form reductions.
Then, the main task is to estimate each term of the infinite series
in \eqref{I5} 
in the $C_t H^s_x$-norm
in a summable manner.
There are, however, three potential difficulties:
\begin{enumerate}
\item  The degrees of the nonlinearities
can be arbitrarily high.

\item In performing integration by parts in the $J$th step, 
the number of factors on which the time derivative falls
is $2J+1$.
Thus, the constants grow like $3\cdot5\cdot7\cdot \cdots \cdot (2J+1)$.

\item Our multilinear estimates need to provide small constants
on the terms {\it without} time integration,
i.e.~on the boundary terms, such as 
the second term on the right-hand side of  \eqref{I3} and $\N_0^{(j)} (v)$ in \eqref{I5}.
(We can introduce small constants for the terms
inside time integration by making the time interval of integration 
sufficiently short.)

\end{enumerate}

\noi
In Section \ref{SEC:3}, we will treat these issues and prove that the normal form equation
is unconditionally well-posed (Theorem \ref{THM:NF}).
Theorems \ref{THM:NLS}, \ref{THM:mKdV}, \ref{THM:NLS2}, and \ref{THM:mKdV2}
then follow as  corollaries to this unconditional well-posedness of the normal form equation.

In \cite{GKO}, we implemented an infinite iteration of normal form reductions
sketched above
in the context of the cubic NLS on the circle $\T$. 
In particular, we introduced the notion of ordered trees
(see Definition \ref{DEF:tree2})
and indexed all the multilinear terms by 
such ordered trees, 
handling the issues (1), (2), and (3).
Moreover, in handling the multilinear estimates, 
we exploited the discrete structure of the spatial frequency
space $\Z = (\T)^*$ in the form of the divisor counting argument.
In the non-periodic setting, 
such number theoretic tools are no longer available.
In this paper, we change our viewpoint
and view  these multilinear terms
as iterative compositions of trilinear operators (see Definition \ref{DEF:S} and \eqref{X2})
with  modulation restrictions.
We first establish trilinear localized modulation  estimates in Section \ref{SEC:2}
as a fundamental building block.
Then, by applying  such trilinear localized modulation estimates
in an iterative  manner, 
we estimate the multilinear terms of arbitrarily high degrees,
appearing in \eqref{I5}. 
This provides a simplified framework
for implementing an infinite iteration of normal form reductions.\footnote{During the preparation of 
 this manuscript, we learned that Kishimoto \cite{Kishimoto5, Kishimoto6}
independently 
used a similar abstraction of a basic multilinear estimate as a fundamental building block
in the application of an infinite iteration of normal form reductions 
to prove unconditional well-posedness for various dispersive PDEs in the periodic setting.}

Lastly, let us mention the role of two different topologies
for this normal form argument.
Roughly speaking, 
we 
\begin{itemize}
\item[(i)] establish a priori estimates in a stronger topology
(in $H^s(\R)$ with $s\geq 0$ for the cubic NLS and $s \geq \frac 14$ for the mKdV)
but

\smallskip

\item[(ii)] justify all the formal computations in a weaker topology
(in the Fourier-Lebesgue space $\F L^\infty(\R)$ defined in \eqref{FL} below)
for {\it smoother} solutions ($s \geq \frac 16$ for the cubic NLS
and $s > \frac 14$ for the mKdV), 
thus making sense of the identity  \eqref{I5} in the distributional sense.

\end{itemize}

\noi
By  formally performing an infinite iteration of normal form reductions, 
we derive the normal form equation \eqref{I5}  in Section~\ref{SEC:3}.
In establishing a priori estimates in $H^s(\R)$, 
we estimate each multilinear term in the $H^s$-norm
with $s \geq 0$ for the cubic NLS and $s\geq \frac 14$ for the mKdV.
In Section~\ref{SEC:4}, 
we justify all the formal computations performed in Section~\ref{SEC:3}, 
in particular the integration-by-parts steps,
where we switch time derivatives and integrations over spatial frequencies.
See \eqref{I3} for example.
For this purpose, we work in a weaker topology.
Indeed, we justify all the steps of the normal form reductions
for each {\it fixed} frequency $\xi \in \R$ of the interaction representation $\ft v(\xi)$
(and hence of each multilinear term in \eqref{I5}).
It is in this step where we need to 
 assume a higher regularity: $s\geq \frac 16$ 
 for the cubic NLS
 and $s> \frac 14$ for the mKdV.
 In the case of the mKdV, 
we also need to handle the derivative loss in the equation.
In particular, in each step of the normal form reductions
(i.e.~integration by parts), 
we use the equation \eqref{mKdV} to replace $\dt \ft v$ by the cubic nonlinearity (see \eqref{mKdV2}),
which introduces a derivative loss at each step.
Since we work for each fixed $\xi \in \R$, 
the derivative loss in the first ``generation'' (i.e.~in the original equation)
does not cause any problem.
We then  {\it shift
part of the derivative loss up by one generation}
to reduce the derivative loss in the last generation.
See Subsection \ref{SUBSEC:mKdV} for a further discussion.

\subsection{Remarks \& comments}
A precursor to this normal form approach  appeared in the work of
Babin-Ilyin-Titi \cite{BIT} for the KdV on $\T$, establishing unconditional well-posedness of the KdV in $L^2(\T)$.
See also \cite{KO} for an analogous unconditional well-posedness result for the periodic mKdV in $H^\frac{1}{2}(\T)$.
Note that two iterations were sufficient in \cite{BIT, KO}.
In \cite{GKO}, we further developed this normal form approach
and introduced an infinite iteration scheme of normal form reductions
in the context of the cubic NLS on the circle.
By performing normal form reductions infinitely many times, 
we proved unconditional well-posedness of the periodic cubic NLS in $H^{\frac 16}(\T)$.
In this series of work, 
 the viewpoint of unconditional well-posedness
 was first introduced in \cite{KO}, 
while the viewpoint of the (Poincar\'e-Dulac) normal form reductions
was first introduced in \cite{GKO}.

More recently, 
by combining an infinite iteration of normal form reductions
and the Cole-Hopf transform, 
we proved unconditional global well-posedness for  the quadratic derivative NLS on $\T$ 
for small mean-zero initial data \cite{CGKO}.
Moreover, this method allowed us to construct an infinite sequence of 
invariant quantities under the dynamics.
Kishimoto \cite{Kishimoto5}
adapted our infinite iteration approach 
and  proved unconditional well-posedness
for  higher dimensional NLS, 
the  Zakharov system on $\T^d$, $d = 1, 2$,  
the derivative cubic NLS on $\T$, 
the Benjamin-Ono and modified Benjamin-Ono equations in the periodic setting.

One may naturally expect that an infinite iteration of normal form reductions
is needed to prove Theorem \ref{THM:NLS} for the cubic NLS on the real line 
just as in the periodic case \cite{GKO}.\footnote{We point out recent works \cite{Pat, CHKP}
 on the construction of solutions to the cubic NLS on the real line via an infinite iteration of
 normal form reductions.  
 Their implementation of normal form reductions
  follows closely  the original argument in \cite{GKO}
 and unconditional uniqueness in modulation spaces 
 (including Theorem~\ref{THM:NLS} above) is established.
 In \cite{FO}, this construction was extended to almost critical Fourier-Lebesgue and modulation spaces.}
It is, however, to our surprise to see that  we also need to perform normal form reductions
infinitely many times in proving Theorem \ref{THM:mKdV}
for the mKdV on the real line.
This is in sharp contrast with the mKdV on the circle, 
where two iterations were sufficient \cite{KO}.
In this paper, we chose to study the cubic NLS \eqref{NLS} and the mKdV \eqref{mKdV}
as canonical examples.
As in the periodic case \cite{Kishimoto5}, 
our method of an  infinite iteration scheme of normal form reductions
is fairly general that it can be applied 
to study a wide variety of  equations in the Euclidean space $\R^d$ of general dimensions.

This normal form approach  has various applications
beyond  establishing unconditional uniqueness.
It has been used to exhibit nonlinear smoothing \cite{ET},
to prove a good approximation property in proving symplectic non-squeezing \cite{HK}, 
and establishing effective energy estimates with smoothing
in proving quasi-invariance of Gaussian measures on periodic functions under dispersive PDEs \cite{OT}.\footnote{Such an application of  normal form reductions in energy estimates is more classical and precedes the work of \cite{BIT}.}
More recently, the second author introduced a way to perform
normal form reductions infinitely many times in establishing 
energy estimates \cite{OST, OW}.
In particular, the notion of ordered trees was extended to that of ordered bi-trees
to accommodate normal form reductions on energy quantities.
Note that such an infinite iteration of normal form reductions
on an energy quantity basically amounts
to adding infinitely many correction terms in 
the $I$-method terminology, 
going far beyond the known application of the $I$-method  \cite{CKSTT}, 
where only finitely many correction terms were considered.

The main novelty of this paper is to reduce multilinear estimates to successive applications
of a basic trilinear localized modulation estimate and in fact 
to reduce the entire problem of proving unconditional well-posedness
to simply proving two basic trilinear estimates
(i.e.~localized modulation estimates in the strong norm
and in the weak norm: Lemmas~\ref{LEM:NLS1} and \ref{LEM:NLS3}
for the cubic NLS 
and Lemma~\ref{LEM:KdV1} and \ref{LEM:mk1}
for the mKdV).
Such reduction can easily be implemented
in the context of  our previous work 
\cite{GKO, OST, OW}, except for \cite{CGKO}
where the algebraic property of the equation played an important role
inducing cancellation of resonant terms via symmetrization
at each step of the normal form reductions.
See also the concluding remark
at the end of this paper.

\medskip

This  paper is organized as follows.
 In Section~\ref{SEC:2} we establish
 crucial trilinear estimates (localized modulation estimates)
 for the cubic NLS \eqref{NLS} and the mKdV \eqref{mKdV}.
  In Section~\ref{SEC:3}, we  perform an infinite iteration of normal form reductions
  and derive the normal form equation. 
We carry out  a computation in Section~\ref{SEC:3} at a formal level.
In Subsection \ref{SUBSEC:3.4}, we prove unconditional local well-posedness
of the normal form equation
in $H^s(\R)$
with $s \geq 0$ for the cubic NLS and $s \geq \frac 14$ for the mKdV
(Theorem~\ref{THM:NF})
and discuss the proofs of Theorems \ref{THM:NLS}, \ref{THM:mKdV},
\ref{THM:NLS2}, and \ref{THM:mKdV2}, assuming that smooth solutions
satisfy the normal form equation.
   In Section~\ref{SEC:4}, we justify the formal computation in Section~\ref{SEC:3}
   and then conclude the proofs of the main theorems.
   %present the proofs of  Theorems~\ref{THM:NLS} and \ref{THM:mKdV},

\medskip

\noi
{\bf Notations.}
We use $A\les B$ to denote 
an estimate of the form  $A\leq CB$ for some constant $C>0$,
 which may vary from line to line and depend on various parameters.
We also use 
 $A\sim B$ to denote $A\les B\les A$, 
 while 
we use $A\ll B$ to denote  $A\leq \eps B$ for some small absolute constant $\eps >0$.
We  use $a+$  to denote $a + \eps$  for arbitrarily small $\eps \ll 1$,
where an implicit constant is allowed to depend on $\eps > 0$
(and it usually diverges as $\eps \to 0$).

Given a function $f$ on $\R$, we 
define its Fourier transform by 
\[\F(f)(\xi) = \ft f(\xi)= \int_\R f(x)e^{-2\pi i x \xi} dx .\]

\noi
We drop the harmless factor of $2\pi$ in the following.
We define the Fourier-Lebesgue space $\F L^p(\R)$, $p \geq 1$, 
by the norm:
\begin{align}
\| f\|_{\F L^p} = \| \ft f\|_{L^p}.
\label{FL}
\end{align}

Any summation over capitalized variables such as  $N_1$, $N_2$, $\cdots$ are presumed to be dyadic, 
i.e.~these variables range over dyadic numbers of the form $2^k$, $k\in\Z_{\geq 0}$.
We also use the following shorthand notations:
 $\xi_{ij}$ and $\xi_{i - j}$ for $\xi_i+\xi_j$ and $\xi_i-\xi_j$, respectively.
 
Given dyadic $N \geq1$, 
we use $\P_N$ to denote the Littlewood-Paley projector
onto the spatial frequencies $\{|\xi|\sim N\}$.
Given $k \in \Z$, we use   $\Pi_k$ to denote the (spatial) frequency projector onto the interval $[k, k+1)$:
\begin{align}
\Pi_kv(\xi)  = \ind_{[k, k+1)}(\xi) \cdot v(\xi).
\label{K9a}
\end{align}

We use $S(t)$ to denote the  linear propagator
for the linear Schr\"odinger equation: $  i \dt u = \dx^2 u $
and the Airy equation: $\dt u = \dx^3 u$, depending on the context.
Namely, 
 $S(t) = e^{-it \dx^2}$ for the linear Schr\"odinger equation
and $S(t) = e^{t \dx^3}$ for the Airy equation.
Then, 
given a function $u$ on $\R\times \R$, we define its interaction representation $v$
by 
\begin{align}
v(t) = S(-t) u(t).
\label{interaction}
\end{align}

\noi
We mainly perform our analysis in terms of the interaction representation.

In the following, we only consider positive times for the simplicity of the presentation.

\section{Localized modulation estimates}\label{SEC:2}
In this section, we establish 
 crucial trilinear estimates (called  localized modulation estimates)
for the cubic NLS \eqref{NLS} and the mKdV \eqref{mKdV}.
See Lemmas \ref{LEM:NLS1} and \ref{LEM:KdV1}.
While their proofs are very elementary, 
these trilinear estimates constitute a fundamental building block
for multilinear estimates on the nonlinear terms (of arbitrarily high degrees)
appearing in the normal form reductions in Section \ref{SEC:3}.

\subsection{Localized modulation estimates for the cubic NLS}

We first consider the cubic NLS \eqref{NLS}.
On the Fourier side, we write  \eqref{NLS} as
\begin{align}
i\partial_t\widehat{u}(\xi)=-\xi^2\widehat{u}(\xi)
-\intt_{\xi = \xi_1-\xi_2+\xi_3}\widehat{u}(\xi_1)\overline{\widehat{u}(\xi_2)}\widehat{u}(\xi_3)
d\xi_1d\xi_2.
\label{NLS1}
\end{align}

\noi
Let $v(t) = S(-t)u(t)$ be the interaction representation defined in \eqref{interaction}.
Then, we have 
 $\ft v(\xi,t)=e^{-i\xi^2t}\widehat{u}(\xi,t)$.
Define a trilinear operator $\N(v_1, v_2, v_3)$ by 
\begin{align}
\ft \N (v_1,v_2,v_3)(\xi,t)
&: =  
i\intt_{\xi = \xi_1-\xi_2+\xi_3}e^{-i\Phi(\bar{\xi})t}\ft v_1(\xi_1)\cj{\ft v_2(\xi_2)}\ft v_3(\xi_3)d\xi_1d\xi_2, 
\label{NLS1a}
\end{align}

\noi where the modulation function
 $\Phi(\bar{\xi})$ is as in \eqref{Phi1}.
With this notation, we can  write \eqref{NLS1} as 
\begin{equation}
\label{NLS2}
\partial_tv= \N(v, v, v). 
\end{equation}

\begin{remark} \label{REM:tdepend} \rm
(i)
When there is no confusion, 
 we simply denote  $\ft v(\xi,t)$ 
 and $\ft \N(v_1, v_2, v_3)$
 by  $ v(\xi,t)$
 and $ \N(v_1, v_2, v_3)$ in the following.
 For example, we write \eqref{NLS1a} as 
\begin{align*}
 \N (v_1,v_2,v_3)(\xi,t)
 =  
i\intt_{\xi = \xi_1-\xi_2+\xi_3}e^{-i\Phi(\bar{\xi})t} v_1(\xi_1)\cj{ v_2(\xi_2)} v_3(\xi_3)
d\xi_1d\xi_2
\end{align*}

\noi
under this convention.
Note that while the equation \eqref{NLS2}
can be interpreted 
as an equation
on the physical side or on the Fourier side
under this convention, 
this does not cause any confusion in terms of its meaning.
A similar comment applies to other multilinear operators.

\smallskip

\noi
(ii)
Due to the presence of 
the time-dependent phase factor $e^{- i \Phi(\bar \xi) t}$, 
the trilinear expression $\N(v_1, v_2, v_3)$ 
is non-autonomous and in fact depends on $t$. 
For simplicity of notations, however, we suppress such $t$-dependence
when there is no confusion.
We also set $\N(v) = \N(v, v, v)$, when all the three arguments are identical.
We apply this convention to  all the multilinear operators
appearing in this paper.

\end{remark}

For $ M\geq 1$ and $\al  \in \R$, we also define trilinear operators
$\N^\al _{\leq M}$, 
$\N^\al _{> M}$, 
and $\N^\al _{ M}$
with modulation restrictions:
\begin{align}
\N^\al _{\leq\,(>)\, M}(v_1,v_2,v_3)(\xi,t)
&:=  i\intt_{\substack{\xi = \xi_1-\xi_2+\xi_3\\|\Phi(\bar{\xi})-\al |\leq\,(>)\, M}}
e^{-i\Phi(\bar{\xi})t}v_1(\xi_1)\overline{v_2(\xi_2)}v_3(\xi_3)d\xi_1d\xi_2, 
\label{NLS2a}\\
\rule{0mm}{6mm}
\N^\al _{ M}(v_1,v_2,v_3)(\xi,t)
&:=  \N^\al _{\leq2 M}(v_1,v_2,v_3)(\xi,t)
- \N^\al _{\leq  M}(v_1,v_2,v_3)(\xi,t)
\notag \\
&\hphantom{:}= i\intt_{\substack{\xi = \xi_1-\xi_2+\xi_3\\  |\Phi(\bar{\xi})-\al |\sim M}}
e^{-i\Phi(\bar{\xi})t}v_1(\xi_1)\overline{v_2(\xi_2)}v_3(\xi_3)d\xi_1d\xi_2,
\notag
\end{align}

\noi
where $|\Phi(\bar{\xi})-\al |\sim  M$ is a shorthand notation for $M<|\Phi(\bar{\xi})-\al |\leq2 M$.
The following trilinear operator also plays an important role
in our analysis:
\[ \I^\al _{M}(v_1,v_2,v_3)(\xi,t)
:=  i\intt_{\substack{%~\\~\\ 
\xi = \xi_1-\xi_2+\xi_3
\\  |\Phi(\bar{\xi})-\al | \sim M }}
\frac{e^{-i\Phi(\bar{\xi})t}}{\Phi(\bar{\xi})-\al }v_1(\xi_1)\overline{v_2(\xi_2)}v_3(\xi_3)
d\xi_1d\xi_2.\]

\noi
We also define %$\I^\al_{\leq M}$ and 
$\I^\al_{> M}$
in an obvious manner.
In the subsequent part of this paper, we use the following conventions:

\begin{itemize}
\item[$\bullet$]
When $\al =0$, we  drop the superscript 
and simply write 
$\N_{M}$, $\N_{\leq  M}$, \dots, 
for 
$\N^0_M$, $\N^0_{\leq  M}$, \dots.

\smallskip

\item[$\bullet$]
In Section \ref{SEC:3}, 
these multilinear operators appear in an iterative manner.
For clarity, 
we often write $\N_{|\Phi (\bar \xi) - \al|\sim M}^\al$ for $\N^\al_M$,
thus explicitly showing the variable of restriction.

\end{itemize}

\begin{remark}\rm
Recall that the (time) resonance corresponds to $\Phi(\bar\xi) = 0$.
Thus, the term 
$\N^0_{\leq  M}$ corresponds to the {\it nearly resonant} contribution
to the nonlinearity $\N$ (with the cutoff size $M$).
\end{remark}

We now state the localized modulation estimates
for the cubic NLS.
These trilinear estimates play a key role in 
our analysis in Section \ref{SEC:3}.

\begin{lemma}[Localized modulation estimates for the cubic NLS]
\label{LEM:NLS1}
Let $s\geq0$.
Then,  we have
\begin{align}
\label{R1}
\|\N^\al _{\leq M}(v_1,v_2,v_3) \|_{H^s}
& \lesssim 
\jb{\al }^{0+} M^{\frac{1}{2}+}
\prod_{j = 1}^3\|v_j\|_{H^s},\\
\label{R2}
\|\N^\al _{\leq M}(v)-\N^\al _{\leq M}(w)\|_{H^s}
& \lesssim 
\jb{\al }^{0+} M^{\frac{1}{2}+}
\big(\|v\|_{H^s}^2+\|w\|_{H^s}^2\big)\|v-w\|_{H^s}, 
\end{align}

\noi
for any $M\geq 1$ and $\al  \in \R$. 
\end{lemma}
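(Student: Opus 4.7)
The plan is to prove (R1) first, since (R2) follows by trilinearity. For (R1), I would first reduce to the case $s=0$: the relation $\xi=\xi_1-\xi_2+\xi_3$ gives $\jb{\xi}^s \lesssim \jb{\xi_1}^s+\jb{\xi_2}^s+\jb{\xi_3}^s$ for $s\ge 0$, so distributing the weight $\jb{\xi}^s$ across the three factors and using $\|v_k\|_{L^2}\le \|v_k\|_{H^s}$ on the two unweighted factors reduces matters to showing
$$\|\N^\al_{\leq M}(v_1,v_2,v_3)\|_{L^2_\xi}\les M^{1/2}\prod_{j=1}^3\|v_j\|_{L^2},$$
which a fortiori yields the stated $\jb{\al}^{0+}M^{1/2+}$ bound.

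The key idea for the $s=0$ case is to replace the sharp modulation cutoff by its Fourier integral. Writing
$$\mathbf{1}_{|\Phi-\al|\le M}=\int_\R K_M(\tau)\,e^{i\tau(\Phi-\al)}\,d\tau,\qquad K_M(\tau)=\frac{\sin(M\tau)}{\pi\tau},$$
one has $\|K_M\|_{L^2_\tau}\simeq M^{1/2}$ by Plancherel. Substituting into the definition of $\N^\al_{\leq M}$, the phase collapses as $e^{i\tau\Phi}\cdot e^{-i\Phi t}=e^{-i\Phi(t-\tau)}$, which gives
$$\N^\al_{\leq M}(v_1,v_2,v_3)(\xi,t)=\int_\R K_M(\tau)\,e^{-i\al\tau}\,\widetilde\N(v_1,v_2,v_3)(\xi,t-\tau)\,d\tau,$$
where $\widetilde\N$ is the trilinear expression without the modulation cutoff. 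Applying Minkowski in $\xi$ and Cauchy--Schwarz in $\tau$ yields
$$\|\N^\al_{\leq M}(v_1,v_2,v_3)(\cdot,t)\|_{L^2_\xi}\le \|K_M\|_{L^2_\tau}\,\|\widetilde\N(v_1,v_2,v_3)\|_{L^2_{\tau,\xi}}\les M^{1/2}\,\|\widetilde\N(v_1,v_2,v_3)\|_{L^2_{\tau,\xi}}.$$

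Unwinding $\widetilde\N$ on the Fourier side, one identifies $\widetilde\N(v_1,v_2,v_3)(\xi,t')=e^{-i\xi^2 t'}\widehat{U_1(t')\overline{U_2(t')}U_3(t')}(\xi)$, where $U_j(t')=S(-t')\F^{-1}v_j$ is the free Schr\"odinger evolution with $\|U_j(t')\|_{L^2_x}=\|v_j\|_{L^2_\xi}$. Plancherel in $\xi$ then shows $\|\widetilde\N(v_1,v_2,v_3)\|_{L^2_{\tau,\xi}}=\|U_1\overline{U_2}U_3\|_{L^2_{x,t}}$, and H\"older ($L^6\cdot L^6\cdot L^6\hookrightarrow L^2$) combined with the $(6,6)$-admissible Strichartz estimate $\|U_j\|_{L^6_{x,t}(\R\times\R)}\les \|v_j\|_{L^2}$ concludes $\|\widetilde\N(v_1,v_2,v_3)\|_{L^2_{\tau,\xi}}\les \prod\|v_j\|_{L^2}$, completing (R1). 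Finally, (R2) is immediate from the trilinear telescoping identity $\N^\al_{\leq M}(v)-\N^\al_{\leq M}(w)=\N^\al_{\leq M}(v-w,v,v)+\N^\al_{\leq M}(w,v-w,v)+\N^\al_{\leq M}(w,w,v-w)$ together with (R1) applied to each summand.

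The main conceptual obstacle here is that a naive Cauchy--Schwarz directly in the frequency variables $(\xi_1,\xi_3)$ is doomed: the 2D measure of the modulation region $\{(\xi_1,\xi_3):|\Phi-\al|\le M\}$ is infinite, as it is an unbounded hyperbolic strip with asymptotes along the null curves $\xi_1=\xi$ and $\xi_3=\xi$ of $\Phi=2(\xi-\xi_1)(\xi-\xi_3)$. The Fourier-integral representation of the cutoff converts this divergent 2D area obstruction into the finite 1D quantity $\|K_M\|_{L^2_\tau}\simeq M^{1/2}$, which is precisely what produces the $M^{1/2}$ factor; the remaining trilinear $L^2_{x,t}$ bound is then handled by the standard $L^6$-Strichartz inequality on $\R\times\R$.
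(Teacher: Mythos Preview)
Your proof is correct and in fact gives the sharper bound $M^{1/2}$ with no $\jb{\al}^{0+}$ loss, but the route is genuinely different from the paper's. The paper stays entirely on the frequency side: after the same reduction to $s=0$ (via the product inequality $\jb{\xi}^s\les\jb{\xi_1}^s\jb{\xi_2}^s\jb{\xi_3}^s$ rather than your additive one) and duality, it splits according to whether $\min(|\xi_2-\xi_1|,|\xi_2-\xi_3|)\le 1$. Near the null set of $\Phi$ a direct H\"older argument works; away from it, the factorization $\Phi=2(\xi-\xi_1)(\xi-\xi_3)$ makes the Cauchy--Schwarz supremum $\sup_\xi\int\ind_{|\Phi-\al|\le M}\,d\xi_1d\xi_3$ finite up to a logarithm, which is exactly where the $\jb{\al}^{0+}M^{0+}$ loss appears. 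So your remark that Cauchy--Schwarz is ``doomed'' is a slight overstatement: it works after this near/far split, just not uniformly.

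What each approach buys: your Fourier-integral-plus-Strichartz argument is cleaner and sharper, and makes transparent that the $M^{1/2}$ is precisely $\|K_M\|_{L^2_\tau}$. On the other hand, the paper's explicit goal (see the introduction) is to avoid auxiliary spaces such as Strichartz altogether and keep everything elementary; their frequency-side argument also transfers verbatim to the mKdV localized modulation estimate (Lemma~\ref{LEM:KdV1}), where your method would require a separate trilinear smoothing input to absorb the derivative in the nonlinearity. A minor technical point: since $K_M\notin L^1_\tau$, the identity $\ind_{|\Phi-\al|\le M}=\int K_M(\tau)e^{i\tau(\Phi-\al)}d\tau$ needs a standard limiting or smooth-cutoff justification, but this is routine.
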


\begin{remark}\rm 
Recall that the trilinear operator
$\N^\al _{\leq M}(v_1,v_2,v_3)$
depends on $t \in \R$ in  an non-autonomous manner.
Hence, 
strictly speaking, 
we should have written the first estimate \eqref{R1} 
 as 
\begin{align*}
\sup_{t\in \R}  \|\N^\al _{\leq M}(v_1,v_2,v_3)\|_{H^s}
& \lesssim 
\jb{\al }^{0+} M^{\frac{1}{2}+}
\prod_{j = 1}^3\|v_j\|_{H^s}.
\end{align*}

\noi
Note that,  in the definition \eqref{NLS2a}, 
 the non-autonomous parameter $t\in \R$ appears
only in the 
oscillatory factor $e^{-i\Phi(\bar{\xi})t}$.
We, however, do not make use of 
this oscillatory factor
in the  proof of \eqref{R1}.
See \eqref{A1} below.
In particular,   \eqref{R1}
holds uniformly in  $t\in \R$.
In view of this observation, 
we simply write \eqref{R1}
with the understanding that the estimate holds uniformly 
in the non-autonomous parameter $t \in \R$.
We use this convention for  all the multilinear estimates
appearing in this paper.

Let us also note that the ``spatial'' estimate \eqref{R1} immediately implies
the following space-time estimate:
\begin{align*}
\|\N^\al _{\leq M}(v_1,v_2,v_3)\|_{L^\infty_T H^s_x}
& \lesssim %(|\al | + M)^{\frac{1}{4}}
\jb{\al }^{0+} M^{\frac{1}{2}+}
\prod_{j = 1}^3\|v_j\|_{L^\infty_T H^s_x}
\end{align*}

\noi
for all $v_j \in L^\infty([-T, T];  H^s(\R))$.
The same remark also applies to the other multilinear estimates.

\end{remark}

\begin{proof}
In the following, we only present the proof of  \eqref{R1}, 
since
the second estimate \eqref{R2} on the difference follows 
from \eqref{R1} and the multilinearity of $\N_{\leq M}^\al $.
By the triangle inequality with $s \geq 0$, we have 
$\jb{\xi}^s\les \jb{\xi_1}^s\jb{\xi_2}^s\jb{\xi_3}^s$
under $\xi = \xi_1-\xi_2+\xi_3$.
Hence, it suffices to prove~\eqref{R1} for $s = 0$.

By duality, the desired estimate \eqref{R1} follows once we prove %it suffices to show that
\begin{align}
\Bigg|
\intt_{ \xi = \xi_1-\xi_2+\xi_3}
\ind_{|\Phi(\bar{\xi})-\al |\leq M}
v_1(\xi_1)v_2(\xi_2)v_3(\xi_3)v_4(\xi)d\xi_1d\xi_2d\xi
\Bigg|
\lesssim 
\jb{\al }^{0+} M^{\frac{1}{2}+}
\prod_{j = 1}^4 \|v_j\|_{L^2}
\label{A1}
\end{align}

\noi
for all non-negative functions $v_1, \dots, v_4\in L^2_\xi(\R)$.

\medskip

\noi
$\bullet$ {\bf Case 1:}  $\min(|\xi_{2-1}|, |\xi_{2-3}|)\leq 1$.
\\
\indent
 Let $\zeta = \xi_2 - \xi_1 = \xi_3 - \xi$.
Without loss of generality, 
we assume that $|\zeta|\leq1$.
Then, it follows from  H\"older's  inequality that 
\begin{align*}
\text{LHS of }\eqref{A1}
& = \bigg|\int_{|\zeta|\leq 1}  \int_{\xi_1} v_1(\xi_1) v_2 (\xi_1 + \zeta) d\xi_1\int_{\xi_3} v_3 (\xi_3) v_4 (\xi_3 - \zeta)d\xi_3 d\zeta\bigg|\notag \\
& \leq  \bigg\|  \int_{\xi_1} v_1(\xi_1) v_2 (\xi_1 + \zeta) d\xi_1\bigg\|_{L^\infty_\zeta}
\bigg\|\int_{\xi_3} v_3 (\xi_3) v_4 (\xi_3 - \zeta)d\xi_3 \bigg\|_{L^\infty_\zeta}\notag \\
& \lesssim \prod_{j = 1}^4 \|v_j\|_{L^2}. 
\end{align*}

\noi
This proves \eqref{A1}.

\medskip
\noi
$\bullet$ \textbf{Case 2:} 
 $\min(|\xi_{2-3}|, |\xi_{2-1}|)>1$.
\\
\indent
Without loss of generality, assume that $\xi - \xi_3 >1 $.
Under $|\Phi(\bar{\xi})-\al |\leq M$, 
it follows from \eqref{Phi1} that 
\begin{align}
\frac{\al -M}{2(\xi - \xi_3)}
\leq \xi - \xi_1\leq 
\frac{\al +M}{2(\xi - \xi_3)}.
\label{A2}
\end{align}

\noi
Then, 
by the standard Cauchy-Schwarz argument
 with \eqref{A2}, we have 
\begin{align}
\text{LHS of }\eqref{A1}
& \leq \bigg\|
\intt_{\xi = \xi_1 - \xi_2 + \xi_3} % \xi_1-\xi_2+\xi_3 -\xi_4 = 0}
\ind_{|\Phi(\bar{\xi})-\al |\leq M}
v_1(\xi_1)v_2(\xi_2)v_3(\xi_3)d\xi_1 d\xi_3\bigg\|_{L^2_{\xi}}\|v_4 \|_{L^2}\notag\\
& \leq \sup_{\xi}
\bigg(\intt_{\xi = \xi_1 - \xi_2 + \xi_3} % \xi_1-\xi_2+\xi_3 -\xi_4 = 0}
\ind_{|\Phi(\bar{\xi})-\al |\leq M}d\xi_1 d\xi_3 \bigg)^\frac{1}{2}
\prod_{j = 1}^4 \|v_j\|_{L^2} \notag\\
& \leq \sup_{\xi}
\bigg(\int_{1< \xi - \xi_3\les |\al | + M } % \xi_1-\xi_2+\xi_3 -\xi_4 = 0}
\frac{M}{\xi-\xi_3}d\xi_3 \bigg)^\frac{1}{2}
\prod_{j = 1}^4 \|v_j\|_{L^2} \notag \\
& \les \jb{\al }^{0+} M^{\frac{1}{2}+}
\prod_{j = 1}^4 \|v_j\|_{L^2}, 
\label{A2a}
\end{align}

\noi
where we used the assumption that $|\xi - \xi_1| > 1$ 
and  $|\Phi(\bar{\xi})| \leq |\al |+ M$
in the third inequality.
This completes the proof of Lemma \ref{LEM:NLS1}.
\end{proof}

Next, we estimate the trilinear operators
 $\I^\al _{ M}$
 and 
 $\I^\al _{> M}$.

\begin{lemma}\label{LEM:NLS2}

Let $s\geq 0$.
Then, we have
\begin{align}
\label{B1}
\|\I^\al _{ M}(v)\|_{H^s}& \lesssim \jb{\al }^{0+} M^{-\frac{1}{2}+}\|v\|_{H^s}^3,\\
\label{B2}
\|\I^\al _{M}(v)-\I^\al _{ M}(w)\|_{H^s} 
& \lesssim \jb{\al }^{0+} M^{-\frac{1}{2}+}
 \big(\|v\|_{H^s}^2+\|w\|_{H^s}^2\big)\|v-w\|_{H^s},
\intertext{and} 
\label{B3}
\|\I^\al _{> M}(v)\|_{H^s}& \lesssim \jb{\al }^{0+} M^{-\frac{1}{2}+}\|v\|_{H^s}^3,\\
\label{B4}
\|\I^\al _{> M}(v)-\I^\al _{>  M}(w)\|_{H^s} 
& \lesssim \jb{\al }^{0+} M^{-\frac{1}{2}+}
 \big(\|v\|_{H^s}^2+\|w\|_{H^s}^2\big)\|v-w\|_{H^s}, 
\end{align}

\noi
for any $M\geq 1$ and $\al  \in \R$.

\end{lemma}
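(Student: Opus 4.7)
The plan is to reduce all four estimates to (minor modifications of) the Cauchy-Schwarz argument in the proof of Lemma \ref{LEM:NLS1}. The crucial observation is that on the support of $\I^\al_M$, namely $\{|\Phi(\bar\xi) - \al| \sim M\}$, the multiplier $(\Phi(\bar\xi)-\al)^{-1}$ is bounded pointwise by $\les M^{-1}$, so the multiplier for $\I^\al_M$ is pointwise dominated by $M^{-1}\ind_{|\Phi(\bar\xi)-\al|\leq 2M}$.

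First, I would prove the trilinear version of \eqref{B1}, i.e.\ the same bound for $\I^\al_M(v_1,v_2,v_3)$ with $\prod_j \|v_j\|_{H^s}$ on the right. As in Lemma \ref{LEM:NLS1}, the triangle inequality $\jb{\xi}^s \les \prod_j \jb{\xi_j}^s$ under $\xi = \xi_1 - \xi_2 + \xi_3$ (valid for $s\geq 0$) reduces matters to $s=0$, where by duality it suffices to bound
\begin{equation*}
\Bigg|\intt_{\xi = \xi_1 - \xi_2 + \xi_3} \frac{\ind_{|\Phi(\bar\xi)-\al|\sim M}}{\Phi(\bar\xi)-\al}\, v_1(\xi_1) v_2(\xi_2) v_3(\xi_3) v_4(\xi)\, d\xi_1 d\xi_2 d\xi\Bigg|
\les \jb{\al}^{0+} M^{-\frac12+} \prod_{j=1}^4 \|v_j\|_{L^2}.
\end{equation*}
Following the dichotomy in the proof of Lemma \ref{LEM:NLS1}: in Case 1 ($\min(|\xi_{2-1}|, |\xi_{2-3}|) \leq 1$) one simply pulls out the pointwise bound $M^{-1}$ of the multiplier and applies the same H\"older argument to the $\zeta$-convolutions, which yields a bound $\les M^{-1} \prod_j \|v_j\|_{L^2}$ — even stronger than needed. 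In Case 2, repeating the Cauchy-Schwarz bound leading to \eqref{A2a} but with the indicator replaced by the full multiplier gives
\begin{equation*}
\sup_\xi \intt_{\xi = \xi_1 - \xi_2 + \xi_3} \frac{\ind_{|\Phi(\bar\xi)-\al|\sim M}}{|\Phi(\bar\xi)-\al|^2}\, d\xi_1 d\xi_3 \les M^{-2}\cdot \jb{\al}^{0+} M^{1+} = \jb{\al}^{0+} M^{-1+},
\end{equation*}
whose square root is exactly $\jb{\al}^{0+} M^{-1/2+}$.

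Next, for \eqref{B3}, I would dyadically decompose
\begin{equation*}
\I^\al_{>M}(v_1,v_2,v_3) = \sum_{\substack{K \geq M \\ K\text{ dyadic}}} \I^\al_K(v_1,v_2,v_3),
\end{equation*}
apply the trilinear version of \eqref{B1} with a strictly positive choice of the $0+$ gain (say exponent $\frac{1}{2}-\eps$), and sum the resulting geometric series $\sum_{K\geq M,\,K\text{ dyadic}} K^{-1/2+\eps} \les_\eps M^{-1/2+\eps}$, which is reabsorbed into the $0+$ notation. Finally, the difference estimates \eqref{B2} and \eqref{B4} follow from the trilinearity of $\I^\al_M$ and $\I^\al_{>M}$ via the telescoping identity
\begin{equation*}
\I(v) - \I(w) = \I(v-w,v,v) + \I(w,v-w,v) + \I(w,w,v-w),
\end{equation*}
combined with the trilinear forms of \eqref{B1} and \eqref{B3}.

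I do not expect any serious obstacle: the whole argument is a direct reuse of the Cauchy-Schwarz method of Lemma \ref{LEM:NLS1}, with the sole improvement being the pointwise factor $M^{-1}$ gained from the denominator $\Phi(\bar\xi)-\al$. The only technical point to watch is that the $+$ in the exponent of \eqref{B1} must be strictly positive so that the dyadic geometric series in the proof of \eqref{B3} actually converges; this is automatic from the logarithmic slack in Case 2 of Lemma \ref{LEM:NLS1}.
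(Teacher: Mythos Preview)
Your proposal is correct and is essentially the same as the paper's proof: pull out the pointwise bound $|\Phi(\bar\xi)-\al|^{-1}\les M^{-1}$ on the support, invoke the estimate of Lemma~\ref{LEM:NLS1} (which does not use the oscillatory factor), then dyadically sum for $\I^\al_{>M}$ and use trilinearity for the differences. The only cosmetic difference is that the paper cites Lemma~\ref{LEM:NLS1} as a black box rather than rerunning Cases~1 and~2 explicitly.
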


\begin{proof}
In the following, we only prove \eqref{B1} and \eqref{B3} 
since \eqref{B2} and \eqref{B4} follow in a similar manner.

Note that we did not exploit 
the oscillatory nature of the exponential factor  
$e^{- i\Phi(\bar{\xi}) t}$ in the proof of Lemma \ref{LEM:NLS1}.
See \eqref{A1}.
Hence, by Lemma \ref{LEM:NLS1},  we have 
\begin{align*}
\|\I^\al _{M}(v)\|_{H^s}
&=\bigg\|\intt_{\substack{\xi = \xi_1-\xi_2+\xi_3\\|\Phi(\bar{\xi})-\al | \sim M }} 
\frac{e^{- i\Phi(\bar{\xi}) t}}{\Phi(\bar{\xi})-\al }v(\xi_1)\cj{v(\xi_2)}v(\xi_3)d\xi_1d\xi_2\bigg\|_{H^s}\\
&\les \frac{1}{M} \bigg\|\intt_{\xi = \xi_1-\xi_2+\xi_3} 
\ind_{|\Phi(\bar{\xi})-\al | \sim M }
\prod_{j = 1}^3 |v(\xi_j)| d\xi_1d\xi_2\bigg\|_{H^s}\\
&\lesssim \jb{\al }^{0+} M^{-\frac{1}{2}+}\|v\|_{H^s}^3.
\end{align*}

\noi
This proves \eqref{B1}.
Similarly, we have 
\begin{align*}
\|\I^\al _{> M}(v)\|_{H^s}
&=\bigg\|\intt_{\substack{\xi = \xi_1-\xi_2+\xi_3\\|\Phi(\bar{\xi})-\al | > M }} 
\frac{e^{- i\Phi(\bar{\xi}) t}}{\Phi(\bar{\xi})-\al }v(\xi_1)\cj{v(\xi_2)}v(\xi_3)d\xi_1d\xi_2\bigg\|_{H^s}\\
&\leq\sum_{\substack{N\geq M\\\text{dyadic}}}\bigg\|
\intt_{\substack{\xi = \xi_1-\xi_2+\xi_3\\
N<|\Phi(\bar{\xi})-\al |\leq 2N}} 
\frac{ e^{- i\Phi(\bar{\xi}) t}}{\Phi(\bar{\xi})-\al }v(\xi_1)\cj{v(\xi_2)}v(\xi_3)d\xi_1d\xi_2\bigg\|_{H^s}\\
&\lesssim \jb{\al }^{0+}\sum_{\substack{N \geq M \\\text{dyadic}}}N^{-\frac{1}{2}+}\|v\|_{H^s}^3\\
&\lesssim \jb{\al }^{0+} M^{-\frac{1}{2}+}\|v\|_{H^s}^3.
\end{align*}

\noi
This proves \eqref{B3}.
\end{proof}

\subsection{Localized modulation  estimates for the mKdV}
In this subsection, we perform similar analysis on the mKdV \eqref{mKdV}
and establish localized modulation estimates
on the relevant trilinear operators.
Let $v(t) = S(-t)u(t)$ be the interaction representation defined in \eqref{interaction}.
Then, we have 
 $\ft v(\xi,t)=e^{i\xi^3t}\widehat{u}(\xi,t)$.
Define a trilinear operator $\N(v_1, v_2, v_3)$ by\footnote{We 
follow
the conventions introduced in Remark \ref{REM:tdepend}.} 
\begin{align}
\N(v_1, v_2, v_3) (\xi, t) 
:=  - i\intt_{\xi = \xi_1+\xi_2+\xi_3}\xi  e^{i\Psi(\bar{\xi})t}v_1(\xi_1)v_2(\xi_2)v_3(\xi_3)d\xi_1 d\xi_2, 
\label{mKdV2}
\end{align}

\noi
where the modulation function $\Psi(\bar{\xi})$ is given by 
\begin{align}
\Psi(\bar{\xi})=\Psi(\xi,\xi_1,\xi_2,\xi_3)=\xi^3-\xi_1^3-\xi_2^3-\xi_3^3=3(\xi_1+\xi_2)(\xi_2+\xi_3)(\xi_3+\xi_1). 
\label{Psi}
\end{align}

\noi
Here, the last equality holds under the condition $\xi = \xi_1+\xi_2+\xi_3$.
With this notation, we can write the mKdV \eqref{mKdV}
as 
\begin{equation}\label{KdV2}
\dt v= \N(v).
\end{equation}

As before, we define several trilinear operators. 
Given $ M\geq 1$ and $\al  \in \R$, 
we let
\begin{align*}
\N^\al _{\leq\,(>) M}(v_1,v_2,v_3)(\xi,t)
&:= - i\intt_{\substack{\xi = \xi_1+\xi_2+\xi_3\\|\Psi(\bar{\xi})-\al |\leq\,(>) M}}
\xi e^{i\Psi(\bar{\xi}) t}
v_1(\xi_1)v_2(\xi_2)v_3(\xi_3)d\xi_1d\xi_2,\\
\N^\al _{ M}(v_1,v_2,v_3)(\xi,t)
&:= - i\intt_{\substack{\xi = \xi_1+\xi_2+\xi_3\\ |\Psi(\bar{\xi})-\al |\sim M}}
\xi e^{i\Psi(\bar{\xi}) t}v_1(\xi_1)v_2(\xi_2)v_3(\xi_3)
d\xi_1d\xi_2.
\end{align*}

\noi
We also define the following  trilinear operator: 
\[\I^\al _{M}(v_1,v_2,v_3)(\xi,t)
:=-  i\intt_{\substack{\xi = \xi_1+\xi_2+\xi_3\\|\Psi(\bar{\xi})-\al |\sim M }} 
\frac{\xi e^{i\Psi(\bar{\xi}) t}}{\Psi(\bar{\xi})-\al }v_1(\xi_1)v_2(\xi_2)v_3(\xi_3)d\xi_1d\xi_2\]

\noi
and  define 
$\I^\al_{> M}$
in an obvious manner.

We now present  the localized modulation estimates for the mKdV.
While the proof does not employ any sophisticated analytical tools, 
it is  more involved than
the proof of 
Lemma~\ref{LEM:NLS1}.

\begin{lemma}[Localized modulation estimates for the mKdV]
\label{LEM:KdV1}
Let $s\geq \frac 14$.
Then, we have
\begin{align}\label{K1}
\|\N^\al _{\leq M}(v_1,v_2,v_3)\|_{H^s}
& \lesssim \jb{\al }^{0+} M^{\frac{1}{2}+}
\prod_{j=1}^3\|v_j\|_{H^s},\\
\label{K2}
\|\N^\al _{\leq M}(v)-\N^\al _{\leq M}(w)\|_{H^s}
& \lesssim \jb{\al }^{0+} M^{\frac{1}{2}+}
\big(\|v\|_{H^s}^2+\|w\|_{H^s}^2\big)\|v-w\|_{H^s}, 
\end{align}

\noi
for any $M\geq 1$ and $\al  \in \R$.

\end{lemma}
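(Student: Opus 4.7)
The plan is to adapt the duality-plus-Cauchy--Schwarz argument used in Lemma~\ref{LEM:NLS1}, while accommodating two features specific to the mKdV: the derivative loss $\xi$ in the definition of $\N^\al_{\leq M}$, and the cubic factorization
\[
\Psi(\bar{\xi}) = 3(\xi - \xi_1)(\xi - \xi_2)(\xi - \xi_3)
\]
(valid under $\xi = \xi_1 + \xi_2 + \xi_3$). The difference estimate \eqref{K2} follows from \eqref{K1} by multilinearity through the standard telescoping identity for $v_1 v_2 v_3 - w_1 w_2 w_3$, so I focus on \eqref{K1}. Testing the $H^s$-norm against $g \in L^2_\xi$ with $\|g\|_{L^2} \le 1$, setting $w_j(\xi_j) := \jb{\xi_j}^s |v_j(\xi_j)|$, and applying Cauchy--Schwarz first in $\xi$ against $g$ and then in $(\xi_1,\xi_2)$ against $w_1(\xi_1) w_2(\xi_2) w_3(\xi - \xi_1 - \xi_2)$ (exactly as in \eqref{A2a}), matters reduce to the pointwise measure-weighted bound
\[
\sup_{\xi \in \R} \iint_{\substack{\xi = \xi_1 + \xi_2 + \xi_3 \\ |\Psi(\bar{\xi}) - \al| \le M}} \frac{\jb{\xi}^{2s}\, \xi^{2}}{\jb{\xi_1}^{2s} \jb{\xi_2}^{2s} \jb{\xi_3}^{2s}}\, d\xi_1 \, d\xi_2 \les \jb{\al}^{0+} M^{1+},
\]
with $\xi_3 = \xi - \xi_1 - \xi_2$.

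To prove the display above I would dyadically decompose $|\xi_j| \sim N_j$ and $|\xi| \sim N$, and by the symmetry of $\Psi$ in $(\xi_1,\xi_2,\xi_3)$ assume $N_1 \ge N_2 \ge N_3$. The constraint $\xi = \xi_1 + \xi_2 + \xi_3$ forces two essentially distinct scenarios: (a) the \emph{output-dominant} regime $N_1 \sim N$, in which at least two of the factors $|\xi - \xi_i|$ appearing in $\Psi$ are of size $\sim N_1$, so that the modulation cutoff confines the remaining factor to an interval of length $\les (M + |\al|)/N_1^{2}$ and a change of variable (e.g.\ $(\xi_2, \xi_3) \mapsto (\xi_2 + \xi_3,\, \xi_2 - \xi_3)$) yields a contribution of size $\les M$ per dyadic block; and (b) the \emph{high-high} regime $N_1 \sim N_2 \gg N$, where $\xi_1 + \xi_2 = \xi - \xi_3$ while the other two factors of $\Psi$ are of size $\sim N_1$, so the modulation cutoff pins $\xi_3$ (equivalently $(\xi_1, \xi_2)$ along one direction) to an interval of length $\les (M + |\al|)/N_1^{2}$, producing a total measure of $\les M/N_1$ in $d\xi_1\,d\xi_2$. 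The remaining low-frequency subcase $N_1 \les 1$ is elementary and mimics Case~1 of the proof of Lemma~\ref{LEM:NLS1}.

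The main obstacle is the high-high regime (b): there the integrand is of order $\jb{\xi}^{2s} \xi^{2}/(N_1^{4s} \jb{\xi_3}^{2s})$, and combining this with the measure bound $\les M/N_1$ gives a dyadic contribution
\[
\les M \cdot \frac{N^{2s+2}}{N_1^{4s+1}\, \jb{\xi_3}^{2s}} \les M \cdot N_1^{1 - 4s}
\]
after using $N \le N_1$ and absorbing the $\jb{\xi_3}^{2s}$ weight via the fact that $\xi_3$ is pinned near $\xi$ (so that $|\xi_3| \gtrsim N$ modulo harmless corrections). This dyadic sum over $N_1 \ge 1$ is summable precisely when $s \ge \tfrac14$, with at most logarithmic losses absorbed into the $M^{0+}$ and $\jb{\al}^{0+}$ slacks. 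This is exactly the sharp regularity threshold in the statement, and it reflects the derivative in the mKdV nonlinearity---which is why Lemma~\ref{LEM:NLS1} is available for all $s \ge 0$ while Lemma~\ref{LEM:KdV1} requires $s \ge \tfrac14$.
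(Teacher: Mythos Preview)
Your reduction to the sup bound via Cauchy--Schwarz is natural, but there is a genuine gap at the endpoint $s=\tfrac14$. Your case (a) asserts that when $N_1 \sim N$, at least two of the factors $|\xi-\xi_i| = |\xi_j+\xi_k|$ are of size $\sim N_1$. This is false: when $|\xi_1|\sim|\xi_2|\sim|\xi_3|\sim|\xi|\gg1$ with $\xi_1, \xi_2 \approx -\xi_3$, one has $|\xi_1+\xi_3|,|\xi_2+\xi_3|\le 1$ while only $|\xi_1+\xi_2|\sim|\xi|$. Precisely this regime (Case~2 in the paper's proof) defeats the pure sup-bound strategy at $s=\tfrac14$: there $m^2(\bar\xi)\sim|\xi|$, and with the modulation constraint $|\zeta_1\zeta_2|\lesssim M/|\xi|$ (where $\zeta_1=\xi_2+\xi_3$, $\zeta_2=\xi_3+\xi_1$, $|\zeta_i|\le1$), the integral over $(\zeta_1,\zeta_2)$ is of order $(M/|\xi|)\log(|\xi|/M)$, so the weighted measure is $\sim M\log(|\xi|/M)$, which is unbounded in $\xi$. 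The logarithmic loss is in $|\xi|$, not in $M$ or $\jb{\al}$, and therefore cannot be absorbed into the $M^{0+}$ or $\jb{\al}^{0+}$ slack. The paper's Remark immediately after the proof says exactly this.

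What the paper does in this case is to abandon the sup bound \eqref{K3a} and estimate the quadrilinear form \eqref{K3} directly: after changing variables to $(\zeta_1,\zeta_2)$, the pairing structure ($\xi_1\approx-\xi_3$ and $\xi_2\approx\xi$) allows a Cauchy--Schwarz that produces $\sum_{N}\|\P_N v_1\|_{L^2}\|\P_N v_4\|_{L^2}$ summed over the common dyadic scale $|\xi|\sim N$, rather than a supremum over $N$. This is what recovers the endpoint. For $s>\tfrac14$ your sketch is essentially correct (and the paper confirms the sup-bound argument suffices there), but as written your argument only yields the lemma for $s>\tfrac14$, not $s\ge\tfrac14$.
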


\begin{proof}
In the following, we only present the proof of  \eqref{K1}, 
since the second estimate \eqref{K2} on the difference follows 
from \eqref{K1} and the multilinearity of $\N_{\leq M}^\al $.
By the triangle inequality: $\jb{\xi}^\s \les \jb{\xi_1}^\s\jb{\xi_2}^\s\jb{\xi_3}^\s$
for $\s \geq 0$ under $\xi = \xi_1 + \xi_2 + \xi_3$,  
it suffices to prove \eqref{K1} for $s = \frac 14$.

By duality, the desired estimate \eqref{K1} follows once we prove 
\begin{align}
\Bigg|
\intt_{\xi = \xi_1 + \xi_2 + \xi_3}
\ind_{|\Psi(\bar{\xi})-\al |\leq M}
\cdot m (\bar\xi)\prod_{j = 1}^3 v_j(\xi_j) v_4(\xi) 
d\xi_1d\xi_2d\xi\Bigg|
\lesssim 
\jb{\al }^{0+} M^{\frac{1}{2}+}
\prod_{j = 1}^4 \|v_j\|_{L^2} 
\label{K3}
\end{align}

\noi
for all non-negative functions $v_1, \cdots, v_4 \in L^2_\xi(\R)$, 
where 
the multiplier $m(\bar \xi)$ is given by 
\begin{align}
 m(\bar \xi) =m(\xi, \xi_1, \xi_2, \xi_3)
= \frac{ |\xi| \jb{\xi}^\frac{1}{4}}{ \jb{\xi_1}^\frac{1}{4}\jb{\xi_2}^\frac{1}{4}\jb{\xi_3}^\frac{1}{4}}.
\label{K3c}
\end{align}

By the standard Cauchy-Schwarz argument,  we have 
\begin{align}
\text{LHS of }\eqref{K3}
& \leq \bigg\|
\intt_{\xi = \xi_1 + \xi_2 + \xi_3} 
\ind_{|\Psi(\bar{\xi})-\al |\leq M}\cdot m(\bar \xi) 
\prod_{j = 1}^3 v_j(\xi_j)
d\xi_1d\xi_2
\bigg\|_{L^2_{\xi}}\|v_4 \|_{L^2}\notag\\
& \leq \sup_{\xi}
\bigg(\intt_{\xi = \xi_1 + \xi_2 + \xi_3} 
\ind_{|\Psi(\bar{\xi})-\al |\leq M} \cdot m^2(\bar \xi) d\xi_1d\xi_2  \bigg)^\frac{1}{2}
\prod_{j = 1}^4 \|v_j\|_{L^2}.
\label{K3b}
\end{align}

\noi
Hence, it suffices to show that 
\begin{align}
\sup_{\xi}
\bigg(\intt_{\xi = \xi_1 + \xi_2 + \xi_3} 
\ind_{|\Psi(\bar{\xi})-\al |\leq M} \cdot m^2(\bar \xi) d\xi_1d\xi_2 \bigg)^\frac{1}{2} 
\les \jb{\al }^{0+} M^{\frac{1}{2}+}.
\label{K3a}
\end{align}

\noi
In the following, we either prove \eqref{K3a}
or directly establish  \eqref{K3}.

\medskip

\noi
$\bullet$ {\bf Case 1:}  $|\xi|\les 1$.
\\
\indent
By Cauchy-Schwarz, H\"older's,  and Young's inequalities
followed by H\"older's inequality, we have
\begin{align*}
\text{LHS of }\eqref{K3}
& \les 
\bigg\|\intt_{\xi = \xi_1 + \xi_2 + \xi_3}
\prod_{j = 1}^3 \jb{\xi_j}^{-\frac{1}{4}}v_j(\xi_j)d\xi_1d\xi_2\bigg\|_{L^2_{|\xi|\les 1}}\| v_4\|_{L^2_\xi} \notag\\
& \les 
\bigg\|\intt_{\xi = \xi_1 + \xi_2 + \xi_3}
\prod_{j = 1}^3 \jb{\xi_j}^{-\frac{1}{4}}v_j(\xi_j)d\xi_1d\xi_2\bigg\|_{L^\infty_{|\xi|\les 1}}\| v_4\|_{L^2}\\
& \les 
\prod_{j = 1}^3 \|\jb{\xi_j}^{-\frac{1}{4}}v_j(\xi_j)\|_{L^\frac{3}{2}_{\xi_j}}\| v_4\|_{L^2}
 \les\prod_{j = 1}^4 \|v_j\|_{L^2}.
\end{align*}

\smallskip

In the following, we consider the case $|\xi|\gg 1$.
Without loss of generality, we assume that $|\xi_{12}| \geq |\xi_{23}|\geq |\xi_{31}|$.

\medskip

\noi
$\bullet$ {\bf Case 2:}  $|\xi|\gg 1$ and $|\xi_{31}|\leq |\xi_{23}| \leq 1$. 
\\
\indent
 In this case, we have $|\xi +\xi_3| = |\xi_{31} + \xi_{23}|  \les 1$.
Since $|\xi|\gg1$, this yields 
\begin{align}
|\xi_{12}| = |\xi - \xi_3| \sim |\xi |\gg 1.
\label{K3a1}
\end{align}

\noi
Moreover, we have 
$|\xi_1|\sim|\xi_2|\sim|\xi_3|\sim|\xi|\gg1$.
Thus, we have
\[ m(\bar \xi) \sim |\xi|^\frac{1}{2}
\]

\noi
in this case.
Let 
$\zeta_1 = \xi_{23}$, $\zeta_2 = \xi_{31}$, and $\zeta_3 = \xi_{12}$.
Then,  it follows from \eqref{Psi} that 
\begin{align}
\Psi(\bar \xi) = 3 \zeta_1 \zeta_2 \zeta_3.
\label{Psi2}
\end{align}

\noi
In the following, we freely use (partial) changes 
of variables between $\xi_1, \xi_2, \xi_3, \xi$ and $\zeta_1, \zeta_2, \zeta_3$.
Note that we have  
$|\zeta_2| \leq |\zeta_1| \leq 1 $.

\medskip

\noi
{\bf Subcase 2.a:}
$|\al| \les M$.
\\
\indent
For fixed $|\xi|\gg1 $, 
the condition 
$|\Psi(\bar{\xi})-\al |\leq M$ with \eqref{K3a1} and \eqref{Psi2} implies that 
\begin{align*}
 |\zeta_2| \leq |\zeta_1|^\frac{1}{2}|\zeta_2|^\frac{1}{2} 
\les \frac{(|\al|  + M)^\frac{1}{2}}{|\xi|^\frac{1}{2}}
\les \frac{M^\frac{1}{2}}{|\xi|^\frac{1}{2}}.
%\label{K3d}
\end{align*}

\noi
Then, by a change of variables
and Cauchy-Schwarz inequality, we have
\begin{align}
\text{LHS of }\eqref{K3}
& \les 
\sum_{\substack{N\gg 1\\\text{dyadic}}}
N^\frac{1}{2}
\int_{|\zeta_2|\les \frac{M^\frac{1}{2}}{N^\frac{1}{2}}}
\bigg(\int_{|\xi_1|\sim N}  v_1(\xi_1) v_3(-\xi_1 + \zeta_2) d\xi_1\bigg) \notag\\
& \hphantom{XXXXXXXX}
\times 
\bigg(\int_{|\xi|\sim N} v_2(\xi-\zeta_2) v_4(\xi) d\xi\bigg)
d\zeta_2 \notag\\
& \les 
M^\frac{1}{2} \|v_2\|_{L^2}\|v_3\|_{L^2}
\sum_{\substack{N\gg 1\\\text{dyadic}}}
\|\P_{N}v_1\|_{L^2}
\|\P_{N}v_4\|_{L^2}\notag\\
& \lesssim 
 M^{\frac{1}{2}}
\prod_{j = 1}^4 \|v_j\|_{L^2} ,\notag
\end{align}

\noi
yielding \eqref{K3}.
Here, $\P_N$  denotes the Littlewood-Paley projector
onto the spatial frequencies $\{|\xi|\sim N\}$.

\medskip

\noi
{\bf Subcase 2.b:}
$|\al| \gg M$.
\\
\indent
For fixed $M\geq1$, write $|\al|\sim 2^K M$ for some $K \in \NB$.
Note that  we have
\begin{align}
K \sim \log \bigg(\frac{|\al|}{M}\bigg)
\label{K4}.
\end{align}

\noi
If $|\zeta_2|\les \frac{M^\frac{1}{2}}{|\xi|^\frac{1}{2}}$, then 
we can proceed as in Subcase 2.a.
Hence, we assume that 
\[ |\zeta_1|\geq 
|\zeta_2|\gg \frac{M^\frac{1}{2}}{|\xi|^\frac{1}{2}}\]

\noi
in the following.

If $|\zeta_1|\ges \frac{|\al| + M }{M^\frac{1}{2}|\xi|^\frac{1}{2}}
\sim \frac{|\al|  }{M^\frac{1}{2}|\xi|^\frac{1}{2}}$, then 
the condition 
$|\Psi(\bar{\xi})-\al |\leq M$ implies that 
\begin{align*}
 |\zeta_2| 
\les \frac{|\al|  + M}{|\zeta_1| |\xi|}
\les \frac{M^\frac{1}{2}}{|\xi|^\frac{1}{2}},
%\label{K4a}
\end{align*}

\noi
thus reducing to the previous case.
Therefore, it remains to consider the case 
\begin{align}
 \frac{M^\frac{1}{2}}{|\xi|^\frac{1}{2}}
\ll   |\zeta_1|\ll
 \frac{|\al| }{ M^\frac{1}{2} |\xi|^\frac{1}{2}}
 \sim  \frac{2^K  M^\frac{1}{2}}{|\xi|^\frac{1}{2}},
\label{K5}
\end{align}

\noi
where $K$ satisfies \eqref{K4}.

Now, suppose that 
$   |\zeta_1|
\sim  \frac{2^k  M^\frac{1}{2}}{|\xi|^\frac{1}{2}}$
for some $1\leq k \leq K$.
Then, for fixed $\xi$ and $\zeta_1$, the condition 
$|\Psi(\bar{\xi})-\al |\leq M$ implies that 
\begin{align} 
 \frac{\al  - M}{3 |\zeta_1|}
\leq |F(\zeta_2)|
\leq \frac{\al  + M}{3|\zeta_1|}, 
\label{K6}
\end{align}

\noi
where $F(\zeta_2)$ is defined by 
\begin{align}
F(\zeta_2) =   \zeta_2^2 - (2\xi - \zeta_1) \zeta_2.
\label{K6a}
\end{align}

\noi
Note that the graph of 
$F(\zeta_2)$ is a parabola with a vertex
$\sim (\xi, -\xi^2)$ in view of $|\zeta_1| \leq 1 \ll |\xi|$.
In particular, 
the slope of this parabola when $ |\zeta_2|\leq 1$
is $- 2\xi + O(1)$.
Hence, it follows from \eqref{K6} and the assumption on the size of $|\zeta_1|$
that 
$\zeta_2$ belongs to an interval $I_k = I_k(\zeta_1, \xi)$ of length
\begin{align} 
| I_k(\zeta_1, \xi)|\sim \frac{ M}{|\zeta_1||\xi|}\sim \frac{ M^\frac{1}{2}}{2^k |\xi|^\frac{1}{2}}.
\label{K7}
\end{align}

\noi
Then, from \eqref{K5} and \eqref{K7}, we obtain 
\begin{align}
\text{LHS of }\eqref{K3a}
& \les \sup_{\xi}|\xi|^\frac{1}{2}
\bigg(\sum_{k = 1}^K
\int_{|\zeta_1|
\sim  \frac{2^k  M^\frac{1}{2}}{|\xi|^\frac{1}{2}}}
\int_{\zeta_2\in I_k(\zeta_1, \xi)}
1\,  d\zeta_2 d\zeta_1 \bigg)^\frac{1}{2} \notag\\
& \les \sup_{\xi}|\xi|^\frac{1}{4} M^\frac{1}{4}
\bigg(\sum_{k = 1}^K
\int_{|\zeta_1|
\sim  \frac{2^k  M^\frac{1}{2}}{|\xi|^\frac{1}{2}}}
2^{-k} 
 d\zeta_1 \bigg)^\frac{1}{2} \notag\\
& \les K^\frac{1}{2}  M^\frac{1}{2} 
\les \jb{\al}^{0+} M^\frac{1}{2}, \notag
\end{align}

\noi
where the last inequality follows from 
 \eqref{K4}.

\medskip

\noi
$\bullet$ {\bf Case 3:}  $|\xi|\gg 1$ and $|\xi_{31}| \leq 1 < |\xi_{23}|\leq |\xi_{12}|$. 
\\
\indent
In this case, we have $|\xi_2|\sim |\xi| \gg 1$
and $\jb{\xi_1} \sim \jb{\xi_3}$.
Thus, we have
\begin{align}
m(\bar \xi) \sim \frac{|\xi|}{\jb{\xi_1}^{\frac 12}}.
\label{K8}
\end{align}

\medskip

\noi
{\bf Subcase 3.a:}
$|\xi_1| \ges |\xi|$.
\\
\indent
Since $|\xi| \gg 1 \geq |\xi_{31}| = |\xi- \xi_2|$, we have
$|\xi_{23} + \xi_{12}| = |\xi+ \xi_2| \sim |\xi| $.  
By the triangle inequality
with $|\xi_{23}|\leq |\xi_{12}|$,  we have $|\xi_{12}| \ges |\xi|\gg 1$.
Let $F(\zeta_2)$ be as in \eqref{K6a}.
Then, noting that 
\[ F'(\zeta_2)  = 2\zeta_2 - 2\xi + \zeta_1
= - \xi_{12} + \zeta_2 = - \xi_{12} + O(1),\]

\noi
it follows from \eqref{K6} that
$\zeta_2$ belongs to an interval $I = I(\zeta_1, \xi)$ of length
\begin{align} 
| I(\zeta_1, \xi)|\les \frac{ M}{|\zeta_1||\xi|}
\leq \frac{ M}{|\xi|}
\label{K9}
\end{align}

\noi
for each fixed $\xi$ and $\zeta_1$
and hence for each fixed $\xi$ and $\xi_1 = \xi - \zeta_1$.
Given $k \in \Z$, let   $\Pi_k$ be the  frequency projector onto the interval $[k, k+1)$
defined in \eqref{K9a}.
Then, 
using a variant of the Cauchy-Schwarz argument \eqref{K3b}
with \eqref{K8} and \eqref{K9}, 
we have
\begin{align}
\text{LHS of }\eqref{K3}
& \leq \bigg\|\sum_{|k| \gg 1} 
\int_{|\xi_1| \in [k, k+1)}
\int_{|\zeta_2|\leq 1}
\ind_{|\Psi(\bar{\xi})-\al |\leq M}\cdot m(\bar \xi) 
\notag\\
& \hphantom{XXXXXXX}
\times v_1(\xi_1) v_2(\xi - \zeta_2) v_3 ( -\xi_1 + \zeta_2)
d\zeta_2 d\xi_1 
\bigg\|_{L^2_{\xi}}\|v_4 \|_{L^2}\notag\\
& \leq \sup_{|k|\gg 1} \sup_{\xi}
\bigg(
\int_{|\xi_1| \in [k, k+1)}
\int_{\zeta_2 \in I(\zeta_1, \xi)}
\ind_{|\Psi(\bar{\xi})-\al |\leq M} \cdot m^2(\bar \xi) d\zeta_2 d\xi_1 \bigg)^\frac{1}{2}
\notag\\
& \hphantom{XXXXXXX}
\times \sum_{|k|\gg 1} \sum_{\l = 0}^2
\|\Pi_k v_1\|_{L^2}\|\Pi_{-k  -\l} v_3\|_{L^2}\|v_2\|_{L^2}\|v_4\|_{L^2}\notag\\
& \les M ^\frac{1}{2}
\prod_{j = 1}^4 \|v_j\|_{L^2}.
\label{K10}
\end{align}

\noi
{\bf Subcase 3.b:}
$|\xi_1| \ll |\xi|$.
\\
\indent
In this case, we have $|\zeta_1| \sim |\xi|$.
Then, arguing as in Subcase 3.a, we conclude that 
$\zeta_2$ belongs to an interval $I = I(\zeta_1, \xi)$ of length
\begin{align*} 
| I(\zeta_1, \xi)|\les \frac{ M}{|\zeta_1||\xi|}
\sim \frac{ M}{|\xi|^2}
\end{align*}

\noi
for each fixed $\xi$ and $\zeta_1=\xi - \xi_1$.
In particular, we have 
\begin{align*}
  \sup_{|k|\gg 1} \sup_{\xi}
\bigg(
\int_{|\xi_1| \in [k, k+1)}
\int_{\zeta_2 \in I(\zeta_1, \xi)}
\ind_{|\Psi(\bar{\xi})-\al |\leq M} \cdot m^2(\bar \xi) d\zeta_2 d\xi_1 \bigg)^\frac{1}{2}
 \les M ^\frac{1}{2}.
\end{align*}

\noi
The rest follows as in \eqref{K10}.

\medskip

\noi
$\bullet$ {\bf Case 4:}  $|\xi|\gg 1$ and $ |\xi_{12}|,  |\xi_{23}|,  |\xi_{31}| > 1$. % \leq |\xi_{2-3}|)\leq 1$.
\\
\indent
Noting that 
$\max(  |\xi_{12}|,  |\xi_{23}|,  |\xi_{31}| )\ges |\xi|\gg 1$,  
the condition 
$|\Psi(\bar{\xi})-\al |\leq M$ with \eqref{Psi} implies that 
\begin{align}
|\al| + M \ges \max( |\xi|, |\xi_{12}|,  |\xi_{23}|,  |\xi_{31}|).
\label{K10a}
\end{align}

\noi
In the following, the size relation of 
$ |\xi_{12}|,  |\xi_{23}|,  |\xi_{31}|$ does not play any role.
Without loss of generality, assume that 
$|\xi_1|\ge |\xi_2|\ge |\xi_3|$.

\medskip

\noi
{\bf Subcase 4.a:}
$|\xi_1| \sim |\xi| \gg |\xi_2|\geq  |\xi_3|$.
\\
\indent
In this case, 
by viewing $\Psi$ as a function of $\xi_2$  for fixed  $\xi$ and $\xi_3$, 
we have $|\dd_{\xi_2} \Psi (\bar \xi)| \sim |(\xi - \xi_3) (\xi - 2\xi_2 - \xi_3)| 
= |\xi_{12}||\xi_{1-2}|\ges |\xi|^2\gg 1$.
Hence, with \eqref{K10a}, we have
\begin{align*}
\text{LHS of } \eqref{K3a} 
& \les \sup_{\xi}
\bigg(\intt_{\xi = \xi_1 + \xi_2 + \xi_3} 
\ind_{|\Psi(\bar{\xi})-\al |\leq M}
\frac{|\xi|^2}{\jb{\xi_2}^\frac{1}{2}\jb{\xi_3}^\frac{1}{2}}d\xi_2 d\xi_3 \bigg)^\frac{1}{2} 
\\
& \les M^\frac{1}{2}
\bigg(\int_{|\xi_3|\ll |\xi|} %t_{\xi = \xi_1 + \xi_2 + \xi_3} 
\frac{1}{\jb{\xi_3}}d\xi_3 \bigg)^\frac{1}{2} 
\les M^\frac{1}{2}(\log |\xi|)^\frac{1}{2}
\les \jb{\al }^{0+} M^{\frac{1}{2}+},
\end{align*}

\noi
yielding \eqref{K3a}.

\medskip

\noi
{\bf Subcase 4.b:}
$|\xi_1|, |\xi_2|  \ges |\xi| \gg  |\xi_3|$.

Note that, in the first step of \eqref{K3b}, 
we can perform Cauchy-Schwarz inequality in $\xi_2$ instead of $\xi$. 
Then, \eqref{K3} follows once we prove
\begin{align}
\sup_{\xi_2}
\bigg(\intt_{\xi = \xi_1 + \xi_2 + \xi_3} 
\ind_{|\Psi(\bar{\xi})-\al |\leq M} \cdot m^2(\bar \xi)d\xi_1 d\xi_3 \bigg)^\frac{1}{2} 
\les \jb{\al }^{0+} M^{\frac{1}{2}+}.
\label{K10a1}
\end{align}

If $|\xi_1|\sim|\xi_2|\gg|\xi|\gg|\xi_3|$, then $|\xi+\xi_1|\sim |\xi_2|$, and $|\xi_{23}|\sim|\xi_2|$.
Then, 
by viewing $\Psi$ as a function of $\xi_1$
for fixed  $\xi_2$ and $\xi_3$, 
we have  
\begin{align}
|\dd_{\xi_1} \Psi (\bar \xi)| = |\xi_{23} (\xi + \xi_1)| \ges |\xi_2|^2
\label{K10a2}
\end{align}

\noi
 and thus
\begin{align}
\text{LHS of } \eqref{K10a1} 
& \les \sup_{\xi_2}
\bigg(\intt_{\xi = \xi_1 + \xi_2 + \xi_3} 
\ind_{|\Psi(\bar{\xi})-\al |\leq M}
\frac{|\xi|^\frac{3}{2}}{\jb{\xi_3}^\frac{1}{2}}d\xi_1 d\xi_3 \bigg)^\frac{1}{2} 
\notag \\
& \les M^\frac {1}{2}
\sup_{\xi_2} 
\frac{1}{\jb{\xi_2}^\frac{1}{4}}
\bigg(\int_{|\xi_3|\ll |\xi_2|}
\frac{1}{\jb{\xi_3}^\frac{1}{2}}d\xi_3 \bigg)^\frac{1}{2} 
\les M^\frac{1}{2}.
\label{K10a3}
\end{align}

If $|\xi_1|\sim|\xi_2|\sim|\xi|\gg|\xi_3|$, we have
\begin{align*}
 \max(|\xi + \xi_1|, |\xi + \xi_2|)
\ges |2\xi + \xi_{12}| =|3\xi-\xi_3| \sim |\xi|.
\end{align*}

\noindent
Without loss of generality, assume that 
$|\xi + \xi_1|\ges  |\xi|$. (Otherwise, we switch the role of $\xi_1$ and $\xi_2$ in \eqref{K10a1}.)
Then, \eqref{K10a2} and hence \eqref{K10a3} hold in this case as well.

\medskip

\noi
{\bf Subcase 4.c:}
$|\xi_1|, |\xi_2| , |\xi_3| \ges |\xi|  $.
\\
\indent
In this case, we have
\begin{align}
\max(|\xi + \xi_1|, |\xi + \xi_2|, |\xi + \xi_3|)
\ges |3\xi + \xi_{123}| =4  |\xi|.
\label{K10b}
\end{align}

\noi
Without loss of generality, assume that 
$|\xi + \xi_1|\ges  |\xi|$.
Then, 
by viewing $\Psi$ as a function of $\xi_1$
for fixed  $\xi_2$ and $\xi_3$,  
 we have 
\begin{align}
|\dd_{\xi_1} \Psi (\bar \xi)| \sim |\xi_{23} (\xi + \xi_1)| \ges |\xi| |\xi_{23}|.
\label{K11}
\end{align}

\noi 
%Note that we can permute any of $\{ \xi_1, \xi_2, \xi_3, -\xi\}$
%in \eqref{K3b}. 
Note that, 
By  performing Cauchy-Schwarz inequality in $\xi_3$ instead of $\xi$
in the first step of \eqref{K3b}, 
it suffices to prove
\begin{align}
\sup_{\xi_3}
\bigg(\intt_{\xi = \xi_1 + \xi_2 + \xi_3} 
\ind_{|\Psi(\bar{\xi})-\al |\leq M} \cdot m^2(\bar \xi)d\xi_1 d\xi_2 \bigg)^\frac{1}{2} 
\les \jb{\al }^{0+} M^{\frac{1}{2}+}.
\label{K12}
\end{align}

\noi
From \eqref{K11} and \eqref{K10a}, we have
\begin{align*}
\text{LHS of } \eqref{K12} 
& \les \sup_{\xi_3}
\bigg(\intt_{\xi = \xi_1 + \xi_2 + \xi_3} 
\ind_{|\Psi(\bar{\xi})-\al |\leq M}
|\xi| d\xi_1 d\xi_2 \bigg)^\frac{1}{2} 
\\
& \les M^\frac{1}{2}
 \sup_{\xi_3}
\bigg(\int_{1\leq |\xi_{23}|\les |\al| + M }
\frac{1}{|\xi_{23}|}d\xi_2 \bigg)^\frac{1}{2} 
\les \jb{\al }^{0+} M^{\frac{1}{2}+}.
\end{align*}

\noi
This completes the proof of Lemma \ref{LEM:KdV1}.
\end{proof}

\begin{remark}\rm
While 
the simple  Cauchy-Schwarz argument \eqref{K3b}
works for 
most of the cases 
in the proof of Lemma \ref{LEM:KdV1},
it does not seem to work for  Case 2 in the endpoint case: $s = \frac 14$.
We point out  that 
the  Cauchy-Schwarz argument  suffices for Case 2
in the non-endpoint case: $s > \frac 14$.

\end{remark}

As an immediate corollary to Lemma \ref{LEM:KdV1}, we 
obtain the following lemma.
The proof is analogous to that of Lemma \ref{LEM:NLS2}.

\begin{lemma}\label{LEM:KdV2}
Let $s \geq \frac 14$.
Then, we have
\begin{align*}
\|\I^\al _{ M}(v)\|_{H^s}& \lesssim \jb{\al }^{0+} M^{-\frac{1}{2}+}\|v\|_{H^s}^3,\\
\|\I^\al _{M}(v)-\I^\al _{ M}(w)\|_{H^s} 
& \lesssim \jb{\al }^{0+} M^{-\frac{1}{2}+}
 \big(\|v\|_{H^s}^2+\|w\|_{H^s}^2\big)\|v-w\|_{H^s}, 
\intertext{and} 
\|\I^\al _{> M}(v)\|_{H^s}& \lesssim \jb{\al }^{0+} M^{-\frac{1}{2}+}\|v\|_{H^s}^3,\\
\|\I^\al _{> M}(v)-\I^\al _{>  M}(w)\|_{H^s} 
& \lesssim \jb{\al }^{0+} M^{-\frac{1}{2}+}
 \big(\|v\|_{H^s}^2+\|w\|_{H^s}^2\big)\|v-w\|_{H^s},
\end{align*}

\noi
for any $M \geq 1$ and $\al  \in \R$.

\end{lemma}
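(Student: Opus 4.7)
The plan is to mimic the proof of Lemma \ref{LEM:NLS2} essentially verbatim, since the only difference between the NLS and mKdV settings at this level of abstraction is the replacement of $\Phi$ by $\Psi$ and the presence of the derivative factor $\xi$, both of which have already been absorbed into the bound of Lemma \ref{LEM:KdV1}. The crucial observation (exactly as in the NLS case) is that the proof of Lemma \ref{LEM:KdV1} nowhere exploits the oscillation of $e^{i\Psi(\bar\xi)t}$; only the pointwise bound $|e^{i\Psi(\bar\xi)t}| = 1$ is used, so the estimate \eqref{K1} is in fact an estimate on the absolute value of the trilinear convolution.

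For $\I^\al_M(v)$, I would first note that on the region of integration $|\Psi(\bar\xi)-\al|\sim M$ the weight $|\Psi(\bar\xi)-\al|^{-1}$ is of size $\sim M^{-1}$, so that
\[
\|\I^\al_M(v)\|_{H^s} \les M^{-1}\bigl\|\N^\al_M(v)\bigr\|_{H^s},
\]
where, more precisely, the above absolute-value interpretation of Lemma \ref{LEM:KdV1} is what is being invoked. Writing $\N^\al_M = \N^\al_{\leq 2M} - \N^\al_{\leq M}$ and applying \eqref{K1} to each piece yields $\|\N^\al_M(v)\|_{H^s}\les \jb{\al}^{0+}M^{1/2+}\|v\|_{H^s}^3$, so altogether
\[
\|\I^\al_M(v)\|_{H^s} \les \jb{\al}^{0+}M^{-1/2+}\|v\|_{H^s}^3,
\]
which is the first claim. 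For $\I^\al_{>M}$, I would perform a dyadic decomposition of the modulation region,
\[
\I^\al_{>M}(v) = \sum_{\substack{N\geq M\\ \text{dyadic}}} \I^\al_N(v),
\]
apply the bound just proved to each summand, and sum the resulting geometric series $\sum_{N\geq M} N^{-1/2+}\les M^{-1/2+}$.

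The Lipschitz-type difference estimates follow directly from the multilinearity of $\I^\al_M$ and $\I^\al_{>M}$ by writing $v^{\otimes 3}-w^{\otimes 3}$ as a telescoping sum and applying the same bounds to each term, picking up the factor $(\|v\|_{H^s}^2+\|w\|_{H^s}^2)\|v-w\|_{H^s}$ in place of $\|v\|_{H^s}^3$. I do not anticipate any genuine obstacle here: all the analytic work has already been done in Lemma \ref{LEM:KdV1}, and what remains is a purely formal conversion using the modulation size together with a dyadic geometric summation. The only point requiring mild care is to verify that the proof of Lemma \ref{LEM:KdV1} truly does not use the oscillatory factor, which is clear from the fact that every estimate in that proof begins by bounding the integrand by its absolute value (cf.\ the passage \eqref{K3b}).
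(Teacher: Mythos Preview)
Your proposal is correct and is exactly the approach the paper takes: the paper states Lemma~\ref{LEM:KdV2} as an immediate corollary of Lemma~\ref{LEM:KdV1} with proof ``analogous to that of Lemma~\ref{LEM:NLS2},'' which is precisely the argument you have written out (extract the factor $M^{-1}$ on the region $|\Psi(\bar\xi)-\al|\sim M$, apply \eqref{K1}, then dyadically sum for $\I^\al_{>M}$, with the difference estimates following by multilinearity).
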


\section{Normal form reductions}\label{SEC:3}
In this section,
we implement an infinite iteration scheme of normal form reductions at a formal level.
We perform normal form reductions in an iterative manner, 
transforming   part of the nonlinearity 
into nonlinearities of higher and higher degrees.
In the end, we formally arrive at an equation 
involving infinite series of nonlinearities of arbitrarily high  degrees
(Subsection \ref{SUBSEC:3.4}).

Such an infinite iteration of normal form reductions 
was first introduced in Guo-Kwon-Oh~\cite{GKO} 
in proving unconditional well-posedness of  the  cubic NLS on $\T$. 
 While the implementation of normal form reductions  in \cite{GKO}
 was systematic,  
 the multilinear estimates heavily depended on 
 the structure of the equation
 as well as some elementary number theory (the divisor counting argument).
In the following, 
we perform normal form reductions
in a rather abstract manner.
This allows us to handle the cubic NLS~\eqref{NLS} and the mKdV~\eqref{mKdV}
in an identical manner
by applying the localized modulation estimates obtained in  Section~\ref{SEC:2}.

Before proceeding further, 
we need to set up some notations.
In the following, 
we simply denote the Fourier coefficient 
  $v(\xi) = \ft v(\xi)$ by $v_\xi$.
When the complex conjugate sign on  $v_{\xi}$ does not play any significant role,
 we drop the complex conjugate sign.
We often drop the complex number $i$
and simply use $1$ for $\pm 1$ and $\pm i$.

In the following presentation of normal form reductions,
we restrict our attention 
to the cubic NLS \eqref{NLS}.
In view of the localized modulation estimates (Lemmas \ref{LEM:KdV1}
and  \ref{LEM:KdV2}), 
one can easily modify the argument to handle the mKdV \eqref{mKdV}.
All the computations in this section (such as
switching summations and integrals) are formal, assuming that $u$
(and hence $v$)
is a smooth solution.
In Section \ref{SEC:4}, we justify our formal computations
when $ u \in C_t H^s_x$
with (i)  $ s\geq \frac{1}{6}$
for the cubic  NLS
and (ii) $ s> \frac{1}{4}$ for the mKdV, respectively.

\subsection{Notation: index by trees}

When we apply a normal form reduction, 
i.e.~integration by parts as in \eqref{I3},\footnote{In fact, 
we proceed without an integration symbol in the following. 
Namely, we perform differentiation by parts.}
a time derivative can fall on any of the factors $v_{\xi_j}$,
transforming the nonlinearity into that of a higher degree.
In each step of normal form reductions, 
we need to keep track of  
where a time derivative falls.
which may be a cumbersome task in general.
In \cite{GKO}, 
we introduced the notion of {\it ordered trees} for
 indexing such terms arising in the general steps of normal form reductions.
In order to carry out our analysis, we
will need to supplement more notations related to ordered trees
in the following.

\begin{definition} \rm
Given a partially ordered set $\TT$ with partial order $\leq$, we say that $b\in\TT$ with $b\leq a$ and $b\neq a$ is a child of $a\in\TT$, if $b\leq c\leq a$ implies either $c=a$ or $c=b$. If the latter condition holds, we also say that $a$ is the parent of $b$.
\end{definition}

As in \cite{Christ, Oh}, 
our trees refer to a particular subclass of ternary trees.

\begin{definition} \rm
A tree $\TT$ is a finite partially ordered set satisfying the following properties:
\begin{itemize}
\item[$\bullet$]
Let $a_1$, $a_2$, $a_3$, $a_4\in\TT$. If $a_4\leq a_2\leq a_1$ and $a_4\leq a_3\leq a_1$, then we have $a_2\leq a_3$ or $a_3\leq a_2$.

\item[$\bullet$]
 A node $a\in\TT$ is called terminal, if it has no child. A non-terminal node $a\in\TT$ is a node with exactly three children denoted by $a_1$, $a_2$ and $a_3$.\footnote{Note
that the order of children plays an important role in our discussion.
We refer to $a_j$ as the $j$th child of a non-terminal node $a \in \TT$.
In terms of the planar graphical representation of a tree, 
we set  the $j$th node from the left as the $j$th child $a_j$ of $a \in \TT$. 
}

\item[$\bullet$]
 There exists a maximal element $r\in\TT$ (called the root node) such that $a\leq r$ for all $a\in\TT$. We assume that the root node is non-terminal.

\item[$\bullet$]
 $\TT$ consists of the disjoint union of $\TT^0$ and $\TT^\infty$, where $\TT^0$ and $\TT^\infty$ denote the collection of non-terminal nodes and terminal nodes, respectively.
\end{itemize}
\end{definition}

Note that the number $|\TT|$ of nodes in a tree $\TT$ is $3j+1$ for some $j\in\mathbb{N}$, 
where $|\TT_0|=j$ and $|\TT^\infty|=2j+1$. 
We use $T(j)$ to denote the collection of trees of the $j$th generation, 
namely,  with $j$ parental nodes.

Next, we recall the notion of ordered trees introduced in \cite{GKO}. 
Roughly speaking, an ordered tree ``remembers how it grew''.
\begin{definition}\label{DEF:tree2}\rm
We say that a sequence $\{\TT_j\}_{j=1}^J$ is a chronicle of $J$ generations, if
\begin{itemize}
\item[$\bullet$]
 $\TT_j\in T(j)$ for each $j=1$, $\cdots$, $J$,

\item[$\bullet$]
 $\TT_{j+1}$ is obtained by changing one of the terminal nodes in $\TT_j$,
denoted by $p^{(j)}$, 
 into a non-terminal node (with three children), $j=1$, $\cdots$, $J-1$.

\end{itemize}

\noi
Given a chronicle $\{\TT_j\}_{j=1}^J$ of $J$ generations, 
we refer to $\TT_J$ as an {\it ordered tree of the $J$th generation}. 
We use $\TF(J)$ to denote the collection of the ordered trees of the $J$th generation.
 Note that the cardinality of $\TF(J)$ is given by
	\begin{equation}\label{cj}
	|\TF(J)|=1\cdot 3\cdot 5\cdot\cdots\cdot (2J-1)=: c_J
	\end{equation}

\end{definition}

\begin{remark}\rm

Given two ordered trees $\TT_J$ and $\wt{\TT}_J$
of the $J$th generation, 
it may happen that $\TT_J = \wt{\TT}_J$ as trees (namely as graphs) 
while $\TT_J \ne \wt{\TT}_J$ as ordered trees according to Definition \ref{DEF:tree2}.
Henceforth, when we refer to an ordered tree $\TT_J$ of  the $J$th generation, 
it is understood that there is an underlying chronicle $\{ \TT_j\}_{j = 1}^J$.

\end{remark}

\begin{definition} \label{DEF:tree3}\rm
(i) Given an ordered tree $\TT_J \in \TF(J)$ 
with a chronicle $\{\TT_j\}_{j = 1}^J$, 
we define a ``projection'' $\pi_j$, $j = 1, \dots, J$,  from $\TT_J$
to subtrees in $\TT_J$ of one generation 
by setting
\begin{itemize}
\item[$\bullet$]
 $\pi_1(\TT_J) = \TT_1$, 

\item[$\bullet$] 
$\pi_j(\TT_J)$ to be the tree formed by the three terminal nodes in $\TT_j \setminus \TT_{j-1}$
and its parent,  $j = 2, \dots, J$.   
Intuitively speaking, 
$\pi_j(\TT_J)$ is the tree added in transforming $\TT_{j-1}$ into $\TT_j$.

\end{itemize}

\noi
We use $r^{(j)}$ to denote the root node of $\pi_j(\TT_J)$
and refer to it as the {\it $j$th root node}.
By definition, we have
\begin{align}
r^{(j)} = p^{(j-1)}.
\label{tree3}
\end{align}

\noi
Note that
$p^{(j-1)}$ is not necessarily a node in 
$\pi_{j-1}(\TT_J)$.

\smallskip
\noi
(ii) Given $j \in \{1, \dots, J-1\}$, 
$p^{(j)}$  appears as a terminal node of  $\pi_{k}(\TT)$
for exactly one $k \in \{1, 2\dots, j-1\}$.
In particular, 
$p^{(j)}$ is the $\l$th child of the $k$th root note $r^{(k)}$
for some $\l\in \{1, 2, 3\}$.
We define {\it the order of $p^{(j)}$}, denoted by 
$\#p^{(j)}$, 
to be this number $\l\in \{1, 2, 3\}$.

\smallskip
\noi
(iii) 
We  define the {\it essential terminal nodes} $\pi_j^\infty(\TT_J)$ of the $j$th generation by 
setting
\[\pi_j^\infty(\TT_J): = \pi_j(\TT_J)^\infty \cap\TT_J^\infty
= (\TT_j \setminus \TT_{j-1})\cap \TT_J^\infty.\]

\noi
By definition, 
$\pi_j^\infty(\TT_J)$ may be empty.
Note that $\{ \pi_j^\infty(\TT_J)\}_{j = 1}^J$ forms 
a partition of $\TT_J^\infty$.
\end{definition}

We record the following simple observation.
This will be useful in Subsections \ref{SUBSEC:3.3}
and~\ref{SUBSEC:4.3}.

\begin{remark}\label{REM:order}\rm
Let $\TT \in \TF(J)$ be an ordered tree. 
Then, for each fixed  $j = 2, \dots, J$, 
there exists a path\footnote{A path is  a sequence of nodes $a_1, a_2, \dots, a_K$
such that 
$a_k$ and $a_{k+1}$ are adjacent.}
$a_1, a_2, \dots, a_K$,
starting at the root node $r = r^{(1)}$
and ending at 
the $j$th root node $r^{(j)}$
such that $a_k \ne r^{(\l)}$
for any $k = 1, \dots, K$
and $\l \geq  j+1$.
Namely, we can move from $r^{(1)}$
to $r^{(j)}$
without hitting a root node of a higher generation.

More concretely, given $r^{(j)}$, 
we know that it  appears as a terminal node of  $\pi_{j_1}(\TT)$
for exactly one $j_1 \in \{1, 2\dots, j-1\}$.
Similarly,  $r^{(j_1)}$ appears as a terminal node of  $\pi_{j_2}(\TT)$
for exactly one $j_2 \in \{1, 2\dots, j_1-1\}$.
We can iterate this process, which must terminate in a finite number of steps
with $j_k = 1$.
This generates the shortest path
$r^{(j_k)}, r^{(j_{k-1})}, \dots, r^{(j_1)}, r^{(j)}$
from $r^{(1)}$
to $r^{(j)}$
and we denote it by $P(r^{(1)}, r^{(j)})$.
Similarly, given $a\in \TT\setminus\{r^{(1)}\}$, 
one can easily construct the shortest path from $r^{(1)}$ to $a$
since $a$ is a terminal node of $\pi_k(\TT)$ for some $k$.
We denote this shortest path by 
$P(r^{(1)}, a)$.

\end{remark}

Given an ordered tree, 
we need to 
consider all possible frequency assignments
to nodes that are ``consistent''.

\begin{definition}\label{DEF:index}
\rm
Given an ordered tree $\TT \in \TF(J)$,  
we define an {\it index function} $\pmb{\xi}:\TT\to\mathbb{R}$ such that
\begin{align}
\xi_a=\xi_{a_1}-\xi_{a_2}+\xi_{a_3}
\label{index}
\end{align}

\noi
for $a\in\TT^0$, where $a_1$, $a_2$, and $a_3$ denote the children of $a$.
Here,  we identified $\pmb{\xi}:\TT\to\R$ with $\{\xi_a\}_{a\in\TT}\in\mathbb{R}^\TT$. We use $\Xi(\TT)\subset\R^{\TT}$ to denote the collection of such index functions $\pmb{\xi}$.
\end{definition}

\begin{remark}\rm 
(i) If we associate functions $v_a = v_a(\xi_a)$ to each node $a \in \TT$, 
then the relation~\eqref{index} implies that $v_a = v_{a_1}* \cj{v_{a_2}}*v_{a_3}$.

\smallskip

\noi
(ii) For the mKdV, we need to replace \eqref{index} 
by $\xi_a=\xi_{a_1}+\xi_{a_2}+\xi_{a_3}$.

\end{remark}

Given an ordered tree 
$\TT_J \in \TF(J)$  with a chronicle $\{ \TT_j\}_{j = 1}^J$ 
and associated index functions $\pmb{\xi} \in \Xi(\TT_J)$,
 we use superscripts to keep track of  ``generations'' of frequencies.

Consider $\TT_1$ of the first generation.
We define the first generation of frequencies by
\[\big(\xi^{(1)}, \xi^{(1)}_1, \xi^{(1)}_2, \xi^{(1)}_3\big) :=(\xi_r, \xi_{r_1}, \xi_{r_2}, \xi_{r_3}),\]

\noi
where $r_j$ denotes the three children of the root node $r$.

In general, the ordered tree $\TT_j$ 
of the $j$th generation is obtained from $\TT_{j-1}$ by
changing one of its terminal nodes $a  \in \TT^\infty_{j-1}$
into a non-terminal node.
Then, we define
the $j$th generation of frequencies by
\[\big(\xi^{(j)}, \xi^{(j)}_1, \xi^{(j)}_2, \xi^{(j)}_3\big) :=(\xi_a, \xi_{a_1}, \xi_{a_2}, \xi_{a_3}),\]

\noi
\noi
where $a_j$ denotes the three children of the  node  $a  \in \TT^\infty_{j-1}$.
Note that the parent node $a$ is nothing but the $j$th root node $r^{(j)}$
defined in Definition \ref{DEF:tree3}.

\medskip

Our main analytical tool is the localized modulation estimates
from Section \ref{SEC:2}.
Hence, it is important to keep track of the modulation 
for frequencies in  each generation.
We use $\mu_j$  to denote the corresponding modulation function  introduced at the $j$th generation.
Namely, we set\footnote{For the mKdV, 
the modulation function $\mu_j$ is given by 
\begin{align*}
\mu_j 
:= \big(\xi^{(j)}\big)^3 - \big(\xi_1^{(j)}\big)^3 - \big(\xi_2^{(j)}\big)^3- \big(\xi_3^{(j)}\big)^3.
\end{align*}
}
\begin{align*}
\mu_j & = \mu_j \big(\xi^{(j)}, \xi^{(j)}_1, \xi^{(j)}_2, \xi^{(j)}_3\big)
:= \big(\xi^{(j)}\big)^2 - \big(\xi_1^{(j)}\big)^2 + \big(\xi_2^{(j)}\big)^2- \big(\xi_3^{(j)}\big)^2 \notag \\
& = 2\big(\xi_2^{(j)} - \xi_1^{(j)}\big) \big(\xi_2^{(j)} - \xi_3^{(j)}\big)
= 2\big(\xi^{(j)} - \xi_1^{(j)}\big) \big(\xi^{(j)} - \xi_3^{(j)}\big), 
%\label{mu}
\end{align*}

\noi
where the last two equalities hold in view of \eqref{index}.
We also use the following shorthand notation:
\[\wt \mu_j := \sum_{k = 1}^j \mu_k.\]

\subsection{Normal form reductions: second and third generations} 
\label{SUBSEC:NF2}

We are now ready to  perform normal form reductions.
 As we mentioned earlier, 
 we only consider the cubic NLS \eqref{NLS} in $H^s(\R)$, $s \geq 0$, 
 in the following.
 Since  our implementation  is carried out at an abstract level, 
a minor modification suffices
for the  mKdV in $H^s(\R)$,  $ s\ge \frac14 $.

Fix dyadic $N > 1$ (to be determined later).
We first write \eqref{NLS2} as
\begin{align*}
\partial_t v=\N(v)
& =\N_{\leq N}(v)+\N_{> N }(v)\notag\\
& =: \N_{1}^{(1)}(v) + \N_{2}^{(1)}(v).
\end{align*}

\noi
By Lemma \ref{LEM:NLS1}, 
we can estimate the low modulation part:
\begin{align}
\| \N_{1}^{(1)}(v) \|_{H^s} =  \|\N_{\le N} (v) \|_{H^s} \lesssim N^{\frac 12+} \|v\|_{H^s}^3
\label{1g}
 \end{align}

\noi
for $s\geq 0$.
The main point is that the restriction 
$|\Phi(\bar \xi) |\leq N$
provides a restriction on the possible range of frequencies.

The high modulation part $\N_{2}^{(1)}(v) = \N_{> N }(v)$ with $|\Phi(\bar \xi) |> N$
can not benefit such a frequency restriction.
In this case, 
we exploit 
a rapid oscillation due to the high modulation, 
introducing cancellation under a time integration.
For this purpose, we iteratively apply differentiation by parts
and transform $\N_{2}^{(1)}(v)$
into  infinite series of multilinear terms.

Let $C_0$ denote the domain of  $\N_{2}^{(1)}(v) = \N_{>N}(v)$:
\begin{align}
C_0: =\big\{|\mu_1|> N \big\}.
\label{C0}
\end{align}

\noi
By taking differentiation by parts\footnote{When we apply differentiation by parts,
we keep the minus sign on the second term
for emphasis.}
 with \eqref{NLS2}, we have
\begin{align}
\N^{(1)}_2(v)(\xi,t)
& =\N_{> N }(v)(\xi,t)\notag \\
&=\intt_{\substack{\pmb{\xi}\in\Xi(\TT_1)\\\pmb{\xi}_r=\xi}}\ind_{C_0} e^{-i\mu_1 t}
\prod_{a\in\TT_1^\infty}v_{\xi_a}\notag \\
&=\dt\Bigg[\intt_{\substack{\pmb{\xi}\in\Xi(\TT_1)\\\pmb{\xi}_r=\xi}}
\ind_{C_0} \frac{e^{-i\mu_1 t}}{\mu_1}\prod_{a\in\TT_1^\infty}v_{\xi_a}\Bigg]
-\intt_{\substack{\pmb{\xi}\in\Xi(\TT_1)\\\pmb{\xi}_r=\xi}} 
\ind_{C_0} \frac{e^{-i\mu_1t}}{\mu_1}
\dt\bigg(\prod_{a\in\TT_1^\infty}v_{\xi_a}\bigg)\notag \\
&=\partial_t\Bigg[\intt_{\substack{\pmb{\xi}\in\Xi(\TT_1)\\\pmb{\xi}_r=\xi}}
\ind_{C_0}
\frac{e^{-i\mu_1 t}}{\mu_1}\prod_{a\in\TT_1^\infty}v_{\xi_a}\Bigg]
-\sum_{\TT_2\in\TF(2)}
\intt_{\substack{\pmb{\xi}\in\Xi(\TT_2)\\\pmb{\xi}_r=\xi}} 
\ind_{C_0}\frac{e^{-i(\mu_1+\mu_2)t}}{\mu_1}\prod_{a\in\TT_2^\infty}v_{\xi_a}\notag \\
& =: \dt \N_{0}^{(2)}(v)(\xi,t)
+ \N^{(2)}(v)(\xi,t).
\label{2g}
\end{align}

\noi
From Lemma \ref{LEM:NLS2}, 
we have the following estimate on  the boundary term $\N_{0}^{(2)}(v)$.

\begin{lemma}\label{LEM:2g0}
Let $s\geq 0$. Then, we have
\begin{align*}
\|\N_{0}^{(2)}(v)\|_{H^s}& \lesssim N^{-\frac{1}{2}+}\|v\|_{H^s}^3, \\ %\label{2g0a}\\
\|\N_{0}^{(2)}(v)-\N_{0}^{(2)}(w)\|_{H^s}
& \lesssim N^{-\frac{1}{2}+}\big(\|v\|_{H^s}^2+\|w\|_{H^s}^2\big)\|v-w\|_{H^s}.
%\label{2g0b}
\end{align*}

\end{lemma}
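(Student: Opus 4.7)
The plan is to observe that, once one unwinds the tree notation of Definition~\ref{DEF:index} for the single-generation tree $\TT_1 \in \TF(1)$, the boundary term $\N_0^{(2)}(v)$ produced by the integration-by-parts step in \eqref{2g} is (up to a harmless sign inherited from differentiating $e^{-i\mu_1 t}/\mu_1$) exactly the trilinear operator $\I^0_{>N}(v,v,v)$ introduced in Section~\ref{SEC:2}. Concretely, with $\mu_1 = \Phi(\bar\xi)$, the integral over $\{\pmb\xi \in \Xi(\TT_1):\pmb\xi_r = \xi\}$ is precisely $\intt_{\xi=\xi_1-\xi_2+\xi_3}$, and the three terminal factors $v_{\xi_a}$, $a \in \TT_1^\infty$, reconstruct $v(\xi_1)\overline{v(\xi_2)}v(\xi_3)$.

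With this identification in hand, the first bound is an immediate consequence of \eqref{B3} in Lemma~\ref{LEM:NLS2} applied with $\alpha = 0$ and $M = N$, which gives the required $N^{-1/2+}\|v\|_{H^s}^3$ estimate.

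For the Lipschitz-type difference estimate, I would either invoke \eqref{B4} of Lemma~\ref{LEM:NLS2} directly with $\alpha = 0$ and $M = N$, or, equivalently, exploit the multilinearity of $\I^0_{>N}$ through the telescoping identity
\[
\I^0_{>N}(v,v,v) - \I^0_{>N}(w,w,w) = \I^0_{>N}(v-w,v,v) + \I^0_{>N}(w,v-w,v) + \I^0_{>N}(w,w,v-w),
\]
and estimate each summand via the trilinear form of \eqref{B3}. The trilinear version of that bound is available since the proof of Lemma~\ref{LEM:NLS2} proceeds by reducing to \eqref{A1}, which is genuinely multilinear and never uses that the three input functions coincide. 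Either route yields the claimed $N^{-1/2+}(\|v\|_{H^s}^2+\|w\|_{H^s}^2)\|v-w\|_{H^s}$ estimate.

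There is no serious obstacle here: the analytic content has been entirely absorbed into the localized modulation estimates of Section~\ref{SEC:2}, and the present lemma reduces to a bookkeeping translation between the tree formalism and the operator $\I^0_{>N}$. The only point requiring a moment of care is tracking the factor of $i$ and the sign produced by the integration by parts, but this is purely cosmetic and does not affect the estimates.
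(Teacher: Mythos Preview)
Your proposal is correct and is essentially the same as the paper's approach: the paper simply states that the lemma follows from Lemma~\ref{LEM:NLS2}, and your unwinding of the tree notation to identify $\N_0^{(2)}(v)$ with $\I^0_{>N}(v,v,v)$ (up to a harmless factor of $i$) is exactly the bookkeeping needed, after which \eqref{B3} and \eqref{B4} with $\alpha=0$, $M=N$ give both estimates directly.
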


%
%\begin{proof}
%We only prove \eqref{2g0a}
%since \eqref{2g0b} follows in a similar manner.
%Moreover, by the triangle inequality, it suffices to prove \eqref{2g0a}
%for $s = 0$.
%Write $\N_{0}^{(2)}(v)$
%as
%\[ \N_{0}^{(2)}(v) = \I_{>  N }(v) = \sum_{\substack{M\geq N\\\text{dyadic}}}\I_{M}(v).\]
%
%\noi
%Then, by Lemma \ref{LEM:NLS2}, we have 
%\begin{align*}
%\|\N_{0}^{(2)}(v)\|_{L^2}
%& \les \sum_{\substack{M\geq N\\\text{dyadic}}}
%\|\I_{M}(v)\|_{L^2} 
% \lesssim  \sum_{\substack{M\geq N\\\text{dyadic}}}
% M^{-\frac{1}{2}+}\|v\|_{L^2}^3,\\
%&  \lesssim 
% N^{-\frac{1}{2}+}\|v\|_{L^2}^3.
%\end{align*}
%
%\noi
%This proves \eqref{2g0a}.
%\end{proof}
%

Next,  we decompose the frequency space into
\begin{align}
C_1: =\big\{|\mu_1+\mu_2|\leq5^3|\mu_1|^{1-\delta}\big\}
\label{C1}
\end{align}

\noi
and its complement $C_1^c$,\footnote{Clearly, the number $5^3$ in \eqref{C1} 
does not play any role  at this point.
However, we insert it to match with \eqref{CJ}.
See also  \eqref{C2} and \eqref{C3}.} where $\delta>0$ is a small constant.
 Then,  we decompose $\N^{(2)}$ as
\begin{align}
\N^{(2)}=\N_{1}^{(2)}+\N_{2}^{(2)},
\label{S1b}
\end{align}

\noi
where $\N^{(2)}_{1}:= \N^{(2)}|_{C_1}$ is defined as the restriction of $\N^{(2)}$ on $C_1$ 
and $\N_{2}^{(2)}:=\N^{(2)}-\N^{(2)}_{1}$, 
namely 
$\N_{2}^{(2)}$ is  the restriction of $\N^{(2)}$ on $C_1^c$. 
Note that we have 

\[\N_{>N}=\dt\N^{(2)}_{0}+\N_{1}^{(2)}+\N_{2}^{(2)}\]

\noi
at this point.
Thanks to the restriction \eqref{C1} on the modulation, 
we can estimate the first term $\N_{1}^{(2)}$. 
However,  we do not have a direct control of $\N_{2}^{(2)}$. 
In the following, we apply another normal form reduction 
to $\N_{2}^{(2)}$. 

\begin{lemma}\label{LEM:2g}
Let $s\geq  0$.
Then, we have 
\begin{align}
\label{2ga}
\|\N_{1}^{(2)}(v)\|_{H^s}
& \lesssim N^{-\frac{\delta}{2}+}\|v\|_{H^s}^5, \\
\|\N_{1}^{(2)}(v)-\N_{1}^{(2)}(w)\|_{H^s}
& \lesssim N^{-\frac{\delta}{2}+}\big(\|v\|_{H^s}^4+\|w\|_{H^s}^4\big)\|v-w\|_{H^s}, 
\label{2gb}
\end{align}

\noi
for $0 < \dl < 1$.

\end{lemma}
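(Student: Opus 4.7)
The plan is to view $\N^{(2)}_{1}$ as a composition of an outer and an inner trilinear expression and to combine the localized modulation estimates of Section~\ref{SEC:2} through a dyadic decomposition in $|\mu_1|$. By multilinearity, the difference estimate \eqref{2gb} follows from \eqref{2ga} through the standard telescoping sum, and the triangle inequality $\jb{\xi}^s \lesssim \prod_{j=1}^5 \jb{\xi^{(j)}}^s$ (valid for $s\geq 0$ since $\xi$ is a signed sum of five leaf frequencies) reduces \eqref{2ga} to the case $s=0$. By the symmetry of the root's three children, it suffices to treat a single $\TT_2 \in \TF(2)$, losing only the harmless factor $|\TF(2)|=3$.

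On each dyadic shell $|\mu_1|\sim M_1$ with $M_1 \geq N$, I would isolate the inner integral
\[
W(\xi_1;\xi,\xi_3) := \N^{-\mu_1}_{\leq 5^3 M_1^{1-\delta}}(v,v,v)(\xi_1,t),
\qquad \mu_1 = 2(\xi-\xi_1)(\xi-\xi_3),
\]
and repeat the Cauchy--Schwarz / volume argument from the proof of Lemma~\ref{LEM:NLS1} (with $\alpha=-\mu_1$ and $M \sim M_1^{1-\delta}$) pointwise in the outer variables. The main contribution (from Case 2 of that proof) yields $|W(\xi_1;\xi,\xi_3)| \lesssim M_1^{(1-\delta)/2+}\, F(\xi_1)$, where
\[
F(\xi_1):= \Big(\iint |v(\zeta_1)|^2 |v(\zeta_1-\xi_1+\zeta_3)|^2|v(\zeta_3)|^2\, d\zeta_1 d\zeta_3\Big)^{1/2}
\]
depends only on $v$, and a direct change-of-variables computation gives $\|F\|_{L^2_{\xi_1}}=\|v\|_{L^2}^3$. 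The complementary Case~1 contribution admits an analogous pointwise bound without any $M_1$-factor, which is strictly better and does not affect the final estimate.

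With this pointwise inner bound in place, the surviving outer integration is of the form $\I^0_{M_1}(F,v,v)$, so Lemma~\ref{LEM:NLS2} gives
\[
\|\N^{(2)}_{1,M_1}(v)\|_{L^2} \lesssim M_1^{(1-\delta)/2+}\, \|\I^0_{M_1}(F,v,v)\|_{L^2}
\lesssim M_1^{-\delta/2+}\,\|v\|_{L^2}^5,
\]
and summing the dyadic pieces over $M_1 \geq N$ (which converges absolutely for any $\delta>0$) yields the desired $N^{-\delta/2+}$ bound.

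The principal obstacle is the coupling between the outer modulation $\mu_1$ and the inner modulation $\mu_2$ through the restriction $C_1$: a naive ``freeze-$\mu_1$'' composition, implemented by partitioning $\xi_1$ into fine intervals on which $\mu_1$ is nearly constant, recombines through the triangle inequality and loses a full power of $M_1$. Working instead with the pointwise inner bound decouples the two variables: the dependence on $\alpha=-\mu_1$ survives only through the harmless factor $\jb{\mu_1}^{0+}\lesssim M_1^{0+}$, leaving a $v$-only function $F\in L^2_{\xi_1}$ on which Lemma~\ref{LEM:NLS2} applies directly.
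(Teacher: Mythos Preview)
Your argument is correct and is essentially the paper's proof: dyadically decompose in $|\mu_1|\sim M$, then apply Lemma~\ref{LEM:NLS1} to the inner block and Lemma~\ref{LEM:NLS2} to the outer $\I_M$. The paper writes this composition tersely as $\|\I_M(v,v,\N^{\mu_1}_{\leq 5^3|\mu_1|^{1-\delta}}(v))\|_{L^2}\lesssim M^{-\frac12+}\|v\|_{L^2}^2\,\|\N^{M}_{\lesssim 5^3 M^{1-\delta}}(v)\|_{L^2}$; your pointwise Cauchy--Schwarz on the inner integral, producing the $\mu_1$-independent $L^2$ majorant $F(\xi_1)$ (and its Case~1 analogue), is exactly what makes that shorthand rigorous.
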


\begin{proof}
We only present the proof of \eqref{2ga}
since \eqref{2gb} follows in a similar manner
in view of the multilinearity of $\N_{1}^{(2)}$.
Moreover, by the triangle inequality, it suffices to prove \eqref{2ga}
for $s = 0$.
From \eqref{2g} and \eqref{S1b}
with  \eqref{C1}, we have
\begin{align*}
\N_{1}^{(2)}(v)(\xi,t)
& =\sum_{\TT_2\in\TF(2)}
\intt_{\substack{\pmb{\xi}\in\Xi(\TT_2)\\\pmb{\xi}_r=\xi}}
\ind_{C_0}
\frac{e^{- i\mu_1t}}{\mu_1}\prod_{a_1\in\pi_1^\infty(\TT_2)}v_{\xi_{a_1}}
\cdot \ind_{C_1}
e^{- i\mu_2t}\prod_{a_2\in\pi_2^\infty(\TT_2)}v_{\xi_{a_2}} \notag\\
& =\sum_{\TT_2\in\TF(2)}
\intt_{\substack{\pmb{\xi}\in\Xi(\TT_2)\\\pmb{\xi}_r=\xi}}
\ind_{C_0}
\frac{e^{- i\mu_1t}}{\mu_1}\prod_{a_1\in\pi_1^\infty(\TT_2)}v_{\xi_{a_1}}
\intt_{\substack{\pmb{\xi}^{(2)}\in\Xi( \pi_2(\TT_2))\\\pmb{\xi}^{(2)}_{r^{(2)}}=\xi^{(2)}}}
\ind_{C_1}
e^{- i\mu_2t}\prod_{a_2\in\pi_2^\infty(\TT_2)}v_{\xi_{a_2}}\\
& =\sum_{\TT_2\in\TF(2)}
 \intt_{\substack{\pmb{\xi}\in\Xi(\TT_2)\\\pmb{\xi}_r=\xi}}
\ind_{C_0}
\frac{e^{- i\mu_1t}}{\mu_1}\prod_{a_1\in\pi_1^\infty(\TT_2)}v_{\xi_{a_1}}
\cdot \N^{\mu_1}_{\leq5^3|\mu_1|^{1-\delta}}(v)(\xi^{(2)},t).
\end{align*}

\noi
In the second line, 
we slightly abused notations
in the domain of the second integration
for clarity
since, strictly speaking,  it is already included in the domain of the first integral.
Note that 
the second integral  is over three variables
$\{\xi_{a_2}\}_{a_2\in\pi_2^\infty(\TT_2)}$,
while 
the first integral 
is over two variables
$\{\xi_{a_1}\}_{a_1\in\pi_1^\infty(\TT_2)}$, 
with one constraint
$\pmb{\xi}_r=\xi$.

Then, from Lemmas \ref{LEM:NLS1} and \ref{LEM:NLS2}
with  \eqref{cj} and  \eqref{C0},  
we have 
\begin{align*}
\|\N^{(2)}_{1}(v)\|_{L^2}
&\lesssim\sum_{T_2\in\TF(2)}
\sum_{\substack{M\geq N\\\text{dyadic}}}
\|\I_{M}(v, v,\N^{\mu_1} _{\leq 5^3|\mu_1|^{1-\delta}}(v))\|_{L^2}\\
&\lesssim 
\sum_{\substack{M\geq N\\\text{dyadic}}}
M^{-\frac{1}{2}+}\|v\|_{L^2}^2\|\N^M_{\lesssim 5^3M^{1-\delta}}(v)\|_{L^2}\\
&\lesssim
  N^{-\frac{\delta}{2}+}\|v\|^5_{L^2}.
\end{align*}

\noi
This proves \eqref{2ga}.
\end{proof}

Next, we apply a normal form reduction to $\N_{2}^{(2)}$.
On the support of $\N_{2}^{(2)}$, namely,  on $C_0 \cap C_1^c$,  we have
\begin{align}
|\mu_1+\mu_2|> 5^3|\mu_1|^{1-\delta}
> N^{1- \dl}.
\label{C1a}
\end{align}

\noi
By applying  differentiation by parts once again, we have
\begin{align}
\N_{2}^{(2)}(v)(\xi)
& = \dt\Bigg[\sum_{\TT_2\in\TF(2)}
\intt_{\substack{C_0\cap C_1^c\\ \pmb{\xi}\in\Xi(\TT_2)\\\pmb{\xi}_r=\xi}}
\frac{e^{- i(\mu_1 + \mu_2)t}}{ \mu_1(\mu_1 +  \mu_2)}\prod_{a\in\TT_2^\infty}v_{\xi_a}\Bigg] \notag\\
& \hphantom{XX}
-\sum_{\TT_2\in\TF(2)}
\intt_{\substack{C_0\cap C_1^c\\
\pmb{\xi}\in\Xi(\TT_2)\\\pmb{\xi}_r=\xi}} 
\frac{e^{- i(\mu_1 + \mu_2) t}}{ \mu_1(\mu_1 + \mu_2)}\partial_t\bigg(\prod_{a\in\TT_2^\infty}v_{\xi_a}\bigg)
\notag \\
& = \dt\Bigg[\sum_{\TT_2\in\TF(2)}
\intt_{\substack{C_0\cap C_1^c\\ \pmb{\xi}\in\Xi(\TT_2)\\\pmb{\xi}_r=\xi}}
\frac{e^{- i(\mu_1 + \mu_2)t}}{ \mu_1(\mu_1 +  \mu_2)}\prod_{a\in\TT_2^\infty}v_{\xi_a}\Bigg]\notag \\
& \hphantom{XX}
-\sum_{\TT_3\in\TF(3)}
\intt_{\substack{C_0\cap C_1^c\\
\pmb{\xi}\in\Xi(\TT_3)\\\pmb{\xi}_r=\xi}} 
\frac{e^{- i(\mu_1 + \mu_2+ \mu_3) t}}{ \mu_1(\mu_1 + \mu_2)}\prod_{a\in\TT_3^\infty}v_{\xi_a}
\notag \\
& =:\dt \N^{(3)}_{0}(v)(\xi)+ \N^{(3)}(v)(\xi).
\label{3g}
\end{align}

We can easily estimate the boundary term  $\N^{(3)}_{0}(v)$ as follows.
\begin{lemma} \label{LEM:3g0}
Let $s\geq 0$. 
Then, we have
\begin{align}
\label{3g0a}
\|\N^{(3)}_{0}(v)\|_{H^s}
& \les N^{-1+\frac{\delta}{2}+}\|v\|^5_{H^s},  \\
\label{3g0b}
\|\N^{(3)}_{0}(v)-\N^{(3)}_0(w)\|_{H^s}
&\les N^{-1+\frac{\delta}{2}+}\big(\|v\|_{H^s}^4+\|w\|_{H^s}^4\big)\|v-w\|_{H^s}, 
\end{align}

\noi
for $ 0 < \dl < 1$.

\end{lemma}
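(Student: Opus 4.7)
The plan is to view $\N^{(3)}_0$ as a $5$-linear expression with two nested modulation weights and to reduce its estimate to two iterated applications of the Cauchy--Schwarz measure argument behind Lemma~\ref{LEM:NLS1}, in the same spirit as Lemma~\ref{LEM:NLS2} is deduced from Lemma~\ref{LEM:NLS1}. Under the ordered-tree frequency constraints, the triangle inequality $\jb{\xi}^s \les \prod_{a \in \TT_2^\infty}\jb{\xi_a}^s$ for $s \geq 0$ reduces \eqref{3g0a} to the case $s = 0$, and the finite cardinality $|\TF(2)| = 3$ makes the sum over $\TT_2$ harmless.

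First I would dyadically decompose the two modulation scales, $|\mu_1| \sim M_0$ and $|\mu_1+\mu_2| \sim M_1$; the restrictions defining $C_0 \cap C_1^c$ force $M_0 \geq N$ and $M_1 \geq M_0^{1-\dl}$. On each dyadic piece, $|\mu_1(\mu_1+\mu_2)|^{-1} \les (M_0 M_1)^{-1}$, so after pulling out this factor it remains to bound the $L^2_\xi$-norm of
\begin{equation*}
\intt_{\substack{\pmb{\xi} \in \Xi(\TT_2),\ \xi^{(1)} = \xi\\ |\mu_1| \sim M_0,\ |\mu_1+\mu_2| \sim M_1}} \prod_{a \in \TT_2^\infty} |v_{\xi_a}|\, d\pmb{\xi}.
\end{equation*}

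The main analytic step is iterated Cauchy--Schwarz. Holding the first-generation variables fixed, so that $\mu_1$ and $\xi^{(2)} = \xi_{p^{(1)}}$ are frozen, the inner integral over the two free second-generation variables under the constraint $|\Phi(\bar\xi^{(2)}) + \mu_1| \sim M_1$ is exactly of the type controlled in the proof of Lemma~\ref{LEM:NLS1} with $\al = -\mu_1$: its Cauchy--Schwarz measure factor is bounded by $\jb{\mu_1}^{0+} M_1^{1+} \les M_0^{0+} M_1^{1+}$. Applying the same measure estimate with $\al = 0$ to the outer two-dimensional integral over first-generation variables restricted to $|\mu_1| \sim M_0$ contributes an additional factor $\les M_0^{1+}$. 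Since the parametrization $(\xi, \text{4 free variables}) \mapsto (\xi_a)_{a \in \TT_2^\infty}$ has unit Jacobian (so that integrating $\prod_a |v_{\xi_a}|^2$ over all five terminal frequencies reproduces $\|v\|_{L^2}^{10}$), combining the two steps yields
\begin{equation*}
\|\N^{(3)}_{0, M_0, M_1}(v)\|_{L^2} \les (M_0 M_1)^{-1}\cdot M_0^{\frac{1}{2}+} M_1^{\frac{1}{2}+}\|v\|_{L^2}^5 = M_0^{-\frac{1}{2}+} M_1^{-\frac{1}{2}+} \|v\|_{L^2}^5.
\end{equation*}

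Finally I would sum the dyadic pieces: $\sum_{M_1 \geq M_0^{1-\dl}} M_1^{-\frac{1}{2}+} \les M_0^{-\frac{1-\dl}{2}+}$ and then $\sum_{M_0 \geq N} M_0^{-\frac{1}{2}+}\cdot M_0^{-\frac{1-\dl}{2}+} \les N^{-1+\frac{\dl}{2}+}$, which proves \eqref{3g0a}. The difference estimate \eqref{3g0b} follows immediately from $5$-linearity via the telescoping identity $v^{\otimes 5} - w^{\otimes 5} = \sum_{k=1}^{5} v^{\otimes(k-1)} \otimes (v-w) \otimes w^{\otimes(5-k)}$, applied factor-by-factor. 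The main subtlety I anticipate is verifying that the inner Cauchy--Schwarz measure bound can be pulled outside the outer integral as a uniform factor; because the bound depends on $\mu_1$ only through $\jb{\mu_1}^{0+} \les M_0^{0+}$ within a given dyadic band, this is straightforward and the resulting loss is absorbed into the $M_0^{+}$ bookkeeping.
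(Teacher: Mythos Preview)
Your proposal is correct and is essentially the paper's own argument: both dyadically localize the modulations and reduce to iterated applications of Lemma~\ref{LEM:NLS1}/\ref{LEM:NLS2}, with the $5$-linearity giving the difference estimate via telescoping. One small caveat on your measure step: the bound $\jb{\mu_1}^{0+}M_1^{1+}$ you quote is only the Case~2 estimate in the proof of Lemma~\ref{LEM:NLS1} (both frequency differences $>1$); in Case~1 the raw two-dimensional measure is actually infinite and one must instead use the direct H\"older pairing there, which yields a pointwise $\mu_1$-independent bound and an even better gain --- the paper's terse ``iterative application of Lemma~\ref{LEM:NLS2}'' elides exactly the same point.
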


\begin{proof}
We only present the proof of \eqref{3g0a}
since \eqref{3g0b} follows in a similar manner.
Moreover, by the triangle inequality, it suffices to prove \eqref{3g0a}
for $s = 0$.
We proceed as in  the proof of Lemma \ref{LEM:2g}.
By an iterative application of  Lemma \ref{LEM:NLS2}
with \eqref{C1a}, 
we have
\begin{align*}
\|\N^{(3)}_{0}(v)\|_{L^2}
&\leq\Bigg\|\sum_{\TT_2\in\TF(2)}
\intt_{\substack{C_0\cap C_1^c\\
\pmb{\xi}\in\Xi(\TT_2)\\\pmb{\xi}_r=\xi}}
\frac{e^{i(\mu_1+\mu_2)t}}{\mu_1(\mu_1+\mu_2)}\prod_{a\in\TT_2^\infty}v_{\xi_a}\Bigg\|_{L^2}\\
& \les \sum_{T_2\in\TF(2)}
\sum_{\substack{M\geq N\\\text{dyadic}}}
\|\I_{M}(v, v,\I^{\mu_1} _{> 5^3|\mu_1|^{1-\delta}}(v))\|_{L^2}\\
&\lesssim 
\sum_{\substack{M\geq N\\\text{dyadic}}}
M^{-\frac{1}{2}+}\|v\|_{L^2}^2\|\I^M_{\ges 5^3M^{1-\delta}}(v)\|_{L^2}\\
&\les N^{-1+\frac{\delta}{2}+}\|v\|_{L^2}^5,
\end{align*}

\noi
yielding the desired estimate \eqref{3g0a}.
\end{proof}

As in the first step, we decompose $\N^{(3)}$ as 
\[\N^{(3)}=\N_{1}^{(3)}+\N_{2}^{(3)}, \]

\noi
where $\N^{(3)}_{1}$ is the restriction of $\N^{(3)}$ onto
\begin{align}
C_2: =\big\{|\wt{\mu}_3|\leq7^3|\wt{\mu}_2|^{1-\delta}\big\} \cup \big\{|\wt{\mu}_3|\leq7^3|\mu_1|^{1-\delta}\big\}
\label{C2}
\end{align}
	
\noi
and $\N_{2}^{(3)}: =\N^{(3)}- \N_1^{(3)}$.	
At this point,  we have
	\[\N_{> N} =\sum_{j = 2}^3 \dt\N^{(j)}_{0}+\sum_{j = 2}^3\N_{1}^{(j)}+\N_{2}^{(3)}.\]

As before,  the modulation restriction \eqref{C2} 
allows us to estimate 
 the first term $\N_{1}^{(3)}$.

\begin{lemma}\label{LEM:3g}
Let $s\geq 0$.
Then, we have 
\begin{align}\label{3ga}
\|\N^{(3)}_{1}(v)\|_{H^s}
& \les N^{-\frac{1}{2}+}\|v\|^5_{H^s}, \\
\label{3gb}
\|\N^{(3)}_{1}(v)-\N^{(3)}_{1}(w)\|_{H^s}
& \les N^{-\frac{1}{2}+}\big(\|v\|_{H^s}^4+\|w\|_{H^s}^4\big)\|v-w\|_{H^s},
\end{align}

\noi
for $0 < \dl < 1$.

\end{lemma}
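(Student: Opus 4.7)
The plan is to view $\N^{(3)}_1$, for each fixed ordered tree $\TT_3 \in \TF(3)$, as a three-fold composition of the trilinear operators $\I^\alpha_M$ and $\N^\alpha_{\leq M}$ from Section~\ref{SEC:2}, and then to sum a dyadic decomposition in the two modulation variables $\mu_1$ and $\wt\mu_2 = \mu_1+\mu_2$. By the triangle inequality $\langle\xi\rangle^s \lesssim \prod_{a\in\TT_3^\infty}\langle\xi_a\rangle^s$ for $s \geq 0$ under \eqref{index}, it suffices to prove \eqref{3ga} at $s = 0$. The difference estimate \eqref{3gb} then follows at once from the multilinearity of $\N_1^{(3)}$ together with the Lipschitz forms \eqref{R2} and \eqref{B2} of the basic trilinear estimates, so I focus on \eqref{3ga}.

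First, I would dyadically decompose with $M_1 := |\mu_1|$ and $M_2 := |\wt\mu_2|$. The constraints defining the support of $\N^{(3)}_1$ give $M_1 \geq N$ (from $C_0$), $M_2 \geq 5^3 M_1^{1-\delta}$ (from $C_1^c$ and \eqref{C1a}), and $|\wt\mu_3| \leq 7^3 \max(M_1, M_2)^{1-\delta}$ (from $C_2$). Mirroring the computation in the proof of Lemma~\ref{LEM:2g}, for each $\TT_3$ and each dyadic slice I would recast the contribution as the three-fold iteration
\[
\I^{0}_{\sim M_1}\Big(v,\, v,\, \I^{\mu_1}_{\sim M_2}\big(v,\, v,\, \N^{\mu_1+\mu_2}_{\leq 7^3\max(M_1,M_2)^{1-\delta}}(v)\big)\Big),
\]
where the placement of the recursive arguments among the three slots is dictated by $\#p^{(1)}$ and $\#p^{(2)}$ in the chronicle. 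Applying \eqref{R1} to the innermost $\N^\alpha_{\leq M}$ and \eqref{B1} to each $\I^\alpha_{\sim M}$, and absorbing $\langle \mu_1\rangle \sim M_1$ and $\langle \mu_1+\mu_2\rangle \sim M_2$ into the $0+$ exponents, each dyadic slice is bounded in $L^2$ by
\[
 M_1^{-\tfrac12+}\, M_2^{-\tfrac12+}\, \max(M_1, M_2)^{\tfrac{1-\delta}{2}+}\, \|v\|_{L^2}^5.
\]

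The last step is to sum the dyadic pieces, splitting according to $M_1 \geq M_2$ versus $M_2 > M_1$. In the first regime, the $M_2$-sum from $5^3 M_1^{1-\delta}$ to $M_1$ contributes $M_1^{-(1-\delta)/2+}$, which combined with the surviving $M_1^{-\delta/2+}$ yields $M_1^{-1/2+}$; the $M_1$-sum over $M_1 \geq N$ then produces $N^{-1/2+}$. In the second regime, the $M_2$-exponent becomes $-\delta/2+$ and is dominated by its smallest admissible value $\sim M_1$ (since $5^3 M_1^{1-\delta} \leq M_1$ for $M_1 \geq N$ with $N$ sufficiently large), after which summing the surviving $M_1^{-1/2-\delta/2+}$ over $M_1 \geq N$ again gives $\lesssim N^{-1/2+}$. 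Since $|\TF(3)| = c_3 = 15$ by \eqref{cj} is a fixed finite constant, the sum over ordered trees is harmless and \eqref{3ga} follows. The main subtlety is arranging the iterated-composition viewpoint so that the modulation restrictions imposed by $C_0 \cap C_1^c \cap C_2$ precisely match the forms $|\Phi(\bar\xi^{(j)}) - \alpha_j|\sim M_j$ required by the localized modulation lemmas of Section~\ref{SEC:2}; once this bookkeeping is handled, the estimate reduces to a clean iteration of Lemmas \ref{LEM:NLS1} and \ref{LEM:NLS2}.
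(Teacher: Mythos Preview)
Your approach is correct and is essentially the same as the paper's: both recast $\N_1^{(3)}$ as an iterated trilinear composition indexed by ordered trees in $\TF(3)$, dyadically decompose in $|\mu_1|$ and $|\wt\mu_2|$, and reduce to successive applications of Lemmas~\ref{LEM:NLS1} and~\ref{LEM:NLS2}; your split $M_1\ge M_2$ versus $M_2>M_1$ corresponds exactly to the paper's two cases $|\wt\mu_3|\le 7^3|\mu_1|^{1-\dl}$ versus $|\wt\mu_3|\le 7^3|\wt\mu_2|^{1-\dl}$ in the definition of $C_2$. One cosmetic remark: the iterated composition you wrote has seven copies of $v$, so the bound should read $\|v\|_{L^2}^7$ rather than $\|v\|_{L^2}^5$ (the statement of the lemma itself carries the same typo, and the paper's own proof produces $\|v\|_{L^2}^7$).
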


\begin{proof}
We only present the proof of \eqref{3ga}
since \eqref{3gb} follows in a similar manner.
Moreover, by the triangle inequality, it suffices to prove \eqref{3ga}
for $s = 0$.
As in the proof of Lemma \ref{LEM:2g}, 
with a slight abuse of notations, we have
\begin{align}
\N_{1}^{(3)}(v)(\xi,t)
& = \sum_{\TT_3\in\TF(3)}
\intt_{\substack{\pmb{\xi}\in\Xi(\TT_3)\\\pmb{\xi}_r=\xi}}
\ind_{C_0} \frac{e^{- i\mu_1t}}{\mu_1}\prod_{a_1\in\pi_1^\infty(\TT_3)}v_{\xi_{a_1}}
\intt_{\substack{\pmb{\xi}^{(2)}\in\Xi(\pi_2(\TT_3))\\\pmb{\xi}^{(2)}_{r^{(2)}}=\xi^{(2)}}}
\ind_{C_1^c} \frac{e^{- i\mu_2t}}{\wt{\mu}_2}\prod_{a_2\in\pi_2^\infty(\TT_3)}v_{\xi_{a_2}}\notag \\
&\hphantom{XX}
\times
\intt_{\substack{\pmb{\xi}^{(3)}\in\Xi(\pi_3(\TT_3))\\\pmb{\xi}^{(3)}_{r^{(3)}}=\xi^{(3)}}}
\ind_{C_2}
e^{- i\mu_3t}\prod_{a_3\in\pi_3^\infty(\TT_3)}v_{\xi_{a_3}}.
\label{3g1}
\end{align}

\noi
Note that the last integral is over three variables
$\{\xi_{a_3}\}_{a_3\in\pi_3^\infty(\TT_3)}$,
while 
the first and second integrals
are over two and two variables
(or one and three variables)
$\{\xi_{a_1}\}_{a_1\in\pi_1^\infty(\TT_3)}$
and $\{\xi_{a_2}\}_{a_2\in\pi_2^\infty(\TT_3)}$, 
with one constraint
$\pmb{\xi}_r=\xi$.

We first consider the case 
$ |\wt{\mu}_3|\leq7^3|\wt{\mu}_2|^{1-\dl} $. 
For each fixed ordered tree $\TT_3 \in \TF(3)$, 
each septilinear term in \eqref{3g1}
can be written as 
\begin{align}
\N_{1}^{(3)}\big|_{\TT_3}
& = 
 \I_{|\mu_1|> N } \Big(v,v, 
\I^{\mu_1}_{|\mu_2 + \mu_1|> 5^3 |\mu_1|^{1-\dl}}
\big(v,v, \N^{\wt{\mu}_2}_{|\mu_3+\wt{\mu}_2|\le 7^3|\wt{\mu}_2|^{1-\delta} }(v,v,v) \big)\Big)
 \label{3g2}
\\
\intertext{or}
\N_{1}^{(3)}\big|_{\TT_3}
& =  \I_{|\mu_1|> N } \Big(
\I^{\mu_1}_{|\mu_2 + \mu_1|> 5^3 |\mu_1|^{1-\dl}}(v,v,v), v,  
\N^{\wt{\mu}_2}_{|\mu_3+\wt{\mu}_2|\le 7^3|\wt{\mu}_2|^{1-\delta} }(v,v,v) \Big)
\label{3g3}
\end{align}

\noi
up to permutations of terminal nodes within a subtree of one generation.
In the following, we only consider \eqref{3g2}
since \eqref{3g3} can be estimated in a similar manner.
By dyadically decomposing $\mu_1$ and $\wt \mu_2$, we have
\begin{align*}
\eqref{3g2}& \sim 
\sum_{\substack{N_1\geq N\\\text{dyadic}}}
\sum_{\substack{N_2 \ges N_1^{1-\dl}\\ \text{dyadic}}}
 \I_{|\mu_1|\sim N_1} \Big(v,v, 
\I^{\mu_1}_{|\wt \mu_2 |\sim N_2 }
\big(v,v, \N^{\wt{\mu}_2}_{|\mu_3+\wt{\mu}_2|\le 7^3|\wt{\mu}_2|^{1-\delta} }(v,v,v) \big)\Big).
\end{align*}

\noi
Then, by Lemmas \ref{LEM:NLS1} and \ref{LEM:NLS2}, 
we can estimate \eqref{3g2} as 
\begin{align*}
\|\eqref{3g2} \|_{L^2} 
&\les  \sum_{\substack{N_1\geq N\\\text{dyadic}}}
\sum_{\substack{N_2 \ges N_1^{1-\dl}\\ \text{dyadic}}}
N_1^{- \frac{1}{2}+} \|v\|_{L^2}^2
\big\|\I^{\mu_1}_{|\wt \mu_2 |\sim N_2 }
\big(v,v, \N^{\wt{\mu}_2}_{|\mu_3+\wt{\mu}_2|\le 7^3|\wt{\mu}_2|^{1-\delta} }(v,v,v) \big)\big\|_{L^2}\\
&\les \sum_{\substack{N_1\geq N\\\text{dyadic}}}
 \sum_{\substack{N_2 \ges N_1^{1-\dl}\\ \text{dyadic}}}
N_1^{- \frac{1}{2}+} N_2^{-\frac{1}{2}+}\|v\|_{L^2}^4
\big\| \ind_{|\wt\mu_2|\sim N_2}\cdot \N^{\wt{\mu}_2}_{|\mu_3+\wt{\mu}_2|\le 7^3|\wt{\mu}_2|^{1-\delta} }(v,v,v)\big\|_{L^2}\\
&\les \sum_{\substack{N_1\geq N\\\text{dyadic}}}
 \sum_{\substack{N_2 \ges N_1^{1-\dl}\\ \text{dyadic}}}
N_1^{- \frac{1}{2}+} N_2^{-\frac{\dl}{2}+}\|v\|_{L^2}^7\\
&\les N^{- \frac{1}{2}- \frac{\dl}{2}+ \frac{\dl^2}{2} + } \|v\|_{L^2}^7.
\end{align*}

Next, we consider the case 
$ |\wt{\mu}_3|\leq7^3|\mu_1|^{1-\delta} $.
In this case, we need to estimate
the terms of the form \eqref{3g2} and \eqref{3g3}
with 
$|\mu_3+\wt{\mu}_2|\le 7^3|\wt{\mu}_2|^{1-\delta} $
replaced by 
${|\mu_3+\wt{\mu}_2|\le 7^3|\mu_1|^{1-\delta} }$.
Proceeding as before with  Lemmas \ref{LEM:NLS1} and \ref{LEM:NLS2}, 
we have
\begin{align*}
\|\eqref{3g2} \|_{L^2} 
&\les  \sum_{\substack{N_1\geq N\\\text{dyadic}}}
 \sum_{\substack{N_2 \ges N_1^{1-\dl}\\ \text{dyadic}}}
N_1^{- \frac{1}{2}+} N_2^{-\frac{1}{2}+}\|v\|_{L^2}^4
\big\| \ind_{|\wt\mu_2|\sim N_2}\cdot
 \N^{\wt{\mu}_2}_{|\mu_3+\wt{\mu}_2|\le 7^3|\mu_1|^{1-\delta} }(v,v,v)\big\|_{L^2}\\
&\les  \sum_{\substack{N_1\geq N\\\text{dyadic}}}
 \sum_{\substack{N_2 \ges N_1^{1-\dl}\\ \text{dyadic}}}
N_1^{- \frac{1}{2}+} N_2^{-\frac{1}{2}+}N_1^{\frac{1}{2}-\frac{\dl}{2}+}\|v\|_{L^2}^7\\
&\les N^{- \frac{1}{2}+} \|v\|_{L^2}^7.
\end{align*}

\noi
This completes the proof of Lemma \ref{LEM:3g}.
\end{proof}

As in the previous step, we can not  estimate 
 $\N_{2}^{(3)}$ in a direct manner.
 Hence, we perform the third step of normal form reductions:
\begin{align*}
\N_{2}^{(3)}(v)(\xi)
& = \dt\Bigg[\sum_{\TT_3\in\TF(3)}
\intt_{\substack{C_0\cap C_1^c\cap C_2^c \\ \pmb{\xi}\in\Xi(\TT_3)\\\pmb{\xi}_r=\xi}}
\frac{e^{- i\wt \mu_3 t}}{ \prod_{j = 1}^3 \wt\mu_j }\prod_{a\in\TT_3^\infty}v_{\xi_a}\Bigg]\\
& \hphantom{XX}
-\sum_{\TT_4\in\TF(4)}
\intt_{\substack{C_0\cap C_1^c\cap C_2^c \\
\pmb{\xi}\in\Xi(\TT_4)\\\pmb{\xi}_r=\xi}} 
\frac{e^{- i\wt \mu_4  t}}{ \prod_{j =1}^3 \wt \mu_j}\prod_{a\in\TT_4^\infty}v_{\xi_a}
\notag \\
& =:\dt \N^{(4)}_{0}(v)(\xi)+ \N^{(4)}(v)(\xi).
\end{align*}

\noi
The boundary term $\N^{(4)}_{0}(v)$
can be estimated as in Lemmas \ref{LEM:2g0} and \ref{LEM:3g0}.
As for $\N^{(4)}(v)$, we decompose it as
$\N^{(4)} = \N^{(4)}_1+ \N^{(4)}_2$
corresponding to the restrictions onto 
\begin{align}
C_3=\big\{|\wt{\mu}_4|\leq9^3|\wt{\mu}_3|^{1-\delta}\big\} \cup \big\{|\wt{\mu}_4|\leq9^3|\mu_1|^{1-\delta}\big\}
\label{C3}
\end{align}

\noi
and its complement $C_3^c$, respectively.
On the one hand, the modulation restriction \eqref{C3} allows
us to estimate 
$\N^{(4)}_1$
as in Lemmas \ref{LEM:2g} and \ref{LEM:3g}.
On the other hand, 
we apply the fourth step of normal form reductions
to $\N^{(4)}_2$.
In this way, we continue  normal form reductions in an indefinite manner.
 In the next subsection, we describe this
 procedure in the general $J$th step.

\subsection{General $J$th step}
\label{SUBSEC:3.3}

In this subsection, we discuss the general $J$th step in this normal form procedure.
Given  an ordered tree $\TT \in \TF(J)$, 
we  introduce the following multilinear operators $\SF_0(\TT; \,\cdot\,) $
and $\SF_1(\TT; \, \cdot\,) $,
which allow us
 to estimate the multilinear terms (associated with the ordered tree $\TT$) in an efficient manner.
For simplicity of notations, we set $M_j$ by 
\begin{align*}
M_j : = \max (|\wt \mu_j |, |\mu_1|).
\end{align*}

\begin{definition}\label{DEF:S} \rm

Let $k = 0, 1$. 
Then, we define 
$\SF_0$
and $\SF_1$
as mappings: 
\[ \TT\in \bigcup_{j = 1}^\infty \TF(j)
\longmapsto 
\text{a $(2j+1)$-linear map } \SF_k(\TT;\,\cdot\,)   \text{ on } \S(\R)^{\otimes2j + 1}, \  k = 0, 1,  \]

\noi
by the following rules. Let $v \in \S(\R)$.
	
\smallskip

\noi
\underline{Definition of $\SF_0(\TT;v)  $:}

\begin{itemize}

\item[(i)] Replace a terminal node (denoted as ``\,$\<1>$\,'') 
by $v$.

\item[(ii)] Replace the $J$th root node $r^{(J)}$  (denoted as ``\,$\<1a>$\,'')  
by  the  trilinear operator $\I^{\wt \mu_{J-1}}_{|\mu_J + \wt \mu_{J-1}| > (2J+1)^3 M_{J-1}^{1-\dl}  } $ 
whose  arguments are given by 
 the functions associated with its three children (namely $v$ in this case).

\item[(iii)] Let $j = J-1$.
Replace the $j$th root node (denoted as ``\,$\<1'>$\,'') 
by the  trilinear operator $\I^{\wt \mu_{j-1}}_{|\mu_j + \wt \mu_{j-1}| > (2j+1)^3 M_{j-1}^{1-\dl}  } $ 
whose  arguments are given by 
 the functions associated with its three children.
Repeat this process for  $j = J-2, J - 3, \dots, 2$.

\item[(iv)] Replace the root  node $r = r^{(1)}$ (denoted as ``\,$\<1''>$\,'') 
by the  trilinear operator $\I_{|\mu_1 | > N  } $ 
whose  arguments are given by 
 the functions associated with its three children.

\end{itemize}

\smallskip

\noi
\underline{Definition of $\SF_1(\TT ;v) $:}

\begin{itemize}

\item[(i)] Replace a terminal node (denoted as ``\,$\<1>$\,'') 
by $v$.

\item[(ii)] Replace the $J$th root node $r^{(J)}$  (denoted as ``\,$\<1a>$\,'')  
by  the  trilinear operator $\N^{\wt \mu_{J-1}}_{|\mu_J + \wt \mu_{J-1}| \leq (2J+1)^3 M_{J-1}^{1-\dl}  } $ 
whose  arguments are given by 
 the functions associated with its three children
 (namely $v$ in this case).

\item[(iii)] Let $j = J-1$.
Replace the $j$th root node (denoted as ``\,$\<1'>$\,'') 
by the  trilinear operator $\I^{\wt \mu_{j-1}}_{|\mu_j + \wt \mu_{j-1}| > (2j+1)^3 M_{j-1}^{1-\dl}  } $ 
whose  arguments are given by 
 the functions associated with its three children.
Repeat this process for  $j = J-2, J - 3, \dots, 2$.

\item[(iv)] Replace the root  node $r = r^{(1)}$ (denoted as ``\,$\<1''>$\,'') 
by the  trilinear operator $\I_{|\mu_1 | > N  } $ 
whose  arguments are given by 
 the functions associated with its three children.

\end{itemize}

%\noi
%We often denote $\SF_k^\TT(v)$ by $\SF_k(\TT; v)$.

\end{definition}

Note that the only difference between the two definitions
appears in Step  (ii).
The operators $\SF_0(\TT;\,\cdot\,)$ and $\SF_1(\TT;\,\cdot\,)$
are a priori defined from $\S(\R)^{\otimes 2j + 1} $ to $\S'(\R)$.
In the following, we show that 
they are bounded on $L^2(\R)$.

\begin{remark}\label{REM:multi}\rm 
In the above definition, 
we only defined $\SF_0(\TT;v)$ and $\SF_1(\TT;v)$, 
namely, when all the $2j+1$ arguments are identical.
Let us now describe how to define 
 $\SF_k(\TT;v_1, \dots, v_{2j+1})$, $k = 0, 1$,  in general.

Given a tree $\TT \in \TF(j)$, 
label its terminal nodes
by $a_1, \dots, a_{2j+1}$
(say, by moving from left to right in the planar graphical representation of the tree).
Given functions $v_1, \dots, v_{2j+1} \in \S(\R)$,
we only need to modify Step  (i) in Definition \ref{DEF:S}
as follows:
\begin{itemize}

\item[(i')] Replace  terminal nodes $a_\l  \in \TT^\infty$
by $v_\l$.
\end{itemize}

\end{remark}

%%%%%%%%%%%%%%%%%%%%%%%%%%%
%%%%%%%%%%%%%%%%%%%%%%%%%%%
%%%%%%%%%%%%%%%%%%%%%%%%%%%
%%%%%%%%%%%%%%%%%%%%%%%%%%%

\DeclareSymbol{S1}{0}
 {\draw (0,0) node[dot]{} -- (0,4) node[dia2, label= above:${\ \ r =r^{(1)} }$] {}; 
 \draw (-3,0) node[dot] {} -- (0,4)node[dia2] {} -- (3,0) node[dot] {};
 \draw (3,-4) node[dot]{} -- (3,0) node[ddot, label=right:${\rule[-3mm]{0mm}{5mm}r^{(2)}}$] {}; 
 \draw (0,-4) node[dot] {} -- (3,0)node[ddot] {} -- (6,-4) node[dot] {};
 \draw (6,-8) node[dot]{} -- (6,-4) node[dia3, label=right:${\rule[-3mm]{0mm}{5mm}r^{(3)}}$] {}; 
 \draw (3,-8) node[dot] {} -- (6,-4)node[dia3] {} -- (9,-8) node[dot] {};
 }

\DeclareSymbol{S2}{0}
 {\draw (0,0) node[dot]{} -- (0,4) node[dia2, label= above:${\ \ r =r^{(1)} }$] {}; 
 \draw (-5,0) node[dot] {} -- (0,4)node[dia2] {} -- (5,0) node[dot] {};
 \draw (-5,-4) node[dot]{} -- (-5,0) node[ddot, label=left:${\rule[-3mm]{0mm}{5mm}r^{(2)}}$] {}; 
 \draw (-8,-4) node[dot] {} -- (-5,0)node[ddot] {} -- (-2,-4) node[dot] {};
 \draw (5,-4) node[dot]{} -- (5,0) node[dia3, label=right:${\rule[-3mm]{0mm}{5mm}r^{(3)}}$] {}; 
 \draw (2,-4) node[dot] {} -- (5,0)node[dia3] {} -- (8,-4) node[dot] {};
 }

%%%%%%%%%%%%%%%%%%%%%%%%%%%
%%%%%%%%%%%%%%%%%%%%%%%%%%%
%%%%%%%%%%%%%%%%%%%%%%%%%%%
%%%%%%%%%%%%%%%%%%%%%%%%%%%

Before proceeding further, 
let us consider the following examples of ordered trees of the third  generation:
\[ 
\TT = \<S1>
\qquad  \qquad \qquad
\TT' = \<S2>
\]

\medskip
\noi
It is easy to  see that 
$\SF_1(\TT;v)$ and $\SF_1(\TT';  v)$ correspond to 
the septilinear terms 
\eqref{3g2} and 
 \eqref{3g3}, respectively.

Next, let $\BT$ be  the collection 
of formal sums of elements in  
$ \bigcup_{j = 1}^\infty \TF(j)$.
Then, we extend the definitions
of $\SF_0$
and $\SF_1$
to elements in $\BT$
by imposing the ``additivity'':
\begin{align}
 \SF_k\bigg(\sum_{\al \in \mathcal{A}}\TT^\al; \, \cdot \bigg) 
: =  \sum_{\al \in \mathcal{A}} \SF_k(\TT^\al; \,\cdot\,)
\label{X1}
\end{align}

\noi
for a finite index set $\mathcal{A}$.
With this definition, we can write $\N^{(3)}_0 (v)$ and $\N^{(3)}_1 (v)$ from the previous subsection 
as
\begin{align*}
\N^{(3)}_0 (v) = \SF_0\bigg(\sum_{\TT \in \TF(2)} \TT;  v\bigg)
\qquad \text{and}\qquad \N^{(3)}_1 (v) = \SF_1\bigg(\sum_{\TT \in \TF(3)} \TT; v\bigg).
\end{align*}

Now, we are ready to discuss the general $J$th step
of  the normal form reductions.
Define $C_j$ by 
\begin{align}
C_j
& =\big\{|\wt{\mu}_{j+1}|\leq(2j+3)^3M_j^{1-\delta}\big\}\notag \\
& =\big\{|\wt{\mu}_{j+1}|\leq(2j+3)^3|\wt{\mu}_j|^{1-\delta}\big\}
\cup\big\{|\wt{\mu}_{j+1}|\leq(2j+3)^3|\mu_1|^{1-\delta}\big\}
\label{CJ}
\end{align}

\noi
for $j \in \NN$.
Then, after $J$ steps, we have
\begin{align}
\N^{(J)}_{2}(v)(\xi) 
&=\sum_{\TT_{J}\in\TF(J)}
\intt_{\substack{C_0\,\cap\,\bigcap_{j=1}^{J-1}C_j^c\\
\pmb{\xi}\in\Xi(\TT_{J})\\\pmb{\xi}_r=\xi}} 
\frac{e^{- i\wt{\mu}_Jt}}{\prod_{j=1}^{J-1}\wt{\mu}_j}\prod_{a\in\TT_{J}^\infty}v_{\xi_a} \notag \\
&=\dt
\Bigg[\sum_{\TT_{J}\in\TF(J)}
\intt_{\substack{C_0\,\cap\,\bigcap_{j=1}^{J-1}C_j^c\\
\pmb{\xi}\in\Xi(\TT_{J})\\ \pmb{\xi}_r=\xi}} 
\frac{e^{- i\wt{\mu}_Jt}}{\prod_{j=1}^{J}\wt{\mu}_j}\prod_{a\in\TT_{J}^\infty}v_{\xi_a}\Bigg]\notag \\
&\hphantom{X}
 -\sum_{\TT_{J+1}\in\TF(J+1)}
\intt_{\substack{C_0\,\cap\,\bigcap_{j=1}^{J-1}C_j^c\cap C_{J}\\
\pmb{\xi}\in\Xi(\TT_{J+1})\\ \pmb{\xi}_r=\xi}} 
\frac{e^{- i\wt{\mu}_{J+1}t}}{\prod_{j=1}^{J}\wt{\mu}_j}\prod_{a\in\TT_{J+1}^\infty}v_{\xi_a}\notag \\
&\hphantom{X}
-\sum_{\TT_{J+1}\in\TF(J+1)}
\intt_{\substack{C_0\, \cap\, \bigcap_{j=1}^{J}C_j^c\\
\pmb{\xi}\in\Xi(\TT_{J+1})\\ \pmb{\xi}_r=\xi}}
\frac{e^{- i\wt{\mu}_{J+1}t}}{\prod_{j=1}^{J}\wt{\mu}_j}\prod_{a\in\TT_{J+1}^\infty}v_{\xi_a}\notag \\
& =: \dt \N_{0}^{(J+1)}(v)(\xi)
+  \N_{1}^{(J+1)}(v)(\xi) + \N_{2}^{(J+1)}(v)(\xi).
\label{jg}
\end{align}

\noi
As in the previous subsection, let
\begin{align}
 \N^{(J+1)}
:=   \N_{1}^{(J+1)} + \N_{2}^{(J+1)}.
\label{jg'}
\end{align}

In view of Definition \ref{DEF:S}, we have 
\begin{align}
\N^{(J+1)}_0 (v) = \SF_0\bigg(\sum_{\TT \in \TF(J)} \TT;  v\bigg)
\qquad \text{and}\qquad \N^{(J+1)}_1 (v) = \SF_1\bigg(\sum_{\TT \in \TF(J+1)} \TT; v\bigg).
\label{X2}
\end{align}

\noi
In the following, %As in the previous subsection, 
we estimate 
$ \N_{0}^{(J+1)}$
and $\N_{1}^{(J+1)}$ for general $J \in \mathbb{N}$.
As for the last term $\N_{2}^{(J+1)}$ in \eqref{jg}, 
we perform a normal form reduction once again
and obtain \eqref{jg} with $J$ replaced by $J+1$. 
In Section \ref{SEC:4}, 
 we show that the remainder term 
$\N_{2}^{(J+1)}$ tends to $0$ in an appropriate sense as $J \to \infty$.

\begin{lemma}\label{LEM:jg0}

Let $s\geq 0$.
Then,  we have
\begin{align}
\label{jg0a}
\|\N^{(J+1)}_{0}(v)\|_{H^s}
& \les N^{-\frac{J}{2}+\frac{J-1}{2}\dl+}\|v\|^{2J+1}_{H^s},  \\
\label{jg0b}
\|\N^{(J+1)}_{0}(v)-\N^{(J+1)}_0(w)\|_{H^s}
&\les 
N^{-\frac{J}{2}+\frac{J-1}{2}\dl+}
\big(\|v\|_{H^s}^{2J}+\|w\|_{H^s}^{2J}\big)\|v-w\|_{H^s}, 
\end{align}

\noi
for $0 < \dl < 1$.
\end{lemma}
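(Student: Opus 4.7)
The plan is to reduce \eqref{jg0a} to the case $s = 0$ and to a tree-by-tree bound, and then iteratively apply the localized modulation estimates of Lemma \ref{LEM:NLS2} from the outermost operator inward. Since the root frequency $\xi$ is a signed sum of the terminal frequencies $\{\xi_a\}_{a \in \TT^\infty}$ (iterate \eqref{index}), the triangle inequality $\jb{\xi}^s \les \prod_{a \in \TT^\infty}\jb{\xi_a}^s$ for $s \ge 0$ lets us redistribute Sobolev weights to the leaves, so it suffices to prove \eqref{jg0a} at $s = 0$. By \eqref{X2} and the additivity \eqref{X1} we have $\N_0^{(J+1)}(v) = \sum_{\TT \in \TF(J)} \SF_0(\TT; v)$, and $|\TF(J)| = c_J$ depends only on $J$ (cf.~\eqref{cj}), so it is enough to bound $\|\SF_0(\TT; v)\|_{L^2}$ uniformly in $\TT \in \TF(J)$. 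The difference estimate \eqref{jg0b} will then follow from \eqref{jg0a} by telescoping $v^{\otimes(2J+1)} - w^{\otimes(2J+1)}$ into a sum of $2J+1$ terms and using the multilinearity of $\SF_0(\TT;\,\cdot\,)$ (Remark \ref{REM:multi}).

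Fix $\TT \in \TF(J)$. Following the pattern already implemented in the proof of Lemma \ref{LEM:3g0}, I would dyadically decompose $|\wt\mu_j| \sim K_j$ for $j = 1, \dots, J$ and estimate $\SF_0(\TT; v)$ by iterating the trilinear form of Lemma \ref{LEM:NLS2} from the outermost $\I_{|\mu_1|\sim K_1}$ inward. The modulation thresholds built into Definition \ref{DEF:S} enforce $K_1 \ges N$ together with, for every $j \ge 2$,
\begin{align*}
K_j \ges M_{j-1}^{1-\dl} = \max(K_{j-1}, K_1)^{1-\dl} \ges N^{1-\dl}.
\end{align*}
At the outer level, Lemma \ref{LEM:NLS2} produces a factor $K_1^{-1/2+}$ and leaves the three inner arguments to be estimated in $L^2$. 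At each interior level $j \ge 2$, applying the same lemma to $\I^{\wt\mu_{j-1}}_{|\wt\mu_j|\sim K_j}$---with $|\wt\mu_{j-1}| \sim K_{j-1}$ on the support---yields a factor $\jb{\wt\mu_{j-1}}^{0+} K_j^{-1/2+} \les K_{j-1}^{0+} K_j^{-1/2+}$, while each of the $2J+1$ terminal nodes contributes a factor $\|v\|_{L^2}$.

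Absorbing the tiny $K_{j-1}^{0+}$ losses into the $+$ slack and noting that each dyadic sum is a geometric series controlled by its lower endpoint, I obtain
\begin{align*}
\|\SF_0(\TT; v)\|_{L^2}
&\les \bigg(\sum_{K_1 \ges N} K_1^{-\frac{1}{2}+}\bigg) \prod_{j=2}^{J} \bigg(\sum_{K_j \ges N^{1-\dl}} K_j^{-\frac{1}{2}+}\bigg) \|v\|_{L^2}^{2J+1} \\
&\les N^{-\frac{1}{2} - \frac{J-1}{2}(1-\dl)+}\|v\|_{L^2}^{2J+1}
= N^{-\frac{J}{2}+\frac{J-1}{2}\dl+}\|v\|_{L^2}^{2J+1},
\end{align*}
which is \eqref{jg0a}. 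The main obstacle---and the key observation---is that the uniform lower bound $M_{j-1} \ges |\mu_1| \ges N$ at every generation forces the uniform inequality $K_j \ges N^{1-\dl}$ at every interior level (rather than a progressively weaker tower $K_j \ges K_{j-1}^{1-\dl} \ges \cdots \ges N^{(1-\dl)^{j-1}}$, which would degrade with $j$); this is precisely what produces the clean additive $-(1-\dl)/2$ gain per generation and hence the stated exponent.
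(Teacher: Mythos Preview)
Your approach mirrors the paper's: reduce to $s=0$ by the triangle inequality, bound by $c_J$ times a per-tree estimate, dyadically decompose the cumulative modulations $|\wt\mu_j|\sim K_j$, and iterate Lemma~\ref{LEM:NLS2} from the root inward. The structure and the key observation that the uniform lower bound $M_{j-1}\geq |\mu_1|\geq N$ forces $K_j\ges N^{1-\dl}$ at every interior level are both correct.

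However, you gloss over the constant's dependence on $J$, and this is not innocuous. Your per-tree bound carries an implicit absolute constant raised to the power $J$ (from the $J$ applications of Lemma~\ref{LEM:NLS2} and the $J$ geometric sums), and multiplying by $c_J=|\TF(J)|=(2J-1)!!$ produces a prefactor growing faster than any geometric sequence in $J$. Since the lemma is summed over $J$ in the proof of Theorem~\ref{THM:NF}, this would make the normal form series diverge for every choice of $N$. The paper's remedy is to retain the factor $(2j+1)^3$ in the threshold $|\wt\mu_j|>(2j+1)^3 M_{j-1}^{1-\dl}$ coming from the definition~\eqref{CJ} of $C_{j-1}^c$: summing $\sum_{N_j\geq (2j+1)^3 \wt M_{j-1}^{1-\dl}}N_j^{-1/2+}$ then yields an extra factor $(2j+1)^{-3/2+}$ at each generation, and the product $\prod_{j=2}^J(2j+1)^{-3/2+}$ decays factorially, cancelling both $c_J$ and the exponential loss $3^{Js}$ from the reduction to $s=0$ (see~\eqref{jg03}). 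This is exactly difficulty~(2) flagged in the introduction, and it is the reason the cutoffs $C_j$ carry the prefactor $(2j+3)^3$ rather than an absolute constant.
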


\begin{proof}
We only present the proof of \eqref{jg0a}
since \eqref{jg0b} follows in a similar manner.
Note that there is an extra factor $\sim J$ 
when we estimate the difference in \eqref{jg0b}
since
$|a^{2J+1} - b^{2J+1}| \lesssim     
\big(\sum_{j = 1}^{2J+1} a^{2J+1-j}b^{j-1} \big) |a - b |$
has $O(J)$ many terms.
This, however, does not cause a problem since the constant we obtain decays 
like a  power of a factorial in $J$ (as we see below in \eqref{jg03}.)
The same comment applies to Lemma \ref{LEM:jg} below.

Moreover, we claim that  it suffices to prove \eqref{jg0a}
for $s = 0$.
When $s > 0$, 
we argue as follows.
Fix an ordered tree $\TT\in \TF(J)$
and an index function $\pmb{\xi} \in \Xi(\TT)$ with $\pmb{\xi}_r = \xi$.
By the triangle inequality, we have 
$\max_{k = 1, 2, 3} \jb{\xi^{(j)}_k} \geq \frac{1}{3}\jb{\xi^{(j)}}$,
since we have
$\xi^{(j)} = \xi^{(j)}_1 - \xi^{(j)}_2 + \xi^{(j)}_3$.
Hence, 
there exists at least one terminal node $a \in \TT^\infty$
such that 
\[\jb{\xi}^s 
 \leq 3^{Js} \jb{\xi_a}^s.\]

\noi
Note that the constant grows exponentially in $J$.
However, this exponential growth does not cause a problem
thanks to the factorial decay in the denominator 
in \eqref{jg03} below.

From  \eqref{X1} and \eqref{X2}, we have 
\begin{align}
\|\N^{(J+1)}_0 (v) \|_{L^2} \leq  c_J \sup_{\TT\in \TF(J)} \| \SF_0( \TT;  v)\|_{L^2},
\label{jg01}
\end{align}

\noi
where $c_J = |\TF(J)|$ as in \eqref{cj}.
We now decompose $\SF_0( \TT;  v)$
into dyadic pieces in terms of modulations $\wt \mu_j$.
Given dyadic $N_j$, $j = 1, \dots, J$, 
define $\wt M_j $ by 
\begin{align}
\wt M_j : = \max (N_j , N_1).
\label{X0}
\end{align}

\noi
With $\bar N = (N_1, \dots, N_{J})$, 
we define 
$\SF_{0, \bar N}( \TT;  v)$
by making the following modifications
in  Steps (ii), (iii),  and (iv) 
of the definition of $\SF_0(\TT; v)$:

\smallskip

\begin{tabular}{rccc}
(ii) 
& $\I^{\wt \mu_{J-1}}_{|\mu_J + \wt \mu_{J-1}| > (2J+1)^3 M_{J-1}^{1-\dl}  } $ 
& \quad $\LRA$ \qquad 
& $\I^{\wt \mu_{J-1}}_{|\mu_J + \wt \mu_{J-1}| \sim N_J 
} $,\\  

(iii)
& $\I^{\wt \mu_{j-1}}_{|\mu_j + \wt \mu_{j-1}| > (2j+1)^3 M_{j-1}^{1-\dl}  } $ 
& \quad $\LRA$ \qquad 
& $\I^{\wt \mu_{j-1}}_{|\mu_j + \wt \mu_{j-1}| \sim N_j  } $,  \\

(iv)
& $\I_{|\mu_1 | > N  }$
& \quad $\LRA$ \qquad 
& $  \I_{|\mu_1 | \sim N_1  } $ .

\end{tabular}

\smallskip

\noi
Then, we have 
\begin{align}
\SF_0( \TT;  v) \sim 
 \sum_{\substack{N_1\geq  N\\\text{dyadic}}}
\sum_{\substack{N_2 \geq   5^3  \wt M_1^{1-\dl}\\ \text{dyadic}}}
\cdots
\sum_{\substack{N_{J} \geq   (2J+1)^3 \wt M_{J-1}^{1-\dl}\\ \text{dyadic}}}
\SF_{0, \bar N} ( \TT;  v).
\label{jg02}
\end{align}

Fix  an ordered tree $\TT \in \TF(J)$. 
In view of  Remark \ref{REM:order}, 
we can estimate 
$\SF_{0, \bar N} ( \TT;  v)$ 
by applying 
Lemma \ref{LEM:NLS2} in a successive manner
in the following order: 
\[  \I_{|\mu_1 | \sim N_1  }, \
\I^{ \mu_{1}}_{|\mu_2 +  \mu_{1}| \sim  N_2 } ,
\  \I^{\wt \mu_{2}}_{|\mu_3 +  \wt \mu_{2}| \sim N_3 } ,
\dots, \ 
\I^{\wt \mu_{J-1}}_{|\mu_J + \wt \mu_{J-1}| \sim N_J } .\]

\noi
Then, 
it follows from  Lemma \ref{LEM:NLS2} with \eqref{jg01}, \eqref{jg02}, and \eqref{X0}, 
that 
\begin{align}
\|\N^{(J+1)}_0 (v) \|_{L^2} 
& \leq  c_J \sup_{\TT\in \TF(J)} \| \SF_0( \TT;  v)\|_{L^2} \notag\\
& \les 
c_J
 \sum_{\substack{N_1\geq N\\\text{dyadic}}}
\sum_{\substack{N_2 \geq  5^3 \wt M_1^{1-\dl}\\ \text{dyadic}}}
\cdots
\sum_{\substack{N_{J} \geq (2J+1)^3 \wt M_{J-1}^{1-\dl}\\ \text{dyadic}}}
N_1^{-\frac{1}{2}+}
\prod_{j = 2}^{J}N_j^{-\frac12+} \|v\|_{L^2}^{2J+1}\notag \\
& \les 
\frac{c_J}{\prod_{j = 2}^J(2j+1)^{\frac{3}{2}- }}
 \sum_{\substack{N_1\geq N\\\text{dyadic}}}
N_1^{-\frac{J}{2}+\frac{J-1}{2}\dl+}\|v\|_{L^2}^{2J+1}
\label{jg03} \\
&  \les 
N^{-\frac{J}{2}+\frac{J-1}{2}\dl+}\|v\|_{L^2}^{2J+1}.\notag
\end{align}

\noi
This completes the proof of Lemma \ref{LEM:jg0}.
\end{proof}

A similar argument yields the following bounds
on $\N^{(J+1)}_{1}(v)$.

\begin{lemma}\label{LEM:jg}

Let $s\geq 0$.
Then,  we have
\begin{align}
\label{jga}
\|\N^{(J+1)}_{1}(v)\|_{H^s}
& \les 
N^{-\frac{J-1}{2}+\frac{J-2}{2}\dl+}
\|v\|^{2J+3}_{H^s},  \\
%\label{jgb}
\|\N^{(J+1)}_{1}(v)-\N^{(J+1)}_1(w)\|_{H^s}
&\les 
N^{-\frac{J-1}{2}+\frac{J-2}{2}\dl+}
\big(\|v\|_{H^s}^{2J+2}+\|w\|_{H^s}^{2J+2}\big)\|v-w\|_{H^s}, 
\notag
\end{align}

\noi
for $0 < \dl < 1$.
\end{lemma}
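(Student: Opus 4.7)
The plan is to adapt the proof of Lemma \ref{LEM:jg0} with the sole modification that the outermost trilinear operator at the root $r^{(J+1)}$ is now of type $\N$ (controlled by Lemma \ref{LEM:NLS1}) rather than type $\I$ (controlled by Lemma \ref{LEM:NLS2}). As in Lemma \ref{LEM:jg0}, the reduction to $s = 0$ via the triangle inequality $\jb{\xi}^s \leq 3^{(J+1)s}\jb{\xi_a}^s$ along an index function $\pmb\xi\in\Xi(\TT)$ goes through unchanged, since the exponential factor $3^{(J+1)s}$ will be absorbed by the factorial decay emerging from the dyadic summations. The difference estimate likewise follows from multilinearity at the price of an $O(J)$ loss which is harmless. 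Thus it suffices to establish $\|\N^{(J+1)}_1(v)\|_{L^2} \lesssim N^{-(J-1)/2 + (J-2)\dl/2+}\|v\|_{L^2}^{2J+3}$.

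Using \eqref{X1} and \eqref{X2}, we bound
\[
\|\N^{(J+1)}_1(v)\|_{L^2} \leq c_{J+1}\sup_{\TT \in \TF(J+1)}\|\SF_1(\TT;v)\|_{L^2}.
\]
Fixing $\TT \in \TF(J+1)$, we dyadically decompose $|\mu_1|\sim N_1\geq N$, $|\mu_j + \wt\mu_{j-1}|\sim N_j\geq (2j+1)^3\wt M_{j-1}^{1-\dl}$ for $j = 2,\ldots,J$, and $|\mu_{J+1} + \wt\mu_J|\sim N_{J+1} \leq (2J+3)^3\wt M_J^{1-\dl}$, the last restriction being built into $\SF_1$ via the set $C_J$ in \eqref{CJ}. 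Following the path structure in Remark \ref{REM:order}, we apply Lemma \ref{LEM:NLS2} at each of the $J$ inner roots, contributing $N_j^{-1/2+}$, and apply Lemma \ref{LEM:NLS1} at the outermost root, contributing $\jb{\wt\mu_J}^{0+}N_{J+1}^{1/2+} \lesssim \wt M_J^{0+}N_{J+1}^{1/2+}$.

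Performing the innermost dyadic summation first yields
\[
\sum_{N_{J+1} \leq (2J+3)^3 \wt M_J^{1-\dl}} N_{J+1}^{1/2+} \lesssim (2J+3)^{3/2+}\wt M_J^{(1-\dl)/2+}.
\]
Using $\wt M_J = \max(N_J, N_1) \leq N_J + N_1$, we split $\wt M_J^{(1-\dl)/2+} \lesssim N_1^{(1-\dl)/2+} + N_J^{(1-\dl)/2+}$. The contribution of $N_1^{(1-\dl)/2+}$ reduces the remaining sums over $N_J, N_{J-1}, \ldots, N_2, N_1$ to exactly those carried out in the proof of Lemma \ref{LEM:jg0}, yielding
\[
\sum_{N_1 \geq N} N_1^{(1-\dl)/2+}\cdot N_1^{-J/2 + (J-1)\dl/2+} \lesssim N^{-(J-1)/2 + (J-2)\dl/2+},
\]
while the $N_J^{(1-\dl)/2+}$ piece combines with the adjacent $N_J^{-1/2+}$ to produce a convergent geometric tail $\sum_{N_J} N_J^{-\dl/2+}\lesssim \wt M_{J-1}^{-\dl(1-\dl)/2+}$, giving a strictly better bound after iterating the remaining summations. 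The numerical constants $c_{J+1}(2J+3)^{3/2+}/\prod_{j=2}^J(2j+1)^{3/2-}$ behave like $2^J J^{-J/2+O(1)}$, ensuring that the overall implicit constant is $J$-independent and in fact decays fast enough for the series in \eqref{I5} to converge absolutely, as used in Subsection \ref{SUBSEC:3.4}.

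The main obstacle is purely the bookkeeping required to absorb the positive-power factor $\wt M_J^{(1-\dl)/2+}$ introduced by the $\N$-operator at the outermost root: this factor must be redistributed through the iterated $\I$-summations without disrupting their convergence. The splitting $\wt M_J \leq N_J + N_1$ is the cleanest way to do this, and in fact the two resulting subcases are precisely the two subcases $|\wt\mu_3|\leq 7^3|\wt\mu_2|^{1-\dl}$ and $|\wt\mu_3|\leq 7^3|\mu_1|^{1-\dl}$ appearing in the (already fully written out) proof of Lemma \ref{LEM:3g}, which is the $J = 2$ case of the present lemma; the general $J$ case is a straightforward induction following the same template.
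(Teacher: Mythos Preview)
Your proposal is correct and follows essentially the same approach as the paper: reduce to $s=0$, bound by $c_{J+1}\sup_{\TT}\|\SF_1(\TT;v)\|_{L^2}$, dyadically decompose all the $\wt\mu_j$, apply Lemma~\ref{LEM:NLS2} at the $J$ inner roots and Lemma~\ref{LEM:NLS1} at $r^{(J+1)}$, and then handle the positive-power factor $\wt M_J^{(1-\dl)/2+}$ by splitting according to whether $\wt M_J\sim N_J$ or $\wt M_J\sim N_1$. The only cosmetic difference is that the paper performs this last step as an explicit case analysis on $\max(N_J,N_1)$, whereas you use the algebraic inequality $\wt M_J^{(1-\dl)/2+}\lesssim N_1^{(1-\dl)/2+}+N_J^{(1-\dl)/2+}$; the two are equivalent and yield the same exponents.
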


\begin{proof}
Arguing as in the proof of Lemma \ref{LEM:jg0}, 
it suffices to prove  \eqref{jga}
for $s = 0$.
From  \eqref{X1} and \eqref{X2}, we have 
\begin{align}
\|\N^{(J+1)}_1 (v) \|_{L^2} \leq  c_{J+1} \sup_{\TT\in \TF(J+1)} \| \SF_1( \TT;  v)\|_{L^2}.
\label{jg1}
\end{align}

\noi
As in the proof of Lemma \ref{LEM:jg0}, 
we decompose $\SF_1( \TT;  v)$
into dyadic pieces in terms of modulations $\wt \mu_j$.
With $\bar N = (N_1, \dots, N_{J+1})$, 
we define 
$\SF_{1, \bar N}( \TT;  v)$
by making the following modifications
in  Steps (ii), (iii),  and (iv) 
of the definition of $\SF_1(\TT; v)$
(with $J$ replaced by $J+1$):

\smallskip

\begin{tabular}{rccc}
(ii) 
& $\N^{\wt \mu_{J}}_{|\mu_{J+1} + \wt \mu_{J}| \leq (2J+3)^3 M_{J}^{1-\dl}  } $ 
& \quad $\LRA$ \qquad 
& $\N^{\wt \mu_{J}}_{|\mu_{J+1} + \wt \mu_{J}| \sim N_{J+1}  } $,\\  

(iii)
& $\I^{\wt \mu_{j-1}}_{|\mu_j + \wt \mu_{j-1}| > (2j+1)^3 M_{j-1}^{1-\dl}  } $ 
& \quad $\LRA$ \qquad 
& $\I^{\wt \mu_{j-1}}_{|\mu_j + \wt \mu_{j-1}| \sim   N_j} $,  \\

(iv)
& $\I_{|\mu_1 | > N  }$
& \quad $\LRA$ \qquad 
& $  \I_{|\mu_1 | \sim N_1  } $ ,

\end{tabular}

\smallskip

\noi
where $\wt M_j$ is as in \eqref{X0}.	
Then, we have 
\begin{align}
\SF_1( \TT;  v) \sim 
 \sum_{\substack{N_1\geq N\\\text{dyadic}}}
\sum_{\substack{N_2 \geq 5^3 \wt M_1^{1-\dl}\\ \text{dyadic}}}
\cdots
\sum_{\substack{N_{J} \geq (2J +1)^3 \wt M_{J-1}^{1-\dl}\\ \text{dyadic}}}
\ \sum_{\substack{N_{J+1} \leq  2^{-1} \cdot(2J +3 )^3 \wt M_{J}^{1-\dl}\\ \text{dyadic}}}
\SF_{1, \bar N} ( \TT;  v).
\label{jg2}
\end{align}

Fix  an ordered tree $\TT \in \TF(J+1)$. 
Proceeding as before, 
we can estimate 
$\SF_{1, \bar N} ( \TT;  v)$ 
by applying 
Lemmas \ref{LEM:NLS1} and \ref{LEM:NLS2} in a successive manner
in the following order: 
\begin{align*}
  \I_{|\mu_1 | \sim N_1  }, \
\I^{ \mu_{1}}_{|\mu_2 +  \mu_{1}| \sim  N_2 } ,
\dots, \ 
\I^{\wt \mu_{J-1}}_{|\mu_J + \wt \mu_{J-1}| \sim N_J  }, 
\ \N^{\wt \mu_{J}}_{|\mu_{J+1} + \wt \mu_{J}| \sim N_{J+1}}.
%\label{jg3}
\end{align*}

We first consider the contribution from the case $\wt M_J \sim N_J $.
It follows from  Lemmas \ref{LEM:NLS1} and \ref{LEM:NLS2}  with \eqref{jg1}, \eqref{jg2}, and \eqref{X0}
that 
\begin{align*}
\|\N^{(J+1)}_1 & (v) \|_{L^2} 
 \leq  c_{J+1} \sup_{\TT\in \TF(J)} \| \SF_1( \TT;  v)\|_{L^2} \notag\\
& \les c_{J+1}
 \sum_{\substack{N_1\geq N\\\text{dyadic}}}
\sum_{\substack{N_2 \geq  5^3 \wt M_1^{1-\dl}\\ \text{dyadic}}}
\cdots
\sum_{\substack{N_{J} \geq (2J+1)^3 \wt M_{J-1}^{1-\dl}\\ \text{dyadic}}}
N_1^{-\frac{1}{2}+}
\prod_{j = 2}^{J}N_j^{-\frac12+}
\notag\\
& \hphantom{XXXXXXXX}
 \times \sum_{\substack{N_{J+1} \leq  2^{-1} \cdot (2J +3 )^3 N_{J}^{1-\dl}\\ \text{dyadic}}}
N_{J+1}^{\frac{1}{2}+}\|v\|_{L^2}^{2J+3}
\notag \\
& \les c_{J+1}(2J+3)^{\frac{3}{2}+}
 \sum_{\substack{N_1\geq N\\\text{dyadic}}}
\sum_{\substack{N_2 \geq  5^3 \wt M_1^{1-\dl}\\ \text{dyadic}}}
\cdots
\sum_{\substack{N_{J} \geq (2J+1)^3 \wt M_{J-1}^{1-\dl}\\ \text{dyadic}}}
N_1^{-\frac{1}{2}+} N_J^{-\frac{\dl}{2}+}
\notag\\
& \hphantom{XXXXXXXX}
 \times 
\prod_{j = 2}^{J-1}N_j^{-\frac12+}\|v\|_{L^2}^{2J+3}
\notag\\
& \les 
\frac{c_{J+1}(2J+3)^{\frac{3}{2}+}}{\prod_{j = 2}^{J-1}(2j+1)^{\frac{3}{2}- }}
 \sum_{\substack{N_1\geq N\\\text{dyadic}}}
N_1^{-\frac{1}{2}+}N_1^{ - \frac{\dl}{2}+\frac{\dl^2}{2}+}
N_1^{-\frac{J-2}2+ \frac{J-2}{2}\dl +}\|v\|_{L^2}^{2J+3}
\notag \\
& \les 
 \sum_{\substack{N_1\geq N\\\text{dyadic}}}
N_1^{-\frac{J-1}{2}+\frac{J-3}{2}\dl+\frac{\dl^2}{2}+}\|v\|_{L^2}^{2J+3}
 \les 
N^{-\frac{J-1}{2}+\frac{J-3}{2}\dl+\frac{\dl^2}{2}+}\|v\|_{L^2}^{2J+3}.
\end{align*}

Next, we consider the contribution from the case $\wt M_J \sim N_1 $.
Proceeding as above, we have
\begin{align*}
\|\N^{(J+1)}_1 & (v) \|_{L^2} 
 \leq  c_{J+1} \sup_{\TT\in \TF(J)} \| \SF_1( \TT;  v)\|_{L^2} \notag\\
& \les 
c_{J+1}
 \sum_{\substack{N_1\geq N\\\text{dyadic}}}
\sum_{\substack{N_2 \geq  5^3 \wt M_1^{1-\dl}\\ \text{dyadic}}}
\cdots
\sum_{\substack{N_{J} \geq (2J+1)^3 \wt M_{J-1}^{1-\dl}\\ \text{dyadic}}}
N_1^{-\frac{1}{2}+}
\prod_{j = 2}^{J}N_j^{-\frac12+}
\notag\\
& \hphantom{XXXXXXXX}
 \times \sum_{\substack{N_{J+1} \leq 2^{-1}\cdot  (2J +3 )^3  N_1^{1-\dl}\\ \text{dyadic}}}
N_{J+1}^{\frac{1}{2}+}\|v\|_{L^2}^{2J+3}
\notag \\
& \les c_{J+1}(2J+3)^{\frac{3}{2}+}
 \sum_{\substack{N_1\geq N\\\text{dyadic}}}
\sum_{\substack{N_2 \geq  5^3 \wt M_1^{1-\dl}\\ \text{dyadic}}}
\cdots
\sum_{\substack{N_{J} \geq (2J+1)^3 \wt M_{J-1}^{1-\dl}\\ \text{dyadic}}}
 N_1^{-\frac{\dl}{2}+}
\prod_{j = 2}^{J}N_j^{-\frac12+}\|v\|_{L^2}^{2J+3}
\notag\\
& \les 
\frac{c_{J+1}(2J+3)^{\frac{3}{2}+}}{\prod_{j = 2}^{J}(2j+1)^{\frac{3}{2}- }}
 \sum_{\substack{N_1\geq N\\\text{dyadic}}}
N_1^{-\frac{J-1}{2}+\frac{J-2}{2}\dl+}\|v\|_{L^2}^{2J+3}
\notag \\
& \les 
N^{-\frac{J-1}{2}+\frac{J-2}{2}\dl+}\|v\|_{L^2}^{2J+3}.
\end{align*}

\noi
This completes the proof of Lemma \ref{LEM:jg}.
\end{proof}

\begin{remark}\rm
As mentioned at the beginning of this section, 
we can perform an analogous analysis for the mKdV \eqref{mKdV}.
In this case, it follows from Lemmas \ref{LEM:KdV1} and \ref{LEM:KdV2}
that 
Lemmas~\ref{LEM:jg0} and \ref{LEM:jg}
hold for $s \geq \frac 14$.
\end{remark}

%%%%%%%%%%%%%%%%%%%%%%%%%%%%%%

\subsection{Normal form equation}
\label{SUBSEC:3.4}

After the $J$th step of the normal form reductions, 
we transformed the original equation \eqref{NLS2}
to 
\begin{align}
\dt v (\xi)& = \N_{\leq N}(v)(\xi) + \N_{> N}(v)(\xi)\notag \\
& = \sum_{j=2}^{J+1}
\partial_t\N^{(j)}_{0}(v) (\xi)+\sum_{j=1}^{J+1}\N_{1}^{(j)}(v) (\xi)+\N_{2}^{(J+1)}(v) (\xi).
\label{NF0}
\end{align}

\noi
By iterating this procedure indefinitely, 
we {\it formally}\footnote{This means that 
the derivation can be easily justified for  smooth solutions
but not for rough solutions.
Here, we assume that the remainder term $\N^{(J+1)}_2(v)(\xi)$ tends to $0$
as $J \to \infty$.
In Section \ref{SEC:4}, 
we justify all the computations
for rough functions, namely, 
$u \in C_t H^s$ with  $s \geq \frac16$
for the cubic NLS
and $s> \frac 14$ for the mKdV.}
arrive at the following limit equation:
\begin{equation}\label{NF1}
\partial_t v(\xi) = 
\partial_t\bigg( \sum_{j=2}^{\infty}\N^{(j)}_{0}(v) (\xi)\bigg)+\sum_{j=1}^{\infty}\N_{1}^{(j)}(v) (\xi).
\end{equation}

\noi
Integrating \eqref{NF1} in time and applying the Fourier inversion formula, we 
obtain the following  {\it normal form equation}:
\begin{align} 
v(t) & = \G_{u_0}(v)\notag\\
:  \! & =u_0 + \bigg[\sum_{j=2}^\infty  \N^{(j)}_0 (v(t)) - \sum_{j=2}^\infty\N^{(j)}_0(u_0) \bigg]
+ \int_0^t  \sum_{j=1}^\infty \N^{(j)}_1(v(t')) dt'.
\label{NF2}
\end{align}

\begin{theorem}\label{THM:NF}
The normal form equation \eqref{NF2} is unconditionally locally well-posed in $H^s(\R)$
with 
\begin{align}
\text{\textup{(i)} $s \geq 0$ for the cubic NLS \eqref{NLS}\quad 
and \quad \textup{(ii)} $s \geq \tfrac 14$ for the mKdV \eqref{mKdV}.}
\label{reg}
\end{align}
\end{theorem}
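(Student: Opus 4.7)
\emph{Proof proposal.} The plan is to establish Theorem \ref{THM:NF} by Banach's fixed point theorem applied to the map $\G_{u_0}$ from \eqref{NF2}, viewed on a closed ball in $C([-T,T]; H^s(\R))$. Fix $s$ as in \eqref{reg}, set $R := 2\|u_0\|_{H^s}$, and consider
\[
B_R := \bigl\{ v \in C([-T,T]; H^s(\R)) : \|v\|_{L^\infty_T H^s_x} \leq R \bigr\}.
\]
The two parameters at our disposal are the modulation threshold $N \gg 1$ and the time horizon $T > 0$; they will be tuned so that $\G_{u_0}$ sends $B_R$ into itself and is a $\tfrac{1}{2}$-Lipschitz contraction.

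For the boundary sum I would invoke Lemma \ref{LEM:jg0} with a small $\delta \in (0,1)$: for $v \in B_R$,
\[
\Big\| \sum_{j=2}^\infty \N^{(j)}_0(v) \Big\|_{L^\infty_T H^s_x}
\lesssim R^3 N^{-\frac{1}{2}+} \sum_{k=0}^\infty \bigl( N^{-\frac{1-\delta}{2}+} R^2 \bigr)^{k},
\]
and similarly for $\sum_j \N^{(j)}_0(u_0)$. Choosing $N = N(R)$ large enough that $N^{-(1-\delta)/2+} R^2 \leq \tfrac{1}{2}$, the geometric series converges and the total boundary contribution is $\lesssim N^{-1/2+} R^3$; enlarging $N$ further (concretely $N \sim R^{4+}$) renders this $\leq R/10$. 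For the time integral, the $j=1$ piece is $\N^{(1)}_1(v)=\N_{\leq N}(v)$, bounded by \eqref{1g} (or its mKdV analogue via Lemma \ref{LEM:KdV1}) as $\lesssim N^{1/2+} R^3$, while Lemma \ref{LEM:jg} gives
\[
\Big\| \sum_{j=2}^\infty \N^{(j)}_1(v) \Big\|_{L^\infty_T H^s_x} \lesssim R^5 N^{-\frac{\delta}{2}+} \sum_{k=0}^\infty \bigl( N^{-\frac{1-\delta}{2}+} R^2 \bigr)^{k} \lesssim R^5.
\]
Thus the total time-integral term is bounded by $T(N^{1/2+} R^3 + R^5)$, which is $\leq R/10$ provided $T \cdot N^{1/2+} R^2 \lesssim 1$, fixing $T = T(\|u_0\|_{H^s}) > 0$. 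Combining these estimates gives $\G_{u_0}(v) \in B_R$, and the contraction property
\[
\|\G_{u_0}(v) - \G_{u_0}(w)\|_{L^\infty_T H^s_x} \leq \tfrac{1}{2} \|v - w\|_{L^\infty_T H^s_x}
\]
follows identically from the difference bounds \eqref{R2} (or \eqref{K2}) and the second inequalities in Lemmas \ref{LEM:jg0} and \ref{LEM:jg}, yielding a unique fixed point in $B_R$.

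Unconditional uniqueness is then immediate: given any two solutions $v_1, v_2 \in C([-T,T]; H^s(\R))$ of \eqref{NF2} with the same datum $u_0$, set $R := 2 \max(\|v_1\|_{L^\infty_T H^s_x}, \|v_2\|_{L^\infty_T H^s_x}, \|u_0\|_{H^s})$ and rerun the argument with this $R$; both $v_i$ lie in $B_R$ on $[0, T_\star]$ for some $T_\star = T_\star(R) > 0$ and the contraction forces $v_1 \equiv v_2$ there, after which a standard bootstrap in time covers $[-T,T]$. No auxiliary function space (Strichartz, $X^{s,b}$, Bourgain) enters at any point, which is the content of ``unconditional''.

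The main obstacle is to guarantee that the combinatorial growth $c_J \sim (2J-1)!!$ inherent in enumerating ordered trees of the $J$th generation does not destroy the absolute convergence of the series in \eqref{NF2}. This is precisely what the factor $\prod_{j=2}^J (2j+1)^{-3/2+}$ produced by the modulation thresholds $(2j+1)^3 M_{j-1}^{1-\delta}$ in \eqref{CJ} is engineered to compensate, as made explicit in the computation \eqref{jg03}. Since this factorial balance is already wired into Lemmas \ref{LEM:jg0} and \ref{LEM:jg}, the proof reduces to the two-parameter tuning above, and no further analytic input beyond the localized modulation estimates of Section \ref{SEC:2} is needed.
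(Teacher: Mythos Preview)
Your proposal is correct and follows essentially the same route as the paper: Banach fixed point for $\G_{u_0}$ on a ball in $C_TH^s$, with Lemmas \ref{LEM:jg0} and \ref{LEM:jg} supplying the summable multilinear bounds, $N$ chosen first to make the boundary series geometric, then $T$ small to absorb the $N^{1/2+}$ loss from $\N^{(1)}_1$, and finally a time-bootstrap for uniqueness in the whole of $C([-T,T];H^s)$. The only cosmetic difference is that the paper takes $R=1+\|u_0\|_{H^s}$ (avoiding the degenerate case $u_0=0$) and phrases the uniqueness upgrade as a ``standard continuity argument'' rather than spelling out the re-centering of $R$ as you do.
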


\begin{proof}
Given $u_0 \in H^s(\R)$, let $R = 1 + \|u_0\|_{H^s}$.	
Given $T > 0$, we use $B_{2R}$ to denote
the closed ball of radius $2R$ in $C_TH^s: = C([0, T]; H^s(\R))$ centered at the origin.
By \eqref{1g}, Lemmas \ref{LEM:2g0} 
- \ref{LEM:3g}, 
\ref{LEM:jg0}, and \ref{LEM:jg}, we have
\begin{align}
\|\G_{u_0}  (v)\|_{C_TH^s}  
\leq  \ \|u_0\|_{H^s} & +   
C \sum_{j = 2}^J N^{-\frac{j-1}{2} +\frac{j-2}{2}\dl+} 
\big(\|v\|_{C_TH^s}^{2j-1} + \|u_0\|_{H^s}^{2j-1}\big)\notag \\
  &   + 
CT \bigg\{
 N^{\frac{1}{2}+}\|v\|_{C_TH^s}^3
 +\sum_{j = 2}^J 
 N^{-\frac{j-2}{2} +\frac{j-3}{2}\dl+}
 \|v\|_{C_TH^s}^{2j+1}\bigg\}
 \label{NF3}
\end{align}

\noi
for $s$ satisfying \eqref{reg}.
Note that 
the estimate \eqref{1g} on $\N_1^{(1)}$
is the only estimate with a positive power of $N$.
However, it appears inside the time integral in \eqref{NF2}.
This allows us to (i)~choose $N = N(R) \ges 1$, 
guaranteeing the convergence of the geometric series in \eqref{NF3}
for $v \in B_{2R}$,
and then (ii)~choose $T = T(N) = T(R)>0$ sufficiently small
to control $T  N^{\frac{1}{2}+}\|v\|_{C_TH^s}^3$.
A similar estimate also holds on the difference
$\|\G_{u_0}  (v) - \G_{u_0}  (w)\|_{C_TH^s} $
for $v, w \in B_{2R}$. 
Then, by a standard fixed point argument, 
we can show that the normal form equation~\eqref{NF2} is locally well-posed 
in $C([0,T]; H^s(\R))$, 
provided that  $s$ satisfies \eqref{reg}.
While the fixed point argument yields
this uniqueness only in the ball $B_{2R}\subset C([0,T]; H^s(\R))$, 
we can apply a standard continuity argument
to upgrade uniqueness to that in the entire
 $C([0,T]; H^s(\R))$ (by possibly shrinking
 the local existence time).
 See Remark 2.9 in \cite{CGKO} for example. 
Lastly,  by considering the difference
of two solutions
$v_1, v_2 \in C([0, T]; H^s(\R))$ to \eqref{NF2}, 
we also obtain 
\begin{equation*}\label{NF4}
\|v_1-v_2\|_{C_TH^s} \lesssim  \|v_1(0) -v_2(0) \|_{H^s}
\end{equation*}

\noi
for small $T= T(\|v_1(0)\|_{H^s}, \|v_2(0)\|_{H^s})>0$. 
This proves Theorem \ref{THM:NF}.
\end{proof}

In the following, we sketch the proofs of Theorems \ref{THM:NLS2} and \ref{THM:mKdV2},
assuming that smooth solutions to the cubic NLS \eqref{NLS} (or the mKdV \eqref{mKdV})
  satisfy the normal form equation~\eqref{NF2} (which we will show in the next section).
By  starting with two smooth solutions 
$u_1, u_2 \in C([0, T]; H^\infty(\R))$ to the cubic NLS \eqref{NLS}
(or the mKdV \eqref{mKdV}), 
 the above analysis yields
 \begin{equation}\label{NF5}
\|u_1-u_2\|_{C_TH^s} \lesssim  \|u_1(0) -u_2(0) \|_{H^s}
\end{equation}

\noi
for $s$ satisfying \eqref{reg}.
The difference estimate~\eqref{NF5} in particular
implies the convergence of approximating solutions
(to a unique limiting function),
yielding 
the local well-posedness
in the sense of sensible weak solutions claimed in Theorems~\ref{THM:NLS2}
and~\ref{THM:mKdV2}.
See \cite{OW2} for details.
Furthermore, 
arguing as in~\cite{GKO}, 
we can also show that solutions to the normal form equation~\eqref{NF2}
are indeed weak solutions in the extended sense  to the original equation. 
Since the argument is straightforward, we omit details.

If we justify that  solutions
to the cubic NLS in $C([0, T]; H^s(\R))$, $s\geq \frac{1}{6}$
(and $s> \frac 14$ for the mKdV),  
indeed satisfy the normal form equation \eqref{NF2}, 
then
the difference estimate~\eqref{NF5}
yields uniqueness.
Since our analysis does not involve any auxiliary function spaces, 
such uniqueness is unconditional, thus establishing
Theorems \ref{THM:NLS} and \ref{THM:mKdV}.
In the next section, 
we justify all the steps of the normal form reductions
and thus the derivation of the normal form equation \eqref{NF2}
under the regularity assumption above.

\section{Unconditional well-posedness}\label{SEC:4}

In this section, we present the proof of Theorems \ref{THM:NLS} and \ref{THM:mKdV}.
In view of the (conditional) well-posedness results in $H^s(\R)$:
 $s \geq 0$ for the cubic NLS \cite{Tsu} and $s \geq \frac{1}{4}$ for the mKdV \cite{KPV93,  CKSTT, Kishimoto0},
 we focus on proving unconditional uniqueness, locally in time.
As mentioned above, 
the main task is to make the formal computations in Section \ref{SEC:3} rigorous.
Once this is achieved, the difference estimate \eqref{NF5} yields unconditional uniqueness.
In the following, 
we justify 
\begin{itemize}

\item[(i)] the application of the product rule and 

\item[(ii)] switching time derivatives and integrals in spatial frequencies
 (for each fixed $\xi \in \R$)

\end{itemize}

\noi
in the normal form reductions \eqref{2g}, \eqref{3g}, and \eqref{jg}.
Moreover, we show that 
\begin{itemize}
\item[(iii)]
the remainder term  
 $ \N_{2}^{(J+1)}(v)(\xi)$ in \eqref{NF0} tends to $0$ as $ J\to \infty $
 (for each fixed $\xi \in \R$).\footnote{ This part is not explicitly written in \cite{GKO}.
It is, however, easy to see that  the computation in \cite[(5.3)]{GKO}
 and its generalization for the $J$th generation (which follows
 as a minor modification of \cite[Lemma 3.11]{GKO} with 
 \eqref{NZ0a} below) would imply (iii) for the cubic NLS on  $\T$.}

\end{itemize}

\noi
It is crucial to note that we verify (i) - (iii) 
for each {\it fixed} $\xi \in \R$, 
namely, in a weaker topology
than the $H^s$-topology used in Section \ref{SEC:3}.
Moreover, while all the multilinear estimates (Lemmas \ref{LEM:2g0}
%\ref{LEM:2g}, \ref{LEM:3g0}, 
- \ref{LEM:3g}, 
\ref{LEM:jg0}, and \ref{LEM:jg}) 
 for the cubic NLS 
in Section \ref{SEC:3} hold  for  $s\geq 0$, 
we need an extra regularity $s \geq \frac{1}{6}$ 
in justifying (i), (ii), and (iii).
As for the mKdV, 
the regularity $s \geq \frac 14$ suffices
for the multilinear estimates 
in Section \ref{SEC:3},
while a slightly higher regularity $s > \frac{1}{4}$
is needed for 
   justifying the normal form reductions.

\subsection{Unconditional well-posedness for the cubic NLS}
\label{SUBSEC:NLS}

Let $u $ be a solution to \eqref{NLS}
in   $C(\R; H^s(\R))$
for some $s \geq \frac{1}{6}$
and let $v$ be the corresponding  interaction representation defined by \eqref{interaction}.
On the one hand, by Sobolev's inequality, we have
$|u|^2 u \in C(\R; L^1(\R))$.
On the other hand, 
it follows from  \eqref{NLS1a} and \eqref{NLS2} that 
 $\ft v(\xi)$ satisfies
\begin{align*}
 \dt \ft v(\xi, t) = i e^{-i t\xi^2} \F(|u|^2 u)(\xi, t)
 \end{align*}

\noi
for each $\xi \in \R$.
Hence, by Riemann-Lebesgue lemma, we conclude that
$\ft v(\xi)$ is a $C^1$-function in $t$
 for each fixed $\xi \in \R$.
This justifies (i)
the application of the product rule in Section \ref{SEC:3},
provided that $s \geq \frac 16$.

Next, we justify the exchange of time derivatives and  integrals in spatial frequencies.
Before proceeding further, we first need to present several multilinear estimates.
From \eqref{NLS2}
with Hausdorff-Young's inequality,  Sobolev's inequality, and the unitarity 
of the linear propagator $S(t) = e^{-it \dx^2}$, we have 
 \begin{align}
\| \dt v \|_{\F L^{\infty}}
= \| \N(v)  \|_{\F L^{\infty}}
\leq \|u \|_{L^3}^3 
\les \| u \|_{H^\frac{1}{6}}^3 =  \| v \|_{H^\frac{1}{6}}^3, 
\label{NZ0a}
\end{align}

\noi
where the $\F L^\infty$-norm is defined in \eqref{FL}.
Note that the same estimate holds
for $\N^\al _{ M}$ and $\N^\al _{\leq M}$.\footnote{In this case, 
we simply take the absolute values of the Fourier coefficients of each argument
and drop a modulation restriction.
For example, we  have
 \begin{align*}
\| \N^\al _{ M} (v) \|_{\F L^{\infty}}
\leq \big\|  \F^{-1} (|\ft v|)  \big\|_{L^3}^3 
\les \big\| \F^{-1} (|\ft v|)  \big\|_{H^\frac{1}{6}}^3 =  \| v \|_{H^\frac{1}{6}}^3, 
\end{align*}

\noi
where we used the fact that the $H^\frac{1}{6}(\R)$ is a Fourier lattice in the last step.}

We also need the following $\F L^\infty$-estimates, i.e.~uniform estimates in spatial frequencies.
Our main goal is to prove Lemma \ref{LEM:NLS5} below,
controlling the $\F L^\infty$-norms of the multilinear terms
$\N^{(J+1)}(v)$ and $\N_2^{(J+1)}(v)$ in terms of the $H^\frac{1}{6}$-norm
of $v$.

\begin{lemma}[Localized modulation estimates for the cubic NLS in the weak norm]
\label{LEM:NLS3}
Given $\eps > 0$, there exists $C_\eps > 0$  such that 
\begin{align}
\label{NZ1}
\|\N^\al _{\leq M}(v_1, v_2, v_3) \|_{\F L^{\infty}}
& \leq C_\eps 
M^\frac{1}{2}
\min_{j = 1, 2, 3} \bigg(
\|v_j\|_{\F L^{\infty}}
  \prod_{\substack{k = 1\\k\ne j}}^3
\|v_k\|_{H^\eps} \bigg)
\end{align}

\noi
for any $M\geq 1$ and $\al  \in \R$.
\end{lemma}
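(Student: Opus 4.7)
The plan is to fix $\xi, \al \in \R$ and $j \in \{1, 2, 3\}$, and pull $\|v_j\|_{\F L^\infty}$ out of the $j$th factor in the definition \eqref{NLS2a} of $\N^\al_{\leq M}$. This reduces the lemma to proving the uniform (in $\xi, \al$) bound
\[
I := \intt_{\substack{\xi = \xi_1 - \xi_2 + \xi_3 \\ |\Phi(\bar\xi) - \al|\leq M}} |\widehat{v_{k_1}}(\xi_{k_1})|\, |\widehat{v_{k_2}}(\xi_{k_2})|\, d\xi_{k_1}\,d\xi_{k_2} \leq C_\eps M^{\frac{1}{2}}\|v_{k_1}\|_{H^\eps}\|v_{k_2}\|_{H^\eps},
\]
where $\{k_1, k_2\} = \{1, 2, 3\}\setminus\{j\}$ and $\xi_j$ is determined by the linear constraint.

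\textbf{Strategy.} Using the factorization $\Phi(\bar\xi) = 2(\xi - \xi_1)(\xi - \xi_3) = 2(\xi_2 - \xi_1)(\xi_2 - \xi_3)$ from \eqref{Phi1}, I identify the two linear factors whose product determines the modulation, and split $I$ into regions according to whether each factor has absolute value $\leq 1$ or $\geq 1$. In the region where both factors are $\leq 1$, the integration is confined to a bounded neighborhood of $\xi$ and the estimate follows at once from Cauchy--Schwarz together with the embedding $L^2 \hookrightarrow H^\eps$, absorbing the constant via $M \geq 1$.

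\textbf{Main region.} On the region where, say, $|\xi - \xi_1| \geq 1$ (this already treats the symmetric case $j = 2$ in its entirety), the modulation constraint forces $\xi_3$ into an interval of length $M/|\xi - \xi_1|$ for each such $\xi_1$. A first Cauchy--Schwarz in $\xi_3$ produces the factor $(M/|\xi - \xi_1|)^{1/2}\|v_{k_2}\|_{H^\eps}$, while a second Cauchy--Schwarz in $\xi_1$ with the weight $\jb{\xi_1}^{-2\eps}/|\xi - \xi_1|$ reduces the matter to the elementary uniform bound
\[
\sup_{\xi \in \R}\int_{|\xi - \xi_1|\geq 1}\frac{\jb{\xi_1}^{-2\eps}}{|\xi - \xi_1|}\, d\xi_1 \leq C_\eps, \qquad \eps > 0.
\]
Splitting $|\xi - \xi_1|\sim 2^k$ dyadically, the scales $2^k \ll \jb{\xi}$ contribute only a logarithmic factor $\log\jb{\xi}$, absorbed by the weight decay $\jb{\xi}^{-2\eps}$, while the scales $2^k \gtrsim \jb{\xi}$ give a convergent geometric sum $\sum_k 2^{-2k\eps}$.

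\textbf{Main obstacle.} The cases $j \in \{1, 3\}$ will require a finer analysis, since the relevant factorization (e.g.\ $\Phi = 2(\xi_3 - \xi_2)(\xi - \xi_3)$ after pulling out $\|v_1\|_{\F L^\infty}$) is not symmetric in the free variables $(\xi_2, \xi_3)$. The delicate subregion is where $|\xi - \xi_3|$ is small while $|\xi_3 - \xi_2| \geq 1$: the $\xi_2$-interval cut out by $|\Phi - \al|\leq M$ then has length $\sim M/|\xi - \xi_3|$, possibly arbitrarily large, so a naive Cauchy--Schwarz in $\xi_2$ leaves a divergent outer integration in $\xi_3$. The plan to resolve this is to extract an $\eps$-gain from the Sobolev weight via H\"older's inequality: on any interval $\mathcal{J} \subset \R$ of length $L$,
\[
\intt_{\mathcal{J}} \jb{\xi_2}^{-2\eps}\, d\xi_2 \leq C_\eps L^{1 - 2\eps},
\]
uniformly in the location of $\mathcal{J}$ (using $\jb{\cdot}^{-2\eps} \in L^p(\R)$ for any $p > 1/(2\eps)$). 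With $L = M/|\xi - \xi_3|$ this produces a weight $|\xi - \xi_3|^{-1 + 2\eps}$ in the outer integration, which is integrable on $\{|\xi - \xi_3|\leq 1\}$ for any $\eps > 0$, yielding the net contribution $\leq C_\eps M^{1/2 - \eps}\|v_2\|_{H^\eps}\|v_3\|_{H^\eps}\leq C_\eps M^{1/2}\|v_2\|_{H^\eps}\|v_3\|_{H^\eps}$ via $M \geq 1$. The case $j = 3$ follows from the $\xi_1 \leftrightarrow \xi_3$ symmetry.
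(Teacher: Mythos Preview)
Your proof is correct and follows essentially the same blueprint as the paper: reduce to a pointwise-in-$\xi$ estimate, split according to the size of the two linear factors in $\Phi(\bar\xi)=2(\xi-\xi_1)(\xi-\xi_3)$, and control the main region by Cauchy--Schwarz with an $\eps$-weight so that the resulting one-variable integral $\int_{|\xi-\xi_k|\ge 1}\jb{\xi_k}^{-2\eps}|\xi-\xi_k|^{-1}\,d\xi_k$ is finite.

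The one point where your route genuinely diverges from the paper is the asymmetric cases $j\in\{1,3\}$. You isolate the ``hard'' subregion $|\xi-\xi_3|\le 1$, $|\xi_3-\xi_2|\ge 1$ and rescue it with the interval bound $\int_{\mathcal J}\jb{\xi_2}^{-2\eps}\,d\xi_2\lesssim |\mathcal J|^{1-2\eps}$, which converts the naively divergent weight $|\xi-\xi_3|^{-1}$ into the integrable $|\xi-\xi_3|^{-1+2\eps}$. The paper instead avoids this subregion altogether: for $j=3$ it places the weight on the \emph{difference} $\jb{\xi-\xi_3}^{-\eps}$ (legitimate since $\jb{\xi-\xi_3}^\eps=\jb{\xi_1-\xi_2}^\eps\lesssim\jb{\xi_1}^\eps\jb{\xi_2}^\eps$), which makes the outer integral $\int_{|\xi-\xi_3|>1}(\xi-\xi_3)^{-1-\eps}\,d\xi_3$ converge outright, and then invokes the $\xi_1\leftrightarrow\xi_3$ symmetry for the remaining piece. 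Your argument is slightly more hands-on but equally valid; the paper's weight choice is a bit slicker since it dispatches all three values of $j$ with a single Cauchy--Schwarz template and no separate subregion analysis.
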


%
%We use Lemma \ref{LEM:NLS3} in a combination with \eqref{NZ0a}
%to prove Lemma \ref{LEM:NLS5}.
%the $\eps$-regularity loss is  

\begin{proof}
%As in the proof of Lemma \ref{LEM:NLS1}, it 
% suffices to prove \eqref{NZ1} for $s = 0$.
%Moreover, 
By duality,  \eqref{NZ1} follows once we prove\footnote{Recall 
our convention of denoting $\ft v(\xi)$ by $v(\xi)$ when there is no confusion.
See Remark \ref{REM:tdepend}.}
 \begin{align}
\Bigg|
\intt_{\xi = \xi_1 - \xi_2 + \xi_3} 
\ind_{|\Phi(\bar{\xi})-\al |\leq M}
& v_1(\xi_1) v_2(\xi_2)v_3(\xi_3)v_4(\xi) d\xi_1 d\xi_2 d\xi
\Bigg|\notag\\
& \lesssim 
M^\frac{1}{2}%\jb{\al }^{0+} M^{\frac{1}{2}+}
\min_{j = 1, 2, 3} \bigg(\|v_j\|_{L^\infty_\xi}  \prod_{\substack{k = 1\\k\ne j}}^3 \| \jb{\xi}^\eps v_k\|_{L^2_\xi}\bigg)
 \|v_4\|_{L^1_\xi}
\label{NZ2}
\end{align}

\noi
for all non-negative functions $v_1, \dots, v_4\in L^2_\xi(\R)$.

\medskip

\noi
$\bullet$ {\bf Case 1:}  $\max(|\xi_{2-1}|, |\xi_{2-3}|)\leq 1$.
\\
\indent
Let $\zeta = \xi_2 - \xi_1$
and $\wt \zeta = \xi_2 - \xi_3$. 
Then, thanks to the restriction $|\zeta|, |\wt \zeta|\leq 1$, we have
\begin{align*}
\text{LHS of }\eqref{NZ2}
& \leq \sup_\xi  \bigg|\int_{|\zeta|\leq 1}  
\int_{|\wt \zeta|\leq 1}  
 v_1(\xi + \wt \zeta) v_2 (\xi + \zeta + \wt \zeta )  v_3 (\xi + \zeta)  d\wt \zeta d\zeta\bigg|
\cdot \|v_4\|_{L^1_\xi} \\
& \les
\min_{j = 1, 2, 3} \bigg(\|v_j\|_{L^\infty_\xi}  \prod_{\substack{k = 1\\k\ne j}}^3 \|v_k\|_{L^2_\xi}\bigg)
\|v_4\|_{L^1_\xi}. 
\end{align*}

\noi
$\bullet$ \textbf{Case 2:} 
 $\max(|\xi_{2-3}|, |\xi_{2-1}|)>1$.
\\
\indent
Without loss of generality, assume that $\xi - \xi_3 >1$.
Proceeding as in \eqref{A2a} with  \eqref{A2}, we have 
\begin{align}
\text{LHS of }\eqref{NZ2}
& \leq \bigg\|
\intt_{\xi = \xi_1 - \xi_2 + \xi_3} % \xi_1-\xi_2+\xi_3 -\xi_4 = 0}
\ind_{|\Phi(\bar{\xi})-\al |\leq M}
v_1(\xi_1)v_2(\xi_2)v_3(\xi_3) d\xi_1 d\xi_3\bigg\|_{L^\infty_{\xi}}\|v_4 \|_{L^1_\xi}\notag\\
& \leq \sup_{\xi}
\bigg(\intt_{\xi = \xi_1 - \xi_2 + \xi_3} % \xi_1-\xi_2+\xi_3 -\xi_4 = 0}
\ind_{|\Phi(\bar{\xi})-\al |\leq M} \jb{\xi_3}^{-\eps} d\xi_1 d\xi_3 \bigg)^\frac{1}{2}
\|v_1\|_{L^2_\xi} \|v_2\|_{L^\infty_\xi} \|\jb{\xi}^\eps v_3\|_{L^2_{\xi}}
 \|v_4\|_{L^1_\xi}\notag\\
& \leq \sup_{\xi}
 \bigg(\int_{\xi - \xi_3 > 1} % \xi_1-\xi_2+\xi_3 -\xi_4 = 0}
\frac{M}{(\xi-\xi_3)\jb{\xi_3}^\eps}d\xi_3 \bigg)^\frac{1}{2}
\|v_1\|_{L^2_\xi} \|v_2\|_{L^\infty_\xi} \|\jb{\xi}^\eps v_3\|_{L^2_{\xi}}
 \|v_4\|_{L^1_\xi}\notag\\
& \les M^\frac{1}{2}
\|v_1\|_{L^2_\xi} \|v_2\|_{L^\infty_\xi} \|\jb{\xi}^\eps v_3\|_{L^2_{\xi}}
 \|v_4\|_{L^1_\xi}.
\label{NZ2a1}
\end{align}

\noi
An analogous computation %, using \eqref{A2} with $\xi - \xi_1 = \xi_3 - \xi_2$, 
yields
\begin{align}
\text{LHS of }\eqref{NZ2}
& \les M^\frac{1}{2}
\|v_1\|_{L^\infty_\xi} \|v_2\|_{L^2_\xi} \|\jb{\xi}^\eps v_3\|_{L^2_{\xi}}
 \|v_4\|_{L^1_\xi}.
\label{NZ2a2}
\end{align}

\noi
Lastly, with $\jb{\xi - \xi_3}^\eps = \jb{\xi_1 - \xi_2}^\eps
\les  \jb{\xi_1}^\eps \jb{\xi_2}^\eps$, we have
\begin{align}
\text{LHS of }\eqref{NZ2}
& \leq \bigg\|
\intt_{\xi = \xi_1 - \xi_2 + \xi_3}
\ind_{|\Phi(\bar{\xi})-\al |\leq M}
v_1(\xi_1)v_2(\xi_2)v_3(\xi_3) d\xi_1 d\xi_3\bigg\|_{L^\infty_{\xi}}\|v_4 \|_{L^1_\xi}\notag\\
& \leq \sup_{\xi}
\bigg(\intt_{\xi = \xi_1 - \xi_2 + \xi_3} 
\ind_{|\Phi(\bar{\xi})-\al |\leq M} \jb{\xi - \xi_3}^{-\eps} d\xi_1 d\xi_3 \bigg)^\frac{1}{2}
\bigg(\prod_{j = 1}^2 \|\jb{\xi}^\eps v_j\|_{L^2_\xi} \bigg) \| v_3\|_{L^\infty_{\xi}}
 \|v_4\|_{L^1_\xi}\notag\\
& \leq \sup_{\xi}
 \bigg(\int_{\xi - \xi_3 > 1} 
\frac{M}{(\xi-\xi_3)\jb{\xi - \xi_3}^\eps}d\xi_3 \bigg)^\frac{1}{2}
\bigg(\prod_{j = 1}^2 \|\jb{\xi}^\eps v_j\|_{L^2_\xi} \bigg) \| v_3\|_{L^\infty_{\xi}}
 \|v_4\|_{L^1_\xi}\notag\\
& \les M^\frac{1}{2}
\bigg(\prod_{j = 1}^2 \|\jb{\xi}^\eps v_j\|_{L^2_\xi} \bigg) \| v_3\|_{L^\infty_{\xi}}
 \|v_4\|_{L^1_\xi}.
\label{NZ2a3}
\end{align}

\noi
Putting \eqref{NZ2a1}, \eqref{NZ2a2}, and \eqref{NZ2a3},
we obtain \eqref{NZ2} in this case.
\end{proof}

As a corollary to Lemma \ref{LEM:NLS3}, we have the following
estimates on $\I^\al_M$ and $\I^\al_{> M}$.

\begin{lemma}\label{LEM:NLS4}
Given $\eps > 0$, there exists $C_\eps > 0$  such that 
\begin{align*}
%\label{NZ3}
\|\I^\al _{ M}(v)\|_{\F L^{ \infty}}& \leq C_\eps M^{-\frac{1}{2}}
\|v\|_{H^\eps}^2 \|v\|_{\F L^{ \infty}},\\
\|\I^\al _{> M}(v)\|_{\F L^{\infty}}& \leq C_\eps  M^{-\frac{1}{2}}
\|v\|_{H^\eps}^2 \|v\|_{\F L^{\infty}}, 
%\label{NZ4}
\end{align*}

\noi
for any $M\geq 1$ and $\al  \in \R$.
%where the implicit constants are independent of  $\al  \in \R$ and $M\geq 1$.

\end{lemma}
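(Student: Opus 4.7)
The plan is to deduce Lemma \ref{LEM:NLS4} from Lemma \ref{LEM:NLS3} by exactly the same argument used to derive Lemma \ref{LEM:NLS2} from Lemma \ref{LEM:NLS1}. The key observation is that the proof of Lemma \ref{LEM:NLS3} did not exploit the oscillatory factor $e^{-i\Phi(\bar{\xi})t}$ in any way (it was bounded in absolute value by $1$ at the very first step); consequently the same estimate holds if $e^{-i\Phi(\bar{\xi})t}$ is replaced by any bounded multiplier that is defined on the support of the frequency localization.

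For the first estimate, on the support $\{|\Phi(\bar{\xi})-\al|\sim M\}$ the symbol $\frac{1}{\Phi(\bar{\xi})-\al}$ satisfies $\big|\frac{1}{\Phi(\bar{\xi})-\al}\big|\les M^{-1}$. Hence taking absolute values inside the defining integral of $\I^\al_M(v)(\xi)$ and applying Lemma \ref{LEM:NLS3} (with, say, the minimum achieved by placing the $\F L^\infty$-norm on one fixed factor and the $H^\eps$-norms on the other two) yields
\[
\|\I^\al_M(v)\|_{\F L^\infty}\les M^{-1}\,\|\N^\al_M(v)\|_{\F L^\infty}\les M^{-1}\cdot M^{\frac12}\|v\|_{H^\eps}^2\|v\|_{\F L^\infty}=M^{-\frac12}\|v\|_{H^\eps}^2\|v\|_{\F L^\infty}.
\]

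For the second estimate, dyadically decompose the region $\{|\Phi(\bar{\xi})-\al|>M\}$ into shells $\{N<|\Phi(\bar{\xi})-\al|\leq 2N\}$ for dyadic $N\geq M$, apply the first estimate on each shell, and sum the resulting geometric series:
\[
\|\I^\al_{>M}(v)\|_{\F L^\infty}\leq\sum_{\substack{N\geq M\\\text{dyadic}}}\|\I^\al_N(v)\|_{\F L^\infty}\les\sum_{\substack{N\geq M\\\text{dyadic}}}N^{-\frac12}\|v\|_{H^\eps}^2\|v\|_{\F L^\infty}\les M^{-\frac12}\|v\|_{H^\eps}^2\|v\|_{\F L^\infty}.
\]

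There is no real obstacle here; the lemma is a direct transcription of the $H^s\to H^s$ argument of Lemma \ref{LEM:NLS2} into the Fourier--Lebesgue setting, once Lemma \ref{LEM:NLS3} is in place. The only point worth noting is that, since $\jb{\al}$ does not appear on the right-hand side, one must verify that the proof of Lemma \ref{LEM:NLS3} produces a bound independent of $\al$ (which it does: the modulation restriction is used only through its size $M$, not through the location of the support in $\Phi(\bar{\xi})$). The constant $C_\eps$ comes from the $\eps$-losses in the $\jb{\xi_3}^{-\eps}$ integrations in the proof of Lemma \ref{LEM:NLS3} and is preserved through both the pointwise bound and the dyadic summation.
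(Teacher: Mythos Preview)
Your proposal is correct and follows exactly the approach the paper has in mind: the paper simply states that Lemma~\ref{LEM:NLS4} is a corollary to Lemma~\ref{LEM:NLS3}, by the same mechanism used to derive Lemma~\ref{LEM:NLS2} from Lemma~\ref{LEM:NLS1}. One small notational caution: the intermediate inequality $\|\I^\al_M(v)\|_{\F L^\infty}\les M^{-1}\|\N^\al_M(v)\|_{\F L^\infty}$ is not literally true as written (the phase is lost when taking absolute values), but since you explicitly pass to absolute values first and since Lemma~\ref{LEM:NLS3} is proved via \eqref{NZ2} for nonnegative functions, the conclusion is valid.
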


Let $\N^{(J+1)}$ be as in \eqref{jg'}.
Then,  from \eqref{jg}, we have
\begin{align}
\N^{(J+1)}(v)(\xi) 
&= \sum_{\TT_{J+1}\in\TF(J+1)}
\intt_{\substack{C_0\, \cap\, \bigcap_{j=1}^{J-1}C_j^c\\
\pmb{\xi}\in\Xi(\TT_{J+1})\\ \pmb{\xi}_r=\xi}}
\frac{e^{- i\wt{\mu}_{J+1}t}}{\prod_{j=1}^{J}\wt{\mu}_j}\prod_{a\in\TT_{J+1}^\infty}v_{\xi_a}\notag \\
& 
=  
\sum_{\TT_{J}\in\TF(J)}
\sum_{b \in\TT_{J}^\infty}
\intt_{\substack{C_0\,\cap\,\bigcap_{j=1}^{J-1}C_j^c\\
\pmb{\xi}\in\Xi(\TT_{J})\\ \pmb{\xi}_r=\xi}} 
\frac{e^{- i\wt{\mu}_Jt}}{\prod_{j=1}^{J}\wt{\mu}_j}
\, \dt v_{\xi_b}\prod_{a\in\TT_{J}^\infty\setminus \{b\} }v_{\xi_a}.
\label{jg'1}
\end{align}

\noi
Now, given $\TT_{J}\in\TF(J)$, 
we label its terminal nodes by $a_1, \dots, a_{2J+1}$.
Then, it follows from Definition \ref{DEF:S}
and \eqref{jg'1}  with 
\eqref{X1} and  Remark \ref{REM:multi}
 that 
\begin{align}
\N^{(J+1)}(v) 
& = 
\sum_{\TT_{J}\in\TF(J)}
\sum_{k = 1}^{2J+1}
\SF_0\bigg( \TT_J;  {\bf v}_k\bigg), 
\label{jg'2}
\end{align}

\noi
where 
${\bf v}_k = (v, \dots, v, \underbrace{\dt v}_{k\text{th spot}}, v, \dots, v)$.
Compare this with $\N_0^{(J+1)}$ in \eqref{jg} and  \eqref{X2}.

\begin{lemma}\label{LEM:NLS5}
Let $\N^{(J+1)}(v)$ be as in \eqref{jg'}.
Then, %given  any $\eps > 0$, 
we have
\begin{align}
\|\N^{(J+1)}(v)\|_{\F L^\infty}
& \les N^{-\frac{J}{2}+\frac{J-1}{2}\dl}\|v\|_{H^\frac{1}{6}}^{2J+ 3}, 
\label{jg'3}\\
\|\N_2^{(J+1)}(v)\|_{\F L^\infty}
& \les N^{-\frac{J}{2}+\frac{J-1}{2}\dl} \|v\|_{H^\frac{1}{6}}^{2J+3}, 
\label{jg'5}
\end{align}

\noi
for $0< \dl < 1$.

\end{lemma}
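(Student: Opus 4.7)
The plan is to derive both bounds by combining the representation \eqref{jg'2} for $\N^{(J+1)}(v)$ with the crucial input \eqref{NZ0a}, $\|\partial_t v\|_{\F L^\infty}\lesssim \|v\|_{H^{1/6}}^3$, which permits a $\partial_t v$ factor to be absorbed into $\F L^\infty$ at the cost of three factors of $\|v\|_{H^{1/6}}$. Since \eqref{jg'2} expresses $\N^{(J+1)}(v) = \sum_{\TT_J\in\TF(J)}\sum_{k=1}^{2J+1} \SF_0(\TT_J;{\bf v}_k)$ with one slot of ${\bf v}_k$ carrying $\partial_t v$, the task reduces to an $\F L^\infty$ bound on each $\SF_0(\TT_J;{\bf v}_k)$, which is a nested composition of $J$ trilinear operators of type $\I^\alpha_M$.

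Fixing $\TT_J$ and $k$, the terminal node $a_k$ determines, via Remark \ref{REM:order}, a unique shortest path $P(r^{(1)},a_k)$ from the root to $a_k$. The idea is to propagate the $\F L^\infty$ norm along this path while keeping the off-path subtrees in $H^\eps$ for some small $\eps\in(0,\tfrac{1}{6}]$. Concretely, at each non-terminal node $r^{(j)}$ lying on $P$, I will invoke the natural multilinear (non-symmetric) extension of Lemma \ref{LEM:NLS4} --- which follows, by the same dyadic summation, from the multilinear form of Lemma \ref{LEM:NLS3} --- with the $\F L^\infty$ slot assigned to the on-path child and the two off-path children placed in $H^\eps$; this gains a factor $N_j^{-1/2+}$ and outputs an $\F L^\infty$ function ready to be fed into the next (outer) operator. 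At each $r^{(j)}$ off $P$, I will apply instead the multilinear extension of Lemma \ref{LEM:NLS2}, with all three inputs in $H^\eps$, again gaining $N_j^{-1/2+}$. At the terminal nodes, $v$ contributes $\|v\|_{H^\eps}\leq\|v\|_{H^{1/6}}$ and $\partial_t v$ at $a_k$ contributes $\|v\|_{H^{1/6}}^3$ via \eqref{NZ0a}, so that the terminal factors multiply to $\|v\|_{H^{1/6}}^{2J+3}$.

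The subsequent dyadic summation over modulations mirrors the telescoping calculation \eqref{jg02}--\eqref{jg03}: under the constraints $N_1\geq N$ and $N_j\geq(2j+1)^3 \wt M_{j-1}^{1-\delta}$ for $j\geq 2$, the product $\prod_{j=1}^J N_j^{-1/2+}$ sums to $N^{-J/2+(J-1)\delta/2+}$, and the combinatorial overcount $c_J\cdot(2J+1)$ (ordered trees times the choice of $k$) is absorbed by the factorial gain $\prod_{j=2}^J(2j+1)^{-3/2+}$ produced along the telescoping. This yields \eqref{jg'3}. For \eqref{jg'5}, observe that $\N_2^{(J+1)}$ is obtained from $\N^{(J+1)}$ simply by imposing the further modulation restriction $C_J^c$ on the integrand; since the $\F L^\infty$ estimate proceeds by bringing absolute values inside the integral, and the argument above never exploits a size condition on $\wt\mu_{J+1}$, such a restriction merely shrinks the domain of integration and the identical bound transfers.

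The hard part will be the multilinear, norm-asymmetric extensions of Lemmas \ref{LEM:NLS3}--\ref{LEM:NLS4} (where one slot sits in $\F L^\infty$ and the others in $H^\eps$) together with the bookkeeping required to match the on/off-path recursion against the nested operator structure of Definition \ref{DEF:S} --- in particular, verifying that the choice of the $\F L^\infty$ slot at each on-path node is compatible with the identification of the three children of $r^{(j)}$ supplied by the underlying chronicle $\{\TT_j\}_{j=1}^J$.
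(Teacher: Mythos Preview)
Your proposal is correct and follows essentially the same route as the paper's proof. The paper is quite terse---it simply says to proceed ``as in the proof of Lemma~\ref{LEM:jg0} with Lemma~\ref{LEM:NLS4}'' to obtain
\[
\|\N^{(J+1)}(v)\|_{\F L^\infty}\lesssim N^{-\frac{J}{2}+\frac{J-1}{2}\dl}\|v\|_{H^\eps}^{2J}\|\partial_t v\|_{\F L^\infty},
\]
and then invokes \eqref{NZ0a}---whereas you spell out concretely what this means: propagate the $\F L^\infty$ norm along the path $P(r^{(1)},a_k)$ using the non-symmetric multilinear form of Lemma~\ref{LEM:NLS4}, and control the off-path subtrees in $H^\eps$ via Lemma~\ref{LEM:NLS2}. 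Your handling of \eqref{jg'5} (restriction to $C_J^c$ only shrinks the domain of an absolutely convergent integral) agrees with the paper's observation that \eqref{NZ0a} continues to hold when $\partial_t v$ is replaced by its modulation-restricted cubic expansion; both arguments rely on the fact, recorded in Remark~\ref{REM:abs}, that no oscillation is exploited in these estimates.
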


\begin{proof}
We use the representation \eqref{jg'2}
for $\N^{(J+1)}(v)$.
Proceeding as in the proof of Lemma~\ref{LEM:jg0}\footnote{Note that we have an $O(J)$ loss due to the summation in $k$.  This, however, does not cause any trouble thanks to the fast decay in \eqref{jg03}.}
with Lemma \ref{LEM:NLS4}, we have 
\begin{align}
\|\N^{(J+1)}(v)\|_{\F L^\infty}
& \les N^{-\frac{J}{2}+\frac{J-1}{2}\dl}\|v\|^{2J}_{H^\eps} \|\dt v\|_{\F L^\infty}. 
\label{jg'4}
\end{align}

\noi
Then, \eqref{jg'3} follows from \eqref{jg'4} and 
\eqref{NZ0a}.
The second estimate \eqref{jg'5}
differs from \eqref{jg'3}
only in the modulation restriction $C_J^c$  for $\dt v$ 
(viewed as a cubic term).
Noting that the product estimate \eqref{NZ0a} holds even with 
the modulation restriction, 
we see that the second estimate \eqref{jg'5} follows in an analogous manner.
\end{proof}

\begin{remark}\label{REM:abs}\rm
Note that we do not make use of the oscillatory factor
in establishing the estimates \eqref{NZ0a}, \eqref{jg'3}, and \eqref{jg'5}.
In particular, the integrals in spatial frequencies in 
\eqref{NZ0a}, \eqref{jg'3}, and \eqref{jg'5} converge absolutely.
\end{remark}

Let us now consider the first step of the normal form reductions.
By rearranging \eqref{2g}, 
we have
\begin{align*}
\dt\Bigg[\intt_{\substack{\pmb{\xi}\in\Xi(\TT_1)\\\pmb{\xi}_r=\xi}}
& \ind_{C_0} \frac{e^{-i\mu_1 t}}{\mu_1}\prod_{a\in\TT_1^\infty}v_{\xi_a}\Bigg]
=
\intt_{\substack{\pmb{\xi}\in\Xi(\TT_1)\\\pmb{\xi}_r=\xi}}
\dt \bigg(\ind_{C_0} \frac{e^{-i\mu_1 t}}{\mu_1}\prod_{a\in\TT_1^\infty}v_{\xi_a}\bigg)
\notag\\
& = \intt_{\substack{\pmb{\xi}\in\Xi(\TT_1)\\\pmb{\xi}_r=\xi}}\ind_{C_0} e^{-i\mu_1 t}
\prod_{a\in\TT_1^\infty}v_{\xi_a}
+ 
\sum_{\TT_1\in\TF(1)}
\intt_{\substack{\pmb{\xi}\in\Xi(\TT_1)\\\pmb{\xi}_r=\xi}} 
\ind_{C_0}\frac{e^{-i\mu_1t}}{\mu_1}
\dt \bigg(\prod_{a\in\TT_1^\infty}v_{\xi_a}\bigg)\notag \\
& = \N^{(1)}_2(v)(\xi) - \N^{(2)}(v)(\xi).
\end{align*}

\noi
Then, in view of 
Lemma \ref{LEM:NLS5}
with   Remark \ref{REM:abs}, 
we can justify the switching of 
 the time derivative and
the integration in the first equality above
by applying 
Fubini's theorem  
to the  integrated (in time) formulation.

%Then, in view of 
%\eqref{NZ0a}, 
%Lemma \ref{LEM:NLS4}
%with   Remark \ref{REM:abs}, 
%and the continuity of $v$ and $\dt v$ in time (with values 
%in $H^\frac{1}{6}$ and $\F L^\infty$, respectively), 
%the dominated convergence theorem allows us to 
%switch the time derivative and
%the integration in the first equality above.

Similarly, 
by rearranging \eqref{jg}, we have
\begin{align*}
\dt
\Bigg[\sum_{\TT_{J}\in\TF(J)} & 
\intt_{\substack{C_0\,\cap\,\bigcap_{j=1}^{J-1}C_j^c\\
 \pmb{\xi}\in\Xi(\TT_{J})\\ \pmb{\xi}_r=\xi}} 
 \frac{e^{- i\wt{\mu}_Jt}}{\prod_{j=1}^{J}\wt{\mu}_j}\prod_{a\in\TT_{J}^\infty}v_{\xi_a}\Bigg]\notag \\
& = 
\sum_{\TT_{J}\in\TF(J)}
\intt_{\substack{C_0\,\cap\,\bigcap_{j=1}^{J-1}C_j^c\\
\pmb{\xi}\in\Xi(\TT_{J})\\ \pmb{\xi}_r=\xi}} 
\dt \bigg(\frac{e^{- i\wt{\mu}_Jt}}{\prod_{j=1}^{J}\wt{\mu}_j}\prod_{a\in\TT_{J}^\infty}v_{\xi_a}\bigg)\notag \\
& = 
\N^{(J)}_{2}(\xi) 
-  \N^{(J+1)}(\xi) 
\end{align*}

\noi
in the general case. 
Once again,  
in view of  
Lemma \ref{LEM:NLS5}
with   Remark \ref{REM:abs}, 
we can justify the switching of 
 the time derivative and
the integration in the first equality above
by applying 
Fubini's theorem  
to the  integrated (in time) formulation.

%Once again,  
% Lemma \ref{LEM:NLS5}  with Remark \ref{REM:abs}
%and the continuity of $v$ and $\dt v$ in time (with values 
%in $H^\frac{1}{6}$ and $\F L^\infty$, respectively)
%allows us to apply
%the dominated convergence theorem 
%to switch the time derivative and
%the integration in the first equality above.

Lastly, 
it follows from \eqref{jg'5} in Lemma \ref{LEM:NLS5} that, for each fixed $\xi \in \R$,  
the remainder term $\N_2^{(J+1)}(v) (\xi) $ tends to $0$
as $J \to \infty$, provided that $u \in C(\R; H^\frac{1}{6}(\R))$.
This justifies the derivation of the normal form equation\footnote{At this point, 
the normal form equation \eqref{NF2} is justified
only for each fixed $\xi \in \R$
 on the Fourier side. 
In view of Lemmas~\ref{LEM:jg0}, and \ref{LEM:jg}, we can a posteori  show that the normal form equation~\eqref{NF2} 
indeed holds in $C([0, T]; H^\frac{1}{6}(\R))$. See the proof of Proposition 2.1 in \cite{OW2}.
A similar comment applies to the mKdV.
}
\eqref{NF2}
and hence the difference estimates \eqref{NF4} and~\eqref{NF5}
for the cubic NLS.
By iterating the local-in-time argument, this yields unconditional uniqueness in the class 
$C(\R; H^\frac{1}{6}(\R))$.
This completes the proof of Theorem~\ref{THM:NLS}.

\begin{remark} \rm

As in \cite{GKO}, 
it is also possible to justify the exchange of  time derivatives and  integrals in spatial frequencies
in the distributional sense 
under a milder regularity assumption that  $v\in C(\R; L^2(\R))$.
Given a family  $\{f_\xi  \}_{\xi \in \R}$ of temporal distributions in $\mathcal D'_t$.
 we {\it define} $\int f_{\xi} d\xi \in \mathcal D'_t$ by 
\begin{align}
 \bigg\langle\int  f_{\xi} d\xi , \phi\bigg\rangle
: =  \int \jb{f_\xi, \phi} d\xi
\label{dis1}
\end{align}

\noi
for $\phi \in \mathcal{D}_t$, 
provided that the integral on the right-hand side is well defined (in the Lebesgue sense)
for each $\phi \in \mathcal D_t$.
Then, as a distributional derivative,  
$\dt \int f_{\xi} d\xi \in \mathcal D'_t$ is given by 
\begin{align*}
 \bigg\langle \dt \int  f_{\xi} d\xi , \phi\bigg\rangle 
&  = - \bigg\langle\int  f_{\xi} d\xi , \dt \phi\bigg\rangle
 \stackrel{\eqref{dis1}}{=} -  \int \jb{f_\xi,\dt  \phi} d\xi
 =   \int \jb{\dt f_\xi,  \phi} d\xi\\
&  \!\!\!\! 
 \stackrel{\eqref{dis1}}{=}   \bigg\langle \int \dt  f_{\xi} d\xi , \phi\bigg\rangle ,
 \end{align*}

\noi
provided that 
 $\int f_{\xi} d\xi $ is well defined in the sense of \eqref{dis1}.
 Namely, we have 
\begin{align}
 \dt \int  f_{\xi} d\xi
= \int \dt  f_{\xi} d\xi 
\label{dis3}
\end{align}

\noi
as elements in $\mathcal{D}'_t$,
as long as  $\int f_{\xi} d\xi $ exists. 
Compare this with Lemma 5.1 in \cite{GKO}.
As usual, we have
\begin{align}
 \bigg\langle\int  f_{\xi} d\xi , \phi\bigg\rangle
 =  \int \jb{f_\xi, \phi} d\xi
  =  \iint f_\xi(t)  \phi(t) dt  d\xi
\label{dis2}
\end{align}

\noi
for locally integrable functions $f_\xi(t)$.

Now, let us consider the exchange of
 the time differentiation and the integration in spatial frequencies in \eqref{2g}.
Lemma \ref{LEM:2g0} with $s = 0$
states that, for almost every $\xi \in \R$,  
the integral 
\[
\N_0^{(2)}(v)(\xi)
= \intt_{\substack{\pmb{\xi}\in\Xi(\TT_1)\\\pmb{\xi}_r=\xi}}
\ind_{C_0}
\frac{e^{-i\mu_1 t}}{\mu_1}\prod_{a\in\TT_1^\infty}v_{\xi_a}(t)
=: \intt_{\substack{\pmb{\xi}\in\Xi(\TT_1)\\\pmb{\xi}_r=\xi}}X(\pmb{\xi}, t) \]

\noi
converges absolutely and uniformly on compact time intervals, 
if $v \in C(\R; L^2(\R))$.
Then, for almost every $\xi \in \R$, we have

\begin{align*}
\jb{\N_0^{(2)}(v)(\xi),  \phi}
&  = \bigg\langle \intt_{\substack{\pmb{\xi}\in\Xi(\TT_1)\\\pmb{\xi}_r=\xi}}
X(\pmb{\xi}), \phi\bigg\rangle 
\stackrel{\eqref{dis1}}{=} 
\intt_{\substack{\pmb{\xi}\in\Xi(\TT_1)\\\pmb{\xi}_r=\xi}}
\langle X(\pmb{\xi}), \phi\rangle \notag \\
& \!\!\!\! 
 \stackrel{\eqref{dis2}}{=} 
\intt_{\substack{\pmb{\xi}\in\Xi(\TT_1)\\\pmb{\xi}_r=\xi}}
\int
X(\pmb{\xi}, t) \phi(t) dt\\
& = \int \N_0^{(2)}(v)(\xi, t)  \phi(t) dt, 
\end{align*}

%\begin{align*}
%\jb{\N_0^{(2)}(v)(\xi),  \phi}
%&  = \bigg\langle \intt_{\substack{\pmb{\xi}\in\Xi(\TT_1)\\\pmb{\xi}_r=\xi}}
%\ind_{C_0}
%\frac{e^{-i\mu_1 t}}{\mu_1}\prod_{a\in\TT_1^\infty}v_{\xi_a}(t), \phi\bigg\rangle \notag\\
%& \!\!\!\! 
%\stackrel{\eqref{dis1}}{=} 
%\intt_{\substack{\pmb{\xi}\in\Xi(\TT_1)\\\pmb{\xi}_r=\xi}}
%\bigg\langle 
%\ind_{C_0}
%\frac{e^{-i\mu_1 t}}{\mu_1}\prod_{a\in\TT_1^\infty}v_{\xi_a}, \phi\bigg\rangle \notag \\
%& \!\!\!\! 
% \stackrel{\eqref{dis2}}{=} 
%\intt_{\substack{\pmb{\xi}\in\Xi(\TT_1)\\\pmb{\xi}_r=\xi}}
%\int
%\ind_{C_0}
%\frac{e^{-i\mu_1 t}}{\mu_1}\prod_{a\in\TT_1^\infty}v_{\xi_a}(t) \phi(t) dt\\
%& = \int \N_0^{(2)}(v)(\xi, t)  \phi(t) dt, 
%\end{align*}

\noi
where the last equality follows from Lemma \ref{LEM:2g0}
and Fubini's theorem, 
since the right-hand side  is absolutely integrable for $v \in C(\R; L^2(\R))$ and $\phi \in \mathcal{D}_t$.
This in particular show that 
\[\intt_{\substack{\pmb{\xi}\in\Xi(\TT_1)\\\pmb{\xi}_r=\xi}}
X(\pmb{\xi})\]

\noi
is well defined as an integral of temporal distributions in the sense described above.
Therefore, from \eqref{dis3}, we conclude that,  for almost every $\xi \in \R$,   
\[
\dt \bigg[
\intt_{\substack{\pmb{\xi}\in\Xi(\TT_1)\\\pmb{\xi}_r=\xi}}
\ind_{C_0}
\frac{e^{-i\mu_1 t}}{\mu_1}\prod_{a\in\TT_1^\infty}v_{\xi_a}\bigg]
= \intt_{\substack{\pmb{\xi}\in\Xi(\TT_1)\\\pmb{\xi}_r=\xi}}
\ind_{C_0}\dt \bigg(
\frac{e^{-i\mu_1 t}}{\mu_1}\prod_{a\in\TT_1^\infty}v_{\xi_a}\bigg).\]

\noi
in the (temporal) distributional sense.
A similar argument can be used to justify  the exchange of the time differentiation and the integration 
in the $J$th step of the normal form reductions in this mild sense,
 provided that $v\in C(\R; L^2(\R))$.
We, however, point out that 
the justification of (i) and (iii) %the application of the product rule requires
requires a higher regularity of $s \geq \frac 16$,
which is sufficient for switching  time derivatives
and integrals in the usual sense.

\end{remark}

\subsection{Unconditional well-posedness for the mKdV}
\label{SUBSEC:mKdV}

In this subsection, we discuss
the proof of Theorem~\ref{THM:mKdV}.
As in Subsection \ref{SUBSEC:NLS}, our goal is to justify (i), (ii), and (iii)
in the normal form reductions for the mKdV \eqref{mKdV}.
While the structure of the argument 
follows closely  that of the proof of  
Theorem~\ref{THM:NLS},
we need to reformulate the problem 
in order to handle the derivative in the nonlinearity.

Given  a solution $u $ to \eqref{mKdV}, 
 let $v$ be the corresponding interaction representation defined by \eqref{interaction}.
It follows from \eqref{mKdV2} and \eqref{KdV2} that 
 $\ft v(\xi)$ satisfies
\begin{align*}
 \dt \ft v(\xi, t) = - i \xi e^{i t\xi^3} \F(u^3)(\xi, t)
 \end{align*}

\noi
for each $\xi \in \R$.
Arguing as in Subsection \ref{SUBSEC:NLS}, 
we see that 
$\ft v(\xi)$ is a $C^1$-function in $t$
 for each {\it fixed} $\xi \in \R$, 
 provided that 
$u \in   C(\R; H^\frac{1}{6}(\R))\subset C(\R; L^3(\R))$.
This justifies (i)
the application of the product rule in the normal form reductions, 
provided that $s \geq \frac 16$.

Next, we discuss the issues (ii) and (iii).
For this purpose, we first need to review the normal form reductions
in Section \ref{SEC:3}.
By writing out the first step \eqref{2g}
of the normal form reductions for the mKdV, we have
\begin{align}
\N^{(1)}_2(v)(\xi,t)
&=\intt_{\substack{\pmb{\xi}\in\Xi(\TT_1)\\\pmb{\xi}_r=\xi}}\ind_{C_0} \xi e^{i\mu_1 t}
\prod_{a\in\TT_1^\infty}v_{\xi_a}\notag \\
&=\partial_t\Bigg[\intt_{\substack{\pmb{\xi}\in\Xi(\TT_1)\\\pmb{\xi}_r=\xi}}
\ind_{C_0} \xi^{(1)} 
\frac{e^{i\mu_1 t}}{\mu_1}\prod_{a\in\TT_1^\infty}v_{\xi_a}\Bigg] \notag\\
& \hphantom{XXXXX}
-\sum_{\TT_2\in\TF(2)}
\intt_{\substack{\pmb{\xi}\in\Xi(\TT_2)\\\pmb{\xi}_r=\xi}} 
\ind_{C_0}
\bigg(\prod_{j = 1}^2 \xi^{(j)}\bigg)
\frac{e^{i(\mu_1+\mu_2)t}}{\mu_1}\prod_{a\in\TT_2^\infty}v_{\xi_a}\notag \\
& = \dt \N_{0}^{(2)}(v)(\xi,t)
+ \N^{(2)}(v)(\xi,t).
\label{m2}
\end{align}

\noi
The main issue here is the derivative loss in the last generation.
More precisely, 
 an analogue of the $\F L^\infty$-estimate \eqref{NZ0a} on $\dt v$
does not hold for the mKdV, even if we use the $H^\frac{1}{4}$-norm on the right-hand side.
We instead have the following lemma.

\begin{lemma}\label{LEM:Extra}
The following estimate holds: 
\begin{equation}
%\sup_{\xi}
\bigg|\intt_{\xi=\xi_1+\xi_2+\xi_3}|\xi|^\frac{1}{4}v_1(\xi_1)v_2(\xi_2)v_3(\xi_3)d\xi_1d\xi_2\bigg|
\les \prod_{j = 1}^3 \|v_j\|_{H^\frac{1}{4}}
\label{Extra}
\end{equation}

\noi
for any $\xi \in \R$.
\end{lemma}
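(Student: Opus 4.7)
The plan is to reduce \eqref{Extra} to a product of two standard one-dimensional tools: Plancherel and the Sobolev embedding $H^{1/4}(\R) \hookrightarrow L^4(\R)$. The key observation is that on the constraint set $\xi = \xi_1+\xi_2+\xi_3$ we have $|\xi|\leq |\xi_1|+|\xi_2|+|\xi_3|$, and since $t\mapsto t^{1/4}$ is subadditive on $[0,\infty)$, this yields
\[
|\xi|^{\frac{1}{4}}\leq |\xi_1|^{\frac{1}{4}}+|\xi_2|^{\frac{1}{4}}+|\xi_3|^{\frac{1}{4}}
\leq \jb{\xi_1}^{\frac{1}{4}}+\jb{\xi_2}^{\frac{1}{4}}+\jb{\xi_3}^{\frac{1}{4}}.
\]
Hence it suffices to estimate, for each $j\in\{1,2,3\}$, the quantity
\[
I_j := \intt_{\xi=\xi_1+\xi_2+\xi_3}\jb{\xi_j}^{\frac{1}{4}}|v_1(\xi_1)||v_2(\xi_2)||v_3(\xi_3)|\, d\xi_1 d\xi_2,
\]
and by the symmetric structure of the desired bound, I only need to treat $I_1$.

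Next, I would perform the $\xi_2$-integration with $\xi_3=\xi-\xi_1-\xi_2$ to exhibit $I_1$ as a single convolution:
\[
I_1 = \int \jb{\xi_1}^{\frac{1}{4}}|v_1(\xi_1)|\cdot (|v_2|*|v_3|)(\xi-\xi_1)\, d\xi_1.
\]
Applying the Cauchy--Schwarz inequality in $\xi_1$ and using that $\|\jb{\cdot}^{1/4}|v_1|\|_{L^2}=\|v_1\|_{H^{1/4}}$ together with the translation-invariance of the $L^2$-norm (so the value of $\xi$ is irrelevant to the remaining norm), one obtains
\[
I_1 \leq \|v_1\|_{H^{\frac{1}{4}}}\cdot \big\| |v_2|*|v_3|\big\|_{L^2}.
\]

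The remaining convolution is handled by transferring to the physical side. Setting $w_j := \F^{-1}|v_j|$ and applying Plancherel (up to the usual $2\pi$ constants) gives $\||v_2|*|v_3|\|_{L^2}=\|w_2 w_3\|_{L^2}$, which by H\"older is bounded by $\|w_2\|_{L^4}\|w_3\|_{L^4}$. Finally, the one-dimensional Sobolev embedding $H^{\frac{1}{4}}(\R)\hookrightarrow L^4(\R)$ yields
\[
\|w_j\|_{L^4}\les \|w_j\|_{H^{\frac{1}{4}}}=\big\|\jb{\xi}^{\frac{1}{4}}|v_j(\xi)|\big\|_{L^2_\xi}=\|v_j\|_{H^{\frac{1}{4}}},
\]
so that $I_1\les\prod_{j=1}^3\|v_j\|_{H^{1/4}}$; treating $I_2,I_3$ in the same way and summing completes the proof.

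There is no real obstruction in this argument; the only point requiring care is bookkeeping of Fourier conventions, since the $v_j$ appear as Fourier-side objects (per the convention of Remark \ref{REM:tdepend}) while the Sobolev embedding is applied to the physical-side functions $\F^{-1}|v_j|$. The use of absolute values inside $\F^{-1}$ is legitimate because $H^{1/4}$ is a Fourier lattice (the weight $\jb{\xi}^{1/4}|v_j(\xi)|$ has the same $L^2$-norm as $\jb{\xi}^{1/4}v_j(\xi)$), so the estimate $\|\F^{-1}|v_j|\|_{H^{1/4}}=\|v_j\|_{H^{1/4}}$ holds without loss.
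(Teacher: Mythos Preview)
Your proof is correct and follows essentially the same route as the paper's: distribute the $|\xi|^{1/4}$ weight onto one of the three factors, place that factor in $L^2$ (absorbing the weight into the $H^{1/4}$-norm), and control the remaining two via the Sobolev embedding $H^{1/4}(\R)\hookrightarrow L^4(\R)$. The only cosmetic differences are that you invoke the subadditivity of $t\mapsto t^{1/4}$ where the paper uses the pigeonhole $|\xi_j|\gtrsim|\xi|$ for some $j$, and you apply Cauchy--Schwarz directly on the Fourier side whereas the paper passes to physical space via Hausdorff--Young before using H\"older; the substance is identical.
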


\begin{proof}
Without loss of generality, 
assume that $|\xi_1|\ges|\xi|$
and set  $w_1(\xi_1)=\jb{\xi_1}^\frac{1}{4}v_1(\xi_1)$.
Then,  by Hausdorff-Young's inequality followed by Sobolev's inequality, we have
\begin{align*}
\text{LHS of } \eqref{Extra}
&\les \big\| \F^{-1}(|w_1|*|v_2|*|v_3|) \big\|_{L^1_x}\\
&\leq \| \F^{-1}(| w_1|)\|_{L^2}\| \F^{-1}(|v_2|)\|_{L^4}\| \F^{-1}(|v_3|)\|_{L^4}\\
&\leq \prod_{j = 1}^3 \|v_j\|_{H^\frac{1}{4}}.
\end{align*}

\noi
This proves Lemma \ref{LEM:Extra}.
\end{proof}

\begin{remark}\rm
Let $\s_0 \geq 0$.
Then, proceeding as in the proof of Lemma \ref{LEM:Extra}, 
we have
\begin{equation*}
\sup_{\xi}\bigg|\intt_{\xi=\xi_1+\xi_2+\xi_3}|\xi|^{\s_0}v_1(\xi_1)v_2(\xi_2)v_3(\xi_3)d\xi_1d\xi_2\bigg|
\les \prod_{j = 1}^3 \|v_j\|_{H^\s}
\end{equation*}

\noi
for $\s \geq \max(\s_0, \frac 14)$.
Note that the regularity restriction on $\s$ is sharp by considering the case
$|\xi_1| \sim |\xi| \gg |\xi_2|, |\xi_3|$ and its permutations.
In particular, when $\s = \frac{1}{4}$, 
we can absorb precisely  $\frac{1}{4}$-power of $|\xi|$ in this trilinear estimate.
\end{remark}

On the one hand, Lemma \ref{LEM:Extra} shows that 
we can absorb $\frac 14$-derivative in the second generation.
On the other hand, we still need to handle the remaining $\frac 34$-derivative.
We can resolve this  issue
by reformulating the normal form reductions
as follows.
By the construction, 
we have  $\xi^{(2)}  = \xi^{(1)}_k $ for some $k \in \{1, 2, 3\}$.
See \eqref{tree3} and Definitions \ref{DEF:tree2} and~\ref{DEF:tree3}.
Hence, we can rewrite \eqref{m2} as 
\begin{align*}
\N^{(1)}_2(v)(\xi,t)
&=|\xi|^\frac{3}{4} \intt_{\substack{\pmb{\xi}\in\Xi(\TT_1)\\\pmb{\xi}_r=\xi}}\ind_{C_0}  \text{sgn}(\xi)|\xi|^\frac{1}{4}e^{i\mu_1 t}
\prod_{a\in\TT_1^\infty}v_{\xi_a}\notag \\
&=|\xi|^\frac{3}{4} \cdot \partial_t\Bigg[\intt_{\substack{\pmb{\xi}\in\Xi(\TT_1)\\\pmb{\xi}_r=\xi}}
\ind_{C_0} \text{sgn}(\xi)|\xi|^\frac{1}{4}
\frac{e^{i\mu_1 t}}{\mu_1}\prod_{a\in\TT_1^\infty}v_{\xi_a}\Bigg] \notag\\
& \hphantom{X}
- |\xi|^\frac{3}{4} \sum_{\TT_1\in\TF(1)}
\sum_{p^{(1)} \in \TT_1^\infty}
\intt_{\substack{\pmb{\xi}\in\Xi(\TT_1)\\\pmb{\xi}_r=\xi}} 
\ind_{C_0}\text{sgn}(\xi)|\xi|^\frac{1}{4}
|\xi_{p^{(1)}}|^\frac{3}{4} \\
& \hphantom{XXXXXXXXXXXXXX}
\times 
\frac{e^{i\mu_1t}}{\mu_1}
\M(v)(\xi_{p^{(1)}})
\prod_{a\in\TT_1^\infty \setminus \{p^{(1)}\}}v_{\xi_a}\notag \\
& = \partial_t \N_{0}^{(2)}(v)(\xi,t)
+ \N^{(2)}(v)(\xi,t), 
\end{align*}

\noindent
where $\text{sgn}(\xi)=\pm1$  denotes the sign\footnote{When $\xi = 0$, 
we have $\N^{(1)}_2(v)(\xi,t) = 0$ and hence we may assume $\xi \ne 0$.}
of $\xi$ and $\M(v) = \M(v, v, v)$ is defined by 
\begin{align}
\M(v_1, v_2, v_3) (\xi, t) 
:=  - i\intt_{\xi = \xi_1+\xi_2+\xi_3}  \text{sgn}(\xi)|\xi|^\frac{1}{4}e^{i\Psi(\bar{\xi})t}v(\xi_1)v(\xi_2)v(\xi_3)d\xi_1 d\xi_2.
\label{m3a}
\end{align}

In particular, we have {\it shifted 
$\frac 34$-derivative up by one generation}
so that there is only $\frac 14$-derivative in the second generation,
for which Lemma \ref{LEM:Extra} is applicable.
Similarly, with \eqref{tree3} and Remark \ref{REM:order}, 
we can express $\N^{(J+1)}$  appearing in the $J$th step
as
\begin{align}
& \N^{(J+1)}  (v)(\xi) \notag \\
&\hphantom{X}
 = \sum_{\TT_{J+1}\in\TF(J+1)}
\intt_{\substack{C_0\, \cap\, \bigcap_{j=1}^{J-1}C_j^c\\
 \pmb{\xi}\in\Xi(\TT_{J+1})\\ \pmb{\xi}_r=\xi}}
\bigg(\prod_{j = 1}^{J+1} \xi^{(j)}\bigg)
\frac{e^{ i\wt{\mu}_{J+1}t}}{\prod_{j=1}^{J}\wt{\mu}_j}\prod_{a\in\TT_{J+1}^\infty}v_{\xi_a}\notag \\
&\hphantom{X}
=  |\xi|^\frac{3}{4} 
\sum_{\TT_{J}\in\TF(J)}
\sum_{p^{(J)} \in\TT_{J}^\infty}
\intt_{\substack{C_0\,\cap\,\bigcap_{j=1}^{J-1}C_j^c\\
\pmb{\xi}\in\Xi(\TT_{J})\\ \pmb{\xi}_r=\xi}} 
\text{sgn}(\xi)|\xi|^\frac{1}{4} \notag\\
&\hphantom{XXXX} \times \bigg(\prod_{j \in \#P(r^{(1)}, p^{(J)})
 \setminus \{1\}} |\xi_{p^{(j-1)}}|^\frac{3}{4}\cdot \text{sgn}(\xi_{p^{(j-1)}})|\xi_{p^{(j-1)}}|^\frac{1}{4}\bigg)
\notag\\
&\hphantom{XXXX}
\times
\bigg( \prod_{j \notin \#P(r^{(1)}, p^{(J)})} \xi_{r^{(j)}}\bigg) |\xi_{p^{(J)}}|^\frac{3}{4}
\frac{e^{ i\wt{\mu}_Jt}}{\prod_{j=1}^{J}\wt{\mu}_j}
\M(v)(\xi_{p^{(J)}})
\prod_{a\in\TT_{J}^\infty\setminus \{p^{(J)}\} }v_{\xi_a}.
\label{m4}
\end{align}

\noi
Here, $P(r^{(1)}, p^{(J)})$ is the shortest path
from $r^{(1)}$ to $p^{(J)}$ 
defined in Remark \ref{REM:order}
and $\#P(r^{(1)}, p^{(J)})$ is defined by 
\begin{align}
\#P(r^{(1)}, p^{(J)}) = \big\{ j \in \{1, \dots, J\}: r^{(j)} \in P(r^{(1)}, p^{(J)})\big\}.
\label{m5}
\end{align}

\noi
Note that $1 \in \#P(r^{(1)}, p^{(J)})$.

In view of \eqref{jg}, 
we see that 
$\N^{(J+1)}_1$
and $\N^{(J+1)}_2$
are given by 
\eqref{m4} 
after modifying the frequency restriction
to 
$C_0\cap\bigcap_{j=1}^{J-1}C_j^c\cap C_{J}$
and 
$C_0\cap\bigcap_{j=1}^{J}C_j^c$,
respectively.
Then, we have the following $\xi$-dependent estimates, 
replacing the $\F L^\infty$-estimates in Lemma \ref{LEM:NLS5} 
for the cubic NLS.

\begin{lemma}\label{LEM:mKdV1}
Let $s > \frac 14$.
Then, we have
\begin{align}
|\N^{(J+1)}(v)(\xi)|
& \les |\xi|^\frac{3}{4} N^{-\frac{J}{3}+\frac{J-1}{3}\dl+}\|v\|^{2J}_{H^s} \|v\|_{H^\frac{1}{4}}^3, 
\label{mj1}\\
|\N_2^{(J+1)}(v)(\xi)|
& \les |\xi|^\frac{3}{4} N^{-\frac{J}{3}+\frac{J-1}{3}\dl+}\|v\|^{2J}_{H^s} \|v\|_{H^\frac{1}{4}}^3, 
\label{mj2}
\end{align}

\noindent
for $0< \dl < 1$ and $\xi \in \R$.
\end{lemma}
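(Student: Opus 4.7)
\textbf{Proof Plan for Lemma \ref{LEM:mKdV1}.}

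The plan is to follow the structure of the proofs of Lemmas~\ref{LEM:jg0} and~\ref{LEM:jg} closely, but with three modifications adapted to both the pointwise-in-$\xi$ nature of the estimate and the derivative loss carried by the mKdV nonlinearity. First, I use the representation \eqref{m4}, in which $\frac34$ of the derivative $\xi^{(j+1)}$ has been shifted up one generation along the path $P(r^{(1)}, p^{(J)})$ described in Remark~\ref{REM:order}. The benefit is that the innermost $(J{+}1)$th generation now retains only a $|\xi_{p^{(J)}}|^{1/4}$ derivative weight through the operator $\M(v)(\xi_{p^{(J)}})$ defined in \eqref{m3a}, which is exactly what Lemma~\ref{LEM:Extra} can absorb. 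Applying that lemma pointwise yields the bound $|\M(v)(\xi_{p^{(J)}})| \lesssim \|v\|_{H^{1/4}}^3$ uniformly in $\xi_{p^{(J)}}$, which immediately produces the factor $\|v\|_{H^{1/4}}^3$ in the statement and reduces the problem to estimating a $J$-generation iterated integral whose integrand carries the usual product $\prod_{j=1}^{J} \xi_{r^{(j)}}$ together with the extra weight $|\xi_{p^{(J)}}|^{3/4}$ placed on the terminal node $p^{(J)}$ at the unique generation $k \in \{1,\dots,J\}$ for which $p^{(J)} \in \pi_k^\infty(\TT_J)$.

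The second and crucial step is to establish a pointwise-in-$\xi$ analogue of Lemma~\ref{LEM:KdV2}, adapted to carry the additional child weight $|\xi_i|^{3/4}$ arising from the shift. I would prove this by revisiting the case-by-case Cauchy-Schwarz analysis of Lemma~\ref{LEM:KdV1}, inserting the extra $|\xi_i|^{3/4}$ into the multiplier $m(\bar{\xi})$ of \eqref{K3c}, and using the strict inequality $s > \frac14$ to trade part of the $\langle \xi_i\rangle^{s}$ regularity for $\langle \xi_i \rangle^{3/4}$. In the modulation-restricted region $|\Psi(\bar{\xi})-\al|\sim M$, the computation analogous to \eqref{K3a} gives a modulation gain of $M^{-\frac13+}$ per application (rather than the $M^{-\frac12+}$ of Lemma~\ref{LEM:KdV2}), reflecting the loss of $\frac16$ of a modulation power consumed by absorbing the $|\xi_i|^{3/4}$ weight via the modulation constraints on $\Psi = 3(\xi_1{+}\xi_2)(\xi_2{+}\xi_3)(\xi_3{+}\xi_1)$. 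The non-endpoint regularity is used in precisely the same delicate configuration as Case~2 of the proof of Lemma~\ref{LEM:KdV1}, where the endpoint argument does not afford the needed slack.

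I would then iterate this refined pointwise trilinear estimate across the $J$ generations, in the ordering dictated by Remark~\ref{REM:order} (outermost $\mu_1$ first, then proceeding along $\wt{\mu}_2, \dots, \wt{\mu}_J$), collecting a factor of $N_j^{-\frac13+}$ at each dyadic level $|\wt{\mu}_j|\sim N_j$ subject to $N_j \geq (2j+1)^3 \wt{M}_{j-1}^{1-\delta}$. Summing over the dyadic pieces exactly as in \eqref{jg03}---with the tree combinatorial factor $c_J = |\TF(J)|$ absorbed by the factorials $(2j{+}1)^{\frac32-}$ appearing in the denominator from the successive thresholds---produces the bound $|\xi|^{3/4}\,N^{-\frac{J}{3} + \frac{J-1}{3}\delta +} \|v\|_{H^s}^{2J}\|v\|_{H^{1/4}}^3$ claimed in \eqref{mj1}. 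The bound \eqref{mj2} on $\N_2^{(J+1)}$ follows by an identical argument, since the extra cutoff to $C_J^c$ only tightens the domain of integration without affecting any of the estimates (compare Lemma~\ref{LEM:NLS5}).

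\emph{Main obstacle.} The principal technical hurdle is establishing the pointwise trilinear building block of the second step with both the correct modulation exponent $M^{-\frac13+}$ and the extra child weight $|\xi_i|^{3/4}$ simultaneously. This will require a careful re-examination of each subcase of the proof of Lemma~\ref{LEM:KdV1} after insertion of the weight---in particular the configurations where $|\xi|$ is comparable to or much larger than $N^{1/3}$, for which the typical bound $|\xi|\lesssim|\al|+M$ from \eqref{K10a} is not sharp enough by itself---and it is precisely here that the regularity threshold must be forced up from $s\geq \frac14$ (adequate for the $H^s$-valued estimates of Section~\ref{SEC:3}) to $s > \frac14$ in the statement of Lemma~\ref{LEM:mKdV1}.
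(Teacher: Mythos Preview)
Your plan follows the paper's route closely: use the derivative-shifted representation \eqref{m4}, bound the innermost factor $\M(v)$ in $\F L^\infty$ via Lemma~\ref{LEM:Extra}, prove a weak-norm localized modulation estimate carrying the extra child weight $|\xi_i|^{3/4}$ (the paper's Lemmas~\ref{LEM:mk1} and~\ref{LEM:mk2}), and iterate as in Lemma~\ref{LEM:jg0}. Two points deserve correction.

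First, the trilinear building block does \emph{not} give $M^{-\frac13+}$ with merely logarithmic $\alpha$-dependence. In the configuration $|\xi_1|\sim|\xi_2|\sim|\xi|\gg|\xi_3|$ with $|\xi_{12}|,|\xi_{23}|,|\xi_{31}|\sim|\xi|$ one has $|\Psi|\sim|\xi|^3$, and after the Cauchy--Schwarz step only $|\xi|^{1/2}$ of the weight is absorbed, leaving $|\xi|^{1/4}\lesssim\max\{|\alpha|,M\}^{1/12}$; this is Subcase~6.c of Lemma~\ref{LEM:mk1} and is unavoidable. The actual bound is $\max\{|\alpha|,M\}^{1/12}M^{-1/2}$, and a claimed $\jb{\alpha}^{0+}M^{-1/3+}$ would fail whenever $|\alpha|\gg M^2$. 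The exponent $-\tfrac13$ per generation emerges only \emph{after} the dyadic summation, via the crude bound $\max(N_{j-1}^{1/12},N_j^{1/12})\le N_{j-1}^{1/12}N_j^{1/12}$ telescoped across $j$ (this is precisely the paper's footnote computation). Second, only the generations lying on the path $P(r^{(1)},p^{(J)})$ carry the modified weight $|\xi|^{1/4}|\xi_i|^{3/4}$ and need the new $\F L^\infty$ estimate; off-path generations retain the full factor $\xi_{r^{(j)}}$ and are handled in $H^s$ by Lemma~\ref{LEM:KdV2} with the better gain $N_j^{-1/2+}$. Finally, the restriction $s>\tfrac14$ is genuinely needed in several subcases of Lemma~\ref{LEM:mk1} (3.c, 4.c, and most of Case~5), not only in the analogue of Case~2 of Lemma~\ref{LEM:KdV1}.
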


We present the proof of this lemma in the next subsection.
On the one hand, the estimates in 
Lemma \ref{LEM:mKdV1} 
depend on $\xi \in \R$.
On the other hand, we only need to justify 
the normal form reductions for each {\it fixed} $\xi \in \R$.
Hence, this $\xi$-dependence does not cause any trouble.
In fact, once we have Lemma \ref{LEM:mKdV1},  
we can proceed as in Subsection \ref{SUBSEC:NLS}
and justify
\begin{itemize}
\item[(ii)] switching time derivatives and integrals in spatial frequencies
and 
\item[(iii)]
the remainder term  
 $ \N_{2}^{(J+1)}(v)(\xi)$  tends to $0$ as $ J\to \infty $
 (for each fixed $\xi \in \R$)
\end{itemize}

\noi
in the normal form reductions.

\subsection{Proof of Lemma \ref{LEM:mKdV1}}
\label{SUBSEC:4.3}

We conclude this paper by presenting the proof of 
Lemma \ref{LEM:mKdV1}. 
We first need to introduce new trilinear operators.
For $j \in \{1, 2, 3\}$, $M\geq 1$, and $\al \in \R$, 
define trilinear operators $ \N^\al _{j, \leq M}$ 
and $ \I^\al _{j,  M}$ by 
\begin{align}
 \N^\al _{j, \leq M}(v_1,v_2,v_3)(\xi,t)
&:= \intt_{\substack{\xi = \xi_1+\xi_2+\xi_3\\|\Psi(\bar{\xi})-\al |\leq M}}
|\xi|^\frac{1}{4} |\xi_j|^\frac{3}{4} e^{i\Psi(\bar{\xi}) t}
v_1(\xi_1)v_2(\xi_2)v_3(\xi_3)d\xi_1d\xi_2,  \notag \\
 \I^\al _{j,  M}(v_1,v_2,v_3)(\xi,t)
& := \intt_{\substack{\xi = \xi_1+\xi_2+\xi_3\\|\Psi(\bar{\xi})-\al |\sim M }} 
|\xi|^\frac{1}{4} |\xi_j|^\frac{3}{4} \frac{  e^{i\Psi(\bar{\xi}) t}}{\Psi(\bar{\xi})-\al }v_1(\xi_1)v_2(\xi_2)v_3(\xi_3)d\xi_1d\xi_2.
\label{mk0}
\end{align}

\noi
As in Section \ref{SEC:2}, 
we also define
$ \N^\al _{j,  M}$, 
 $ \N^\al _{j, > M}$, 
and $ \I^\al _{j, > M}$
in an analogous manner.

\begin{lemma}[Localized modulation estimates for the mKdV in the weak norm]
\label{LEM:mk1}
Let $s > \frac 14$.
Then, we have
\begin{align}
\|\N^{\al} _{j, \leq M}(v_1,v_2,v_3)\|_{\F L^\infty}
& \lesssim \max\{|\alpha|,M\}^\frac{1}{12} M^{\frac{1}{2}}
\|v_j\|_{\F L^{\infty}}
  \prod_{\substack{k = 1\\k\ne j}}^3
\|v_k\|_{H^s}
\label{mk1}
\end{align}

\noindent
for any $j \in \{1, 2, 3\}$, $M\geq 1$, and $\al  \in \R$,
where the implicit constant is independent of 
$j \in \{1, 2, 3\}$.
\end{lemma}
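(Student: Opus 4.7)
The plan is to mirror the proof of Lemma~\ref{LEM:KdV1}, with two modifications dictated by the weaker topology on $v_j$. By the symmetry of the definition~\eqref{mk0}, it suffices to treat $j=1$. In place of $L^2$ duality one uses $\F L^1$--$\F L^\infty$ duality: for nonnegative $v_1,\ldots,v_4$, the estimate~\eqref{mk1} follows once we show
\begin{align*}
\Bigg|\intt_{\xi=\xi_1+\xi_2+\xi_3}\ind_{|\Psi(\bar{\xi})-\al|\leq M}|\xi|^{\frac14}|\xi_1|^{\frac34}\prod_{i=1}^{3}v_i(\xi_i)\,v_4(\xi)\,d\xi_1d\xi_2d\xi\Bigg|
\les \max\{|\al|,M\}^{\frac{1}{12}}M^{\frac12}\|v_1\|_{L^\infty}\|v_2\|_{H^s}\|v_3\|_{H^s}\|v_4\|_{L^1}.
\end{align*}
First I would pull $\|v_4\|_{L^1}$ out via $\sup_\xi$ and $\|v_1\|_{L^\infty}$ out via $\sup_{\xi_1}$; then, after redistributing $\jb{\xi_2}^{s}\jb{\xi_3}^{s}$ onto $v_2,v_3$ and applying Cauchy--Schwarz in $(\xi_1,\xi_3)$ (a change of variables yields $\|\jb{\xi_2}^s v_2(\xi_2)\jb{\xi_3}^s v_3(\xi_3)\|_{L^2_{\xi_1,\xi_3}}=\|v_2\|_{H^s}\|v_3\|_{H^s}$), the whole matter reduces to the multiplier bound
\begin{align*}
\sup_{\xi}\bigg(\intt_{\xi=\xi_1+\xi_2+\xi_3}\ind_{|\Psi(\bar{\xi})-\al|\leq M}\frac{|\xi|^{1/2}|\xi_1|^{3/2}}{\jb{\xi_2}^{2s}\jb{\xi_3}^{2s}}\,d\xi_1\,d\xi_3\bigg)^{1/2}\les \max\{|\al|,M\}^{\frac{1}{12}}M^{\frac12}.
\end{align*}

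The heart of the argument is a case analysis on the sizes of $|\xi|,|\xi_1|,|\xi_2|,|\xi_3|$ and of the pairwise sums $|\xi_{ij}|$, keyed to the factorization $\Psi(\bar{\xi})=3(\xi_1+\xi_2)(\xi_2+\xi_3)(\xi_3+\xi_1)$, exactly as in the proof of Lemma~\ref{LEM:KdV1}. In each case two mechanisms combine: (a)~for some $i\in\{1,2,3\}$ one has $|\partial_{\xi_i}\Psi|$ large (e.g.\ $\partial_{\xi_1}\Psi=-3\xi_{12}(\xi_1-\xi_2)$), which localizes one integration variable to a set of length $\les M/|\partial_{\xi_i}\Psi|$ and supplies the factor~$M^{1/2}$; and (b)~in the balanced regime $|\xi_{12}|\sim|\xi_{23}|\sim|\xi_{31}|\sim|\xi|$, the modulation constraint $|\Psi|\les\max\{|\al|,M\}$ forces $|\xi|\les\max\{|\al|,M\}^{1/3}$, so that $|\xi|^{1/4}\les\max\{|\al|,M\}^{1/12}$ after the square root. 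The remaining weight $|\xi_1|^{3/2}/(\jb{\xi_2}^{2s}\jb{\xi_3}^{2s})$ is absorbed using the strict inequality $s>\tfrac14$, which furnishes the surplus $2s-\tfrac12>0$ needed to close the integration in the dominant frequency; the very low-frequency case $|\xi|\les 1$ is dispatched separately via Hausdorff--Young and H\"older, as in Case~1 of Lemma~\ref{LEM:KdV1}.

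The hard part will be the unbalanced regimes, where the factorization of $\Psi$ is lopsided---for instance $|\xi_{23}|\ll|\xi|$ while $|\xi_{12}|,|\xi_{31}|\sim|\xi|$, so that $|\Psi|\sim|\xi|^2|\xi_{23}|$ rather than $|\xi|^3$ and the modulation constraint only gives $|\xi|\les(\max\{|\al|,M\}/|\xi_{23}|)^{1/2}$. As in Subcases~3.a--b and~4.a--c of Lemma~\ref{LEM:KdV1}, the remedy is to choose the order of Cauchy--Schwarz (in $\xi_1$, $\xi_2$, or $\xi_3$) adapted to the subcase---applying it in the variable whose $|\partial_{\xi_i}\Psi|$ is largest---and to dyadically decompose the small pairwise sum. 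The strict inequality $s>\tfrac14$ is essential precisely to absorb the logarithmic or slightly super-polynomial losses from those dyadic sums, which is why, in contrast to Lemma~\ref{LEM:KdV1}, the endpoint $s=\tfrac14$ is not included in the statement.
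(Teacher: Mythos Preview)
Your plan is essentially the paper's own approach: reduce via duality and Cauchy--Schwarz to the multiplier bound~\eqref{mk4} (the paper's $m_{s,j}$ is exactly your $|\xi|^{1/4}|\xi_1|^{3/4}/(\jb{\xi_2}^s\jb{\xi_3}^s)$), then run a case analysis on the sizes of $|\xi|,|\xi_j|,|\xi_{ij}|$, using (a)~largeness of $|\partial_{\xi_i}\Psi|$ to localize one integration and (b)~the cubic bound $|\xi|^3\les|\Psi|\les\max\{|\alpha|,M\}$ in the fully balanced regime to produce the $\max\{|\alpha|,M\}^{1/12}$ factor.

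The one organizational difference is that you reduce to $j=1$ by the permutation symmetry of the mKdV trilinear form upfront, whereas the paper first fixes an ordering $|\xi_{12}|\ge|\xi_{23}|\ge|\xi_{31}|$ and then treats $j=1,2,3$ separately within Case~5; these are equivalent bookkeeping choices. One caution: the low-frequency regime is a bit more involved than your sketch suggests---the paper needs three cases ($|\xi|,|\xi_j|\les1$; $|\xi|\gg1,|\xi_j|\les1$; $|\xi|\les1,|\xi_j|\gg1$), and in the last of these the strict inequality $s>\tfrac14$ is already required (Subcase~3.c), not only in the unbalanced high-frequency regimes you flag. Likewise, the full case analysis in the paper's Cases~5 and~6 runs to roughly a dozen subcases, several of which (e.g.\ Subcase~6.f with its five subsubcases) require more care than simply picking the variable with largest $|\partial_{\xi_i}\Psi|$; but none of this lies outside the mechanisms you have identified.
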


We postpone the proof of Lemma \ref{LEM:mk1} to the end of this section.
As an immediate  corollary to Lemma \ref{LEM:mk1}, we have the following
estimates on $\I^\al_{j, M}$ and $\I^\al_{j,> M}$.
\begin{lemma}\label{LEM:mk2}
Let $s > \frac 14$.
Then, we have
\begin{align*}
  \|\I^\al _{j,  M}(v_1, v_2, v_3)\|_{\F L^{ \infty}}& \lesssim \max\{|\alpha|,M\}^\frac{1}{12} M^{-\frac{1}{2}}
\|v_j\|_{\F L^{\infty}}
  \prod_{\substack{k = 1\\k\ne j}}^3
\|v_k\|_{H^s}, \\
 \|\I^\al _{j, > M}(v_1, v_2, v_3)\|_{\F L^{\infty}}& \lesssim \max\{|\alpha|,M\}^\frac{1}{12} M^{-\frac{1}{2}}
\|v_j\|_{\F L^{\infty}}
  \prod_{\substack{k = 1\\k\ne j}}^3
\|v_k\|_{H^s}, 
\end{align*}

\noi
for any $j \in \{1, 2, 3\}$, $M\geq 1$, and $\al  \in \R$, 
where the implicit constant is independent of 
$j \in \{1, 2, 3\}$.
\end{lemma}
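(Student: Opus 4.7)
The plan is to derive Lemma \ref{LEM:mk2} from Lemma \ref{LEM:mk1} by mimicking the argument used to obtain Lemma \ref{LEM:NLS2} from Lemma \ref{LEM:NLS1}, now in the weak $\F L^\infty$ topology. The crucial point to recall is that the proof of Lemma \ref{LEM:mk1} never exploits the oscillatory phase $e^{i\Psi(\bar\xi)t}$; it simply bounds $|e^{i\Psi(\bar\xi)t}| = 1$ and estimates the resulting integral against the multiplier $|\xi|^{1/4}|\xi_j|^{3/4}\ind_{|\Psi(\bar\xi)-\al|\leq M}$. Consequently the same bound holds for any integrand obtained from $\N^\al_{j,\leq M}$ by replacing $e^{i\Psi(\bar\xi)t}$ with an $L^\infty_{\bar\xi}$ function.

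For the estimate on $\I^\al_{j,M}$, I would localize to the dyadic shell $|\Psi(\bar\xi)-\al|\sim M$, where the factor $1/(\Psi(\bar\xi)-\al)$ is pointwise bounded by $M^{-1}$. Pulling this pointwise bound out and then applying Lemma \ref{LEM:mk1} (with, e.g., the threshold $2M$ so that $\{|\Psi(\bar\xi)-\al|\sim M\}\subset\{|\Psi(\bar\xi)-\al|\leq 2M\}$) yields
\[
\|\I^\al_{j,M}(v_1,v_2,v_3)\|_{\F L^\infty}
\lesssim M^{-1}\cdot \max\{|\al|,M\}^{1/12} M^{1/2}\, \|v_j\|_{\F L^\infty}\!\!\prod_{k\neq j}\|v_k\|_{H^s},
\]
which is precisely the first claimed bound.

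For the estimate on $\I^\al_{j,>M}$, I would decompose dyadically, writing
\[
\I^\al_{j,>M}(v_1,v_2,v_3)=\sum_{\substack{N\geq M\\ \text{dyadic}}}\I^\al_{j,N}(v_1,v_2,v_3),
\]
applying the first part of the lemma to each piece, and then summing. The only mild subtlety is the presence of $\max\{|\al|,N\}^{1/12}$ under the sum. If $|\al|\leq M$, then $\max\{|\al|,N\}=N$ throughout the sum and the series $\sum_{N\geq M} N^{1/12-1/2}$ converges to $\lesssim M^{-5/12}=\max\{|\al|,M\}^{1/12}M^{-1/2}$. If $|\al|>M$, I would split the sum at $N\sim|\al|$: for $M\leq N\leq|\al|$ the prefactor is $|\al|^{1/12}$ and the geometric sum in $N^{-1/2}$ gives $\lesssim|\al|^{1/12}M^{-1/2}$, while for $N>|\al|$ the remaining tail $\sum N^{1/12-1/2}\lesssim|\al|^{-5/12}\leq|\al|^{1/12}M^{-1/2}$. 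In either case we recover $\max\{|\al|,M\}^{1/12}M^{-1/2}$, as desired.

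Since no step involves anything more delicate than a geometric series and the invocation of Lemma \ref{LEM:mk1}, I do not anticipate a genuine obstacle here; the only bookkeeping issue is tracking the $\max\{|\al|,N\}^{1/12}$ factor through the dyadic summation, which the above case analysis handles in a uniform way. The implicit constants are clearly independent of $j\in\{1,2,3\}$ since Lemma \ref{LEM:mk1} provides $j$-uniform bounds, so the corresponding uniformity statement is automatic.
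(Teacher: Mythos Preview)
Your proposal is correct and matches the paper's approach: the paper states Lemma~\ref{LEM:mk2} as ``an immediate corollary to Lemma~\ref{LEM:mk1}'' without further proof, in direct analogy with how Lemma~\ref{LEM:NLS2} follows from Lemma~\ref{LEM:NLS1} (and Lemma~\ref{LEM:NLS4} from Lemma~\ref{LEM:NLS3}). Your case analysis for tracking the $\max\{|\al|,N\}^{1/12}$ factor through the dyadic sum is the only wrinkle beyond the NLS setting, and you have handled it correctly.
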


Now, we are ready to prove Lemma \ref{LEM:mKdV1}
(and hence Theorem \ref{THM:mKdV}), assuming Lemma \ref{LEM:mk1}.
Given $\TT \in \TF(J)$, we first define $\wt \SF_0(\TT, v)$
by making the following modifications
in  Steps (ii), (iii),  and (iv) 
of the definition of $\SF_0(\TT; v)$ in Definition \ref{DEF:S}:

\medskip

%\noi
\begin{itemize}
\item[]
\hspace{-8.5mm}
(ii) and (iii):
Let $j = 2, \dots, J$.
Recall the definitions of  $\#P(r^{(1)}, p^{(J)})$ 
and the order $\#p^{(j)}$  of $p^{(j)}$
from  \eqref{m5}
and  Definition \ref{DEF:tree3}.

\end{itemize}

\medskip

\begin{itemize}
\item[$\bullet$]
 If $j \in \#P(r^{(1)}, p^{(J)})$, then
we make the following change:

\medskip
 $\I^{\wt \mu_{j-1}}_{|\mu_j + \wt \mu_{j-1}| > (2j+1)^3 M_{j-1}^{1-\dl}  } $ 
 \quad $\LRA$ \qquad 
 $\I^{\wt \mu_{j-1}}_{\#p^{(k)}, |\mu_j + \wt \mu_{j-1}| > (2j+1)^3 M_{j-1}^{1-\dl}  } $,\\ 

\noi	
where $ \I^\al _{j,  M}$ is as in \eqref{mk0} 
and $p^{(k)}$ is the unique node such that
$p^{(k)} \in \pi_j(\TT)^\infty \cap P(r^{(1)}, p^{(J)})$.

\medskip

\item[$\bullet$] If $j \notin \#P(r^{(1)}, p^{(J)})$, 
we do not make any modification.

\medskip

\item[(iv):]

$\I_{|\mu_1 | > N  }$
 \quad $\LRA$ \qquad 
 $|\xi|^\frac{3}{4} \I_{\#p^{(k)}, |\mu_1 | > N  }$,

\medskip

\noi
where $p^{(k)}$ is the unique node such that
$p^{(k)} \in \pi_1(\TT)^\infty\cap P(r^{(1)}, p^{(J)})$.

\end{itemize}

\medskip

Given $\TT\in\TF(J)$, 
we label its terminal nodes by $a_1, \dots, a_{2J+1}$.
Then, it follows from the definition above for $\wt \SF_0(\TT; \, \cdot\, )$
and \eqref{m4}  with 
\eqref{X1} and  Remark \ref{REM:multi}
 that 
\begin{align}
\N^{(J+1)}(v) 
& = 
\sum_{\TT\in\TF(J)}
\sum_{k = 1}^{2J+1}
\wt \SF_0\bigg( \TT;  {\bf v}_k\bigg), 
\label{md3}
\end{align}

\noi
where 
${\bf v}_k = ( v, \dots,  v, \underbrace{\M(v)}_{k\text{th spot}},  v, \dots,  v)$.
Compare this with \eqref{jg'2}.

Lemma \ref{LEM:Extra} with \eqref{m3a}
yields
\begin{align}
\|\M(v)\|_{\F L^\infty} \les \| v\|_{H^\frac{1}{4}}^3.
\label{md4}
\end{align}

\noi
Then, 
a slight modification\footnote{In particular, 
in \eqref{jg03}, 
we replace
$N_1^{-\frac{1}{2}+}
\prod_{j = 2}^{J}N_j^{-\frac12+} $
by 
\[N_1^{-\frac{1}{2}+\frac{1}{12}+}
\prod_{j = 2}^{J}\big( N_j^{-\frac12+} 
\max (N_{j-1}^{\frac{1}{12}}, N_{j}^{\frac{1}{12}})\big)
\leq \prod_{j = 1}^{J} N_j^{-\frac13+} .\]
}
 of  the proof of Lemma \ref{LEM:jg0}
with
Lemmas \ref{LEM:KdV2} and \ref{LEM:mk2}
and \eqref{md4} yields
the first estimate \eqref{mj1} in 
Lemma \ref{LEM:mKdV1}.
The only difference between 
$\N^{(J+1)}(v)$ and 
$\N^{(J+1)}_2(v)$ 
is the modulation restriction $C_J^c$ in the last generation.
In particular, 
$\N^{(J+1)}_2(v)$ 
can be expressed as \eqref{md3}
with an extra  modulation restriction on $\M(v)$.
Since the proof of Lemma \ref{LEM:Extra} remains true
 with such a modulation restriction, 
the second estimate \eqref{mj2} in Lemma \ref{LEM:mKdV1} follows
in an analogous manner.
This completes the proof of Lemma \ref{LEM:mKdV1}
and hence the proof of Theorem \ref{THM:mKdV},
assuming 
Lemma \ref{LEM:mk1}.

\medskip

It remains to prove Lemma \ref{LEM:mk1}.
The remaining part of this paper is devoted to the proof of Lemma \ref{LEM:mk1}.

\begin{proof}[Proof of Lemma \ref{LEM:mk1}] 
For convenience, let $A_j = \{ 1, 2, 3\} \setminus \{j\}$.
Then, by duality, \eqref{mk1} follows once we prove 
\begin{align}
\Bigg|
\intt_{\xi = \xi_1 + \xi_2 + \xi_3} 
\ind_{|\Psi(\bar{\xi})-\al |\leq M}
& \cdot |\xi|^\frac{1}{4}|\xi_j|^\frac{3}{4}  \prod_{k = 1}^3 v_k(\xi_k) v_4(\xi) 
d\xi_1d\xi_2d\xi\Bigg| 
\notag\\
& \lesssim 
\max\{|\alpha|,M\}^\frac{1}{12} M^\frac{1}{2}, 
\|v_j\|_{\F L^{\infty}}
\bigg(  \prod_{k \in A_j}
\|v_k\|_{H^s}\bigg)
\|v_4\|_{\F L^1}
\label{mk2}
\end{align}

\noindent
for all non-negative functions $v_1, \dots, v_4$
and $j \in \{ 1, 2, 3\}$.
Given $s \geq \frac 14$ and $j \in \{1, 2, 3\}$, define $m_{s,j}(\bar \xi)$ by
\begin{align}
m_{s,j}(\bar \xi)=\frac{|\xi|^\frac{1}{4}|\xi_j|^{\frac{3}{4}}}{\prod_{k\in A_j} \jb{\xi_k}^{s}}.
\label{MK}
\end{align}

\noi
When $s = \frac 14$, we simply denote $m_{\frac{1}{4},j}$ by $m_j$.
By a variant of the Cauchy-Schwarz argument,  we have 
\begin{align}
\text{LHS of }\eqref{mk2}
& \leq \bigg\|
\intt_{\xi = \xi_1 + \xi_2 + \xi_3} 
\ind_{|\Psi(\bar{\xi})-\al |\leq M}\cdot |\xi|^\frac{1}{4}|\xi_j|^\frac{3}{4}
\prod_{k = 1}^3 v_k(\xi_k)\bigg\|_{L^\infty_{\xi}}\|v_4 \|_{L^1_\xi}\notag\\
& \leq 
\sup_{\xi}
\bigg(\intt_{\xi = \xi_1 + \xi_2 + \xi_3} 
\ind_{|\Psi(\bar{\xi})-\al |\leq M} \cdot  m^2_{s,j}(\bar\xi)d\xi_1d\xi_2\bigg)^\frac{1}{2} \notag\\
& \hphantom{XXXXXXXXX}
\times
\|v_j\|_{\F L^{\infty}}
  \prod_{k \in A_j}
\|v_k\|_{H^s} \|v_4\|_{L^1_\xi}.
\label{mk3}
\end{align}

\noindent
Hence, it suffices to show that 
\begin{align}
\sup_{\xi}
\bigg(\intt_{\xi = \xi_1 + \xi_2 + \xi_3} 
\ind_{|\Psi(\bar{\xi})-\al |\leq M}  \cdot m_{s,j}^2(\bar\xi)d\xi_1d\xi_2
 \bigg)^\frac{1}{2}
\les \max\{|\alpha|,M\}^\frac{1}{12} M^\frac{1}{2}, 
\label{mk4}
\end{align}
\noindent
uniformly in $j \in \{1, 2, 3\}$ for $s>\frac{1}{4}$.

While 
we do not explicitly state so, 
it is understood that 
all the estimates and statements in the following 
hold uniformly in $j \in \{1, 2, 3\}$.
Also, we will see that the estimate \eqref{mk1} in fact holds 
at the endpoint regularity $s=\frac{1}{4}$  for many of the following cases.
For those cases, by monotonicity of $\jb{\xi}^s$ in $s$,
it suffices to prove \eqref{mk1} for $s = \frac 14$.

\medskip
\noindent
$\bullet$ {\bf Case 1:}  $|\xi|\les 1$, $|\xi_j|\les 1$.
\\
\indent
In this case,  we prove \eqref{mk4} with $s = \frac 14$.
Without loss of generality, we assume  $j=1$.

\medskip
\noindent
{\bf Subcase 1.a:} $|\xi_{23}|$ and $|\xi_{2-3}|\gtrsim 1$.
\\
\indent
By viewing $\Psi$ as a function of $\xi_2$
for fixed  $\xi$ and $\xi_1$, 
we have
 $|\dd_{\xi_2}\Psi(\bar \xi)|\sim|\xi_{23}||\xi_{2-3}|\gtrsim 1$.
Then, with 
$|m_1(\bar \xi)| \les 1$, we have
\begin{align*}
\text{LHS of }\eqref{mk4}
& \les 
\sup_{\xi}
\bigg(\int_{|\xi_1| \les 1} 
\ind_{|\Psi(\bar{\xi})-\al |\leq M} d\xi_2 d\xi_1 \bigg)^\frac{1}{2}
\les M^{\frac{1}{2}}.
\end{align*}

\noindent
{\bf Subcase 1.b:} $|\xi_{23}|$ or $|\xi_{2-3}|\ll 1$.
\\
\indent
In this case, we have 
$\jb{\xi_2} \sim \jb{\xi_3}$.
When $|\xi_2|, |\xi_3|\les 1$, 
it is easy to  see that that the left-hand side of \eqref{mk4} is $O(1)$
with   $|m_1(\bar \xi)| \les 1$ and  integration in $\xi_2$ and $\xi_3$.
Now, suppose that  $|\xi_2|\sim|\xi_3|\gg 1$.
By viewing $\Psi$ as a function of $\xi_2$
for fixed  $\xi$ and $\xi_3$, 
we have
 $|\dd_{\xi_2}\Psi(\bar \xi)|\sim|\xi_{12}||\xi_{1-2}| \sim |\xi_2|^2\gg  1$
 since $|\xi_1| \les 1 \ll |\xi_2|$.
Then, with 
$|m_1(\bar \xi)| \les \jb{\xi_2}^{-\frac 12}$,  we have
\begin{align*}
\text{LHS of }\eqref{mk4}
& \les 
\sup_{\xi}
\bigg( \sum_{\substack{ N\gg1  \\\text{dyadic}}} \frac 1N\intt_{|\xi_2|\sim|\xi_3|\sim N}
\ind_{|\Psi(\bar{\xi})-\al |\leq M} d\xi_2 d\xi_3 \bigg)^\frac{1}{2}\\
& \les 
\bigg( \sum_{\substack{ N\gg1  \\\text{dyadic}}} \frac{M}{N^3} N \bigg)^\frac{1}{2}
\les M^{\frac{1}{2}}.
\end{align*}

\medskip
\noindent
$\bullet$ {\bf Case 2:}  $|\xi|\gg 1$, $|\xi_j|\les 1$.
\\
\indent
In this case,  we prove \eqref{mk4} with $s = \frac 14$.
We denote $A_j = \{1, 2, 3\} \setminus \{j\} = \{k_1, k_2\}$.

\medskip
\noindent
{\bf Subcase 2.a:} $|\xi_{k_1}|\sim|\xi_{k_2}|\ges|\xi|\gg 1\ges |\xi_j|$, where $k_1$, $k_2\in A_j$.
\\
\indent
By viewing $\Psi$ as a function of $\xi_{k_1}$
for fixed  $\xi$ and $\xi_{k_2}$, 
we have
$|\dd_{\xi_{k_1}}\Psi(\bar \xi)|\sim |\xi_{j k_1}||\xi_{j - k_1}|
%\sim |\xi_{k_1}|^2 
\sim |\xi_{k_2}|^2$.
Then, with $|m_j(\bar\xi)|\les |\xi_{k_2}|^{-\frac{1}{4}}$,  we have
	\begin{align*}
\text{LHS of }\eqref{mk4}
& \les 
\sup_{\xi}
\bigg(\int_{|\xi_{k_1}|\sim |\xi_{k_2}|\gg  1} 
\ind_{|\Psi(\bar{\xi})-\al |\leq M} \cdot\frac{1}{|\xi_{k_2}|^\frac{1}{2}}
d\xi_{k_1} d\xi_{k_2} \bigg)^\frac{1}{2}
\les M^\frac{1}{2}.
	\end{align*}

\noindent
{\bf Subcase 2.b:} $|\xi_{k_1}|\sim|\xi|\gg \max( |\xi_{k_2}|, |\xi_j|, 1)$, where $k_1$, $k_2\in A_j$.

In this case, $|\xi_{k_1k_2}|\sim|\xi_{k_1}|$ and $|\xi_{k_1-k_2}|\sim|\xi_{k_1}|$.
By viewing $\Psi$ as a function of $\xi_{k_1}$
for fixed  $\xi$ and $\xi_{j}$, 
we have
$|\dd_{\xi_{k_1}}\Psi(\bar \xi)|\sim |\xi_{k_1k_2}||\xi_{ k_1-k_2}|\sim |\xi|^2$.
Then, with $|m_j(\bar \xi)|\les1$, we have
\begin{align*}
\text{LHS of }\eqref{mk4}
& \les 
\sup_{\xi}
\bigg(\int_{|\xi_j| \les 1} 
\ind_{|\Psi(\bar{\xi})-\al |\leq M} d\xi_{k_1} d\xi_{j} \bigg)^\frac{1}{2}
\les M^\frac{1}{2}.
\end{align*}

\medskip
\noindent
$\bullet$ {\bf Case 3:}  $|\xi|\les 1$, $|\xi_j|\gg 1$.
\\
\indent
We prove \eqref{mk4} with $s = \frac 14$ in Subcases 3.a and 3.b,
while Subcase 3.c requires $s > \frac 14$.
In Subcases 3.a and 3.b, we only need the condition
$|\xi|\ll |\xi_j|$ and their relative sizes with respect to $1$ is not important.

\medskip
\noindent
{\bf Subcase 3.a:} $|\xi_{k_1}|\sim|\xi_{k_2}|\gg|\xi_j|\gg|\xi|$, where $k_1$, $k_2\in A_j$.
\\
\indent
Let  $s = \frac 14$.
We have $|\xi_{jk_2}|\sim|\xi_{j-k_2}|\sim|\xi_{k_1}|$.
By viewing $\Psi$ as a function of $\xi_{k_2}$
for fixed  $\xi$ and $\xi_{k_1}$, 
we have
 $|\dd_{\xi_{k_2}}\Psi(\bar \xi)|\sim |\xi_{jk_2}||\xi_{j-k_2}|\sim|\xi_{k_1}|^2$.
Then, we have
	\begin{align*}
\text{LHS of }\eqref{mk4}
& \les 
\sup_{\xi}
\bigg(\int_{|\xi_j| \les 1} 
\ind_{|\Psi(\bar{\xi})-\al |\leq M} \cdot\frac{|\xi|^\frac{1}{2}|\xi_{j}|^\frac{3}{2}}{|\xi_{k_1}|}d\xi_{k_2} d\xi_{k_1} \bigg)^\frac{1}{2}\\
&\les M^\frac{1}{2}\sup_{\xi}
\bigg(\int_{|\xi_{k_1}|\gg |\xi|} 
\frac{|\xi|^\frac{1}{2}}{|\xi_{k_1}|^\frac{3}{2}} d\xi_{k_1} \bigg)^\frac{1}{2}\lesssim M^\frac{1}{2}.
\end{align*}

\noindent
{\bf Subcase 3.b:} $|\xi_{1}|\sim|\xi_{2}|\sim |\xi_3|\gg|\xi|$.
\\
\indent
Let  $s = \frac 14$.
We have $m_j(\bar \xi)\lesssim |\xi|^\frac{1}{4} |\xi_1|^\frac{1}{4}$ and $|\xi_{12}|\sim|\xi_{23}|\sim|\xi_{31}|\sim|\xi_1|$.
We claim that  $\max\{|\xi_{1-2}|,|\xi_{2-3}|,|\xi_{3-1}|\}\ges |\xi_1|$.
Otherwise, i.e.~if $\max\{|\xi_{1-2}|,|\xi_{2-3}|,|\xi_{3-1}|\}\ll|\xi_1|$, 
then $\xi_1$, $\xi_2$, and $\xi_3$ must have the same sign and thus $|\xi|=|\xi_1+\xi_2+\xi_3|\sim|\xi_1|$, 
leading to a contradiction.
Without loss of generality, we  assume  $|\xi_{2-3}|\sim|\xi_1|$.
By viewing $\Psi$ as a function of $\xi_{3}$
for fixed  $\xi$ and $\xi_{1}$, 
we have
 $|\dd_{\xi_{3}}\Psi(\bar \xi)|\sim |\xi_{23}||\xi_{2-3}|\sim|\xi_{1}|^2$.
Hence, we have
\begin{align*}
\text{LHS of } \eqref{mk4} 
& \les \sup_{\xi}
\bigg(\intt_{\xi = \xi_1 + \xi_2 + \xi_3} 
\ind_{|\Psi(\bar{\xi})-\al |\leq M}\cdot
|\xi|^\frac{1}{2}|\xi_1|^\frac{1}{2}d\xi_3 d\xi_1 \bigg)^\frac{1}{2} 
\\
& \les M^\frac{1}{2}
\sup_{\xi} 
\bigg(\int_{|\xi_1|\gg|\xi|}\frac{|\xi|^\frac{1}{2}}{|\xi_1|^\frac{3}{2}}d\xi_1 \bigg)^\frac{1}{2} 
\les  M^{\frac{1}{2}}.
	\end{align*}

\noindent
{\bf Subcase 3.c:} $|\xi_{k_1}|\sim|\xi_j|\gg |\xi_{k_2}|$, $|\xi|$, where $k_1$, $k_2\in A_j$.
\\
\indent
In this case, we have $|\xi_{k_1k_2}|\sim|\xi_{k_1-k_2}|\sim|\xi_j|$.
By viewing $\Psi$ as a function of $\xi_{k_1}$
for fixed  $\xi$ and $\xi_{j}$, 
we have
 $|\dd_{\xi_{k_1}}\Psi(\bar \xi)|\sim |\xi_{k_1k_2}||\xi_{k_1 - k_2}|\sim|\xi_{j}|^2$.
Hence, with  $|m_{j,s}(\bar \xi)|\lesssim|\xi|^\frac{1}{4}|\xi_j|^{\frac{3}{4}-s}$, we have
	\begin{align*}
\text{LHS of }\eqref{mk4}
& \les 
\sup_{|\xi|\les 1}
\bigg(\int_{|\xi_j| \gg 1} 
\ind_{|\Psi(\bar{\xi})-\al |\leq M} |\xi|^\frac{1}{2}|\xi_j|^{\frac{3}{2}-2s} d\xi_{k_1} d\xi_{j} \bigg)^\frac{1}{2}\\
& \les 
M^\frac{1}{2}
\sup_{|\xi|\les 1}
\bigg(\int_{|\xi_j| \gg 1} 
\frac{1}{|\xi_j|^{\frac{1}{2}+2s}}  d\xi_{j} \bigg)^\frac{1}{2}
\les M^\frac{1}{2}, 
	\end{align*}
	
\noi
provided that  $s>\frac14$.

\medskip
In the remaining part of the proof, we split 
the case $|\xi|$, $|\xi_j|\gg 1$ into three subcases, 
depending on the sizes of 
$|\xi_{12}|, |\xi_{23}|,$ and $|\xi_{31}|$.
Without loss of generality, we assume 
$|\xi_{12}| \geq |\xi_{23}| \geq |\xi_{31}|$.
Then, by the triangle inequality, we have
\begin{align}
 |\xi_{12}|\ges |\xi|.
 \label{Q1}
\end{align}

\medskip
\noindent
$\bullet$ {\bf Case 4:}  $|\xi|, |\xi_j|\gg 1$ and $|\xi_{31}|\leq |\xi_{23}| \leq 1$. 
\\
\indent
 Arguing as in Case 2 of the proof of Lemma \ref{LEM:KdV1}, 
 we have
\begin{align}
|\xi_1|\sim|\xi_2|\sim|\xi_3|\sim|\xi|\gg1
\label{mk8}
\end{align}

\noindent
 in this case.
 In particular, we have
\begin{align}
m_{j}(\bar \xi) \sim |\xi|^\frac{1}{2}
\sim m(\bar \xi)
\label{mk9}
\end{align}

\noindent
for any $j \in \{1, 2, 3\}$,
where $m(\bar \xi)$ is as in \eqref{K3c}.
Let 
$\zeta_1 = \xi_{23}$, $\zeta_2 = \xi_{31}$, and $\zeta_3 = \xi_{12}$ as before.

We prove \eqref{mk2} with $s = \frac 14$ in Subcases 4.a and 4.b,
while Subcase 4.c requires $s > \frac 14$.

\smallskip

\noindent
{\bf Subcase 4.a:}
$|\xi| \les M$.
\\
\indent
Let  $s = \frac 14$.
By H\"older's inequality with $|\zeta_2| \leq |\zeta_1| \leq 1$, 
we have 
\begin{align*}
\text{LHS of }\eqref{mk2}
& \les 
\|v_4\|_{L^1_\xi}
\sum_{\substack{1\ll N \les M \\\text{dyadic}}}
N^\frac{1}{2}
\sup_{|\xi|\sim N}
\intt_{|\zeta_2| \leq |\zeta_1|\leq 1}
 \prod_{k \in A_j}\jb{\xi_k}^\frac{1}{4} \notag \\
&  \hphantom{XXXXXXXX}
\times v_1(\xi - \zeta_1) v_2 (\xi - \zeta_2) v_3 ( \zeta_1 + \zeta_2 - \xi) d \zeta_1 d\zeta_2 
\notag\\
& \les M^{\frac{1}{2}}
\|v_j\|_{\F L^{\infty}}
\bigg(  \prod_{k \in A_j}
\|v_k\|_{H^\frac{1}{4}}\bigg)
\|v_4\|_{\F L^1}.
\end{align*}

In the following, we assume that $|\xi| \gg M$.

\noindent
{\bf Subcase 4.b:}
$|\zeta_2 |\leq |\zeta_1|\les \frac{M^\frac{1}{2}}{|\xi|^\frac{1}{2}}$.
\\
\indent
Let  $s = \frac 14$.
By H\"older's inequality with \eqref{mk8} and \eqref{mk9}, 
we have 
\begin{align*}
\text{LHS of }\eqref{mk2}
& \les 
\|v_4\|_{L^1_\xi}
\sum_{\substack{N\gg M\\\text{dyadic}}}
N^\frac{1}{2}
\sup_{|\xi|\sim N}
\intt_{|\zeta_2| \leq |\zeta_1|\les \frac{M^\frac{1}{2}}{N^\frac{1}{2}}}
 \prod_{k \in A_j}\jb{\xi_k}^\frac{1}{4} \notag \\
&  \hphantom{XXXXXXXX}
 \times v_1(\xi - \zeta_1) v_2 (\xi - \zeta_2) v_3 ( \zeta_1 + \zeta_2 - \xi) d \zeta_1 d\zeta_2 
\notag\\
& \les M^\frac{1}{2}\|v_j \|_{L^\infty_\xi} \|v_4\|_{L^1_\xi}
\sum_{\substack{N\gg M\\\text{dyadic}}}
\prod_{k \in A_j} \|\P_N v_k\|_{H^\frac{1}{4}} \notag\\
& \les M^{\frac{1}{2}}
\|v_j\|_{\F L^{\infty}}
\bigg(  \prod_{k \in A_j}
\|v_k\|_{H^\frac{1}{4}}\bigg)
\|v_4\|_{\F L^1},
\end{align*}

\noindent
where we used Cauchy-Schwarz inequality (in $N$) in the last step.

\smallskip

\noindent
{\bf Subcase 4.c:} $ \frac{M^\frac{1}{2}}{|\xi|^\frac{1}{2}}
\ll |\zeta_1|$. 

In this case,  we use $m_{s,j}(\bar \xi)\sim|\xi|^{1-2s}$.
By viewing $\Psi$ as a function of $\xi_{3}$
for fixed  $\xi$ and $\zeta_1$, 
we have
 $|\dd_{\xi_{3}}\Psi(\bar \xi)|\sim |\xi_{23}||\xi_{2 - 3}|\sim|\zeta_1||\xi|$, 
since  $|\xi_{2-3}| = |2\xi_3 - \zeta_{1}|\sim |\xi|$.
Hence, we have
\begin{align*}
\text{LHS of } \eqref{mk4} 
& \les \sup_{\xi}
\bigg(
\intt_{ \frac{M^\frac{1}{2}}{|\xi|^\frac{1}{2}} \ll |\zeta_1|\leq 1}  
\ind_{|\Psi(\bar{\xi})-\al |\leq M}
\cdot  |\xi|^{2-4s}
d\xi_3 d\zeta_1 \bigg)^\frac{1}{2} 
\\
& \les M^\frac{1}{2}\sup_{\xi} |\xi|^{\frac{1}{2} - 2s}
\bigg(
\intt_{ \frac{M^\frac{1}{2}}{|\xi|^\frac{1}{2}} \ll |\zeta_1|\leq 1}  
 \frac{1}{|\zeta_1|}
 d\zeta_1 \bigg)^\frac{1}{2} \les M^\frac{1}{2},
\end{align*}

\noi
provided that  $s>\frac{1}{4}$.

\begin{remark}\rm
When  $|\zeta_1| \geq |\zeta_2| \gg \frac{M^\frac{1}{2}}{|\xi|^\frac{1}{2}}$, 
it follows from \eqref{Psi} and \eqref{Q1} that 
\begin{align}
|\zeta_1| \les \frac{|\al|+M}{|\xi| |\zeta_2|}
\ll \frac{|\al|+M}{M}\cdot \frac{M^\frac{1}{2}}{|\xi|^\frac{1}{2}}
\label{mk10}
\end{align}

\noindent
under the condition $|\Psi(\bar{\xi})-\al |\leq M$.
In particular, \eqref{mk10} with  $|\zeta_1|  \gg \frac{M^\frac{1}{2}}{|\xi|^\frac{1}{2}}$
implies that $|\al | \gg M$.
Then, in view of 
\eqref{mk9}, 
the desired estimate \eqref{mk4} with $s = \frac 14$
follows from Subcase 2.b in the proof of Lemma \ref{LEM:KdV1}.

\end{remark}

\medskip

\noindent
$\bullet$ {\bf Case 5:}  $|\xi|, |\xi_j|\gg 1$ and $|\xi_{31}| \leq 1 < |\xi_{23}|\leq |\xi_{12}|$. % \leq |\xi_{2-3}|)\leq 1$.
\\
\indent
In this case, we have 
 \begin{align}
 \jb{\xi_1} \sim \jb{\xi_3}
 \qquad\text{and}
 \qquad
  \jb{\xi_2}\sim \jb{\xi}.
 \label{Q2}
  \end{align}
  
  \noi
First, we consider the case $j = 1$.
We have
\begin{align}
m_{s, 1}(\bar \xi) \sim \frac{|\xi|^\frac{1}{4}|\xi_1|^\frac{3}{4}}{\jb{\xi_2}^{s}\jb{\xi_3}^{s}}
\les |\xi|^{\frac{1}{4} - s}|\xi_1|^{\frac{3}{4}-s}.
\label{mk11}
\end{align}

\noi
In the following, we take $s > \frac 14$.

\smallskip

\noindent
{\bf Subcase 5.a:}
$|\xi|\ges |\xi_1| \gg 1$.
\\
\indent
Arguing as in Subcase 3.a in the proof of Lemma \ref{LEM:KdV1}, 
we see that 
$\zeta_2$ belongs to an interval $I = I(\zeta_1, \xi)$ of length
\begin{align} 
| I(\zeta_1, \xi)|\les \frac{ M}{|\zeta_1||\xi|}
\label{mk12}
\end{align}

\noindent
for each fixed $\xi$ and $\zeta_1$
and hence for each fixed $\xi$ and $\xi_1 = \xi - \zeta_1$.
Then, 
using a variant of~\eqref{mk3}
with \eqref{mk11}, \eqref{mk12}, 
and  $|\zeta_1| = |\xi - \xi_1| \les |\xi|$, 
we have
\begin{align}
\text{LHS of }\eqref{mk2}
& \leq  \bigg\|
\int_{1\leq |\zeta_1| \les |\xi| }
\int_{|\zeta_2|\leq 1}
\ind_{|\Psi(\bar{\xi})-\al |\leq M}\cdot m_{s, 1}(\bar \xi) 
 \prod_{k \in A_1} \jb{\xi_k}^s
\notag\\
& \hphantom{XXXXXXX}
\times v_1(\xi - \zeta_1) v_2(\xi - \zeta_2) v_3 ( -\xi + \zeta_1  + \zeta_2)
d\zeta_2 d\zeta_1
\bigg\|_{L^\infty_{\xi}}\|v_4 \|_{L^1_\xi}\notag\\
& \les  \sup_{\xi}
\bigg(
\int_{1\leq |\zeta_1| \les |\xi| }
\int_{\zeta_2 \in I(\zeta_1, \xi)}
\ind_{|\Psi(\bar{\xi})-\al |\leq M} \cdot 
|\xi|^{\frac{1}{2} - 2s}|\xi_1|^{\frac{3}{2}-2s}
 d\zeta_2 d\zeta_1 \bigg)^\frac{1}{2}
\notag\\
& \hphantom{XXXXXXX}
\times 
\|v_1\|_{L^\infty_\xi }\bigg(\prod_{k \in A_1} \| v_k\|_{H^s}\bigg)\|v_4\|_{L^1_\xi }\notag\\
& \les M^\frac{1}{2} \sup_{\xi}
\bigg(
\int_{1\leq |\zeta_1| \les |\xi| }
\frac{|\xi|^{1 - 4s}}{|\zeta_1| }
 d\zeta_1 \bigg)^\frac{1}{2}
\|v_1\|_{L^\infty_\xi }\bigg(\prod_{k \in A_1}\| v_k\|_{H^s}\bigg)\|v_4\|_{L^1_\xi }\notag\\
& \les M^{\frac{1}{2}}
\|v_1\|_{\F L^{\infty}}
\bigg(  \prod_{k \in A_1}
\|v_k\|_{H^s}\bigg)
\|v_4\|_{\F L^1},
\label{mk13}
\end{align}

\noi
provided that $\frac 14 < s \leq \frac 34$.
For $s > \frac 34$, 
we simply use
$|\xi_1|^{\frac{3}{2}-2s} \les 1$ and repeat the computation in \eqref{mk13}.

\medskip
\noindent
{\bf Subcase 5.b:} 
$|\xi_1| \gg |\xi|$. 

By viewing $\Psi$ as a function of $\xi_{2}$
for fixed  $\xi$ and $\xi_1$, 
we have
 $|\dd_{\xi_2}\Psi(\bar \xi)|\sim |\xi_{23}||\xi_{2 - 3}|\sim|\xi_1|^2$
 thanks to \eqref{Q2}.
 Hence, with \eqref{mk11}, we have
 \begin{align*}
\text{LHS of }\eqref{mk4}
& \les 
\sup_{\xi}
\bigg(\intt_{|\xi_1|\gg|\xi|} 
\ind_{|\Psi(\bar{\xi})-\al |\leq M} \cdot|\xi_1|^{\frac{3}{2}-2s} d\xi_{2} d\xi_{1} \bigg)^\frac{1}{2}\\
& \les
M^\frac{1}{2}\sup_{\xi}
\bigg(\intt_{|\xi_1|\gg 1 } 
 |\xi_1|^{-\frac{1}{2}-2s} d\xi_{1} \bigg)^\frac{1}{2}\les M^\frac{1}{2},
	\end{align*}

\noi
provided that  $s>\frac{1}{4}$.

\medskip
Next, we consider the case $j = 2$.
It follows from  \eqref{MK} and \eqref{Q2} that 
\begin{align*}
m_{s,2}(\bar \xi) \sim \frac{|\xi|^\frac{1}{4}|\xi_2|^\frac{3}{4}}{\jb{\xi_1}^{s}\jb{\xi_3}^{s}}
\sim \frac{|\xi|}{\jb{\xi_1}^{2s}}
\end{align*}

%\medskip

\noindent
{\bf Subcase 5.c:}
$|\xi_2| \les |\xi_1|$.
\\
\indent
%In this case, we have 
%$|m_{s, 2}(\bar \xi)| \les |\xi||\xi_1|^{ - 2s}$.
Proceeding as in Subcase 5.a
with $|\zeta_1| = |\xi - \xi_1| \les |\xi_1|$, 
 we obtain
\begin{align*}
\text{LHS of }\eqref{mk2}
& \leq  \bigg\|
\int_{1\leq |\zeta_1| \les |\xi_1| }
\int_{|\zeta_2|\leq 1}
\ind_{|\Psi(\bar{\xi})-\al |\leq M}\cdot m_{s, 2}(\bar \xi) 
 \prod_{k \in A_2} \jb{\xi_k}^s
\notag\\
& \hphantom{XXXXXXX}
\times v_1(\xi - \zeta_1) v_2(\xi - \zeta_2) v_3 ( -\xi + \zeta_1  + \zeta_2)
d\zeta_2 d\zeta_1
\bigg\|_{L^\infty_{\xi}}\|v_4 \|_{L^1_\xi}\notag\\
& \les  \sup_{\xi}
\bigg(
\int_{1\leq |\zeta_1| \les |\xi_1| }
\int_{\zeta_2 \in I(\zeta_1, \xi)}
\ind_{|\Psi(\bar{\xi})-\al |\leq M} \cdot 
|\xi|^2|\xi_1|^{-4s}
 d\zeta_2 d\zeta_1 \bigg)^\frac{1}{2}
\notag\\
& \hphantom{XXXXXXX}
\times 
\|v_2\|_{L^\infty_\xi }\bigg(\prod_{k \in A_2} \| v_k\|_{H^s}\bigg)\|v_4\|_{L^1_\xi }\notag\\
& \les M^\frac{1}{2} \sup_{\xi}
\bigg(
\int
\frac{1}{\jb{\zeta_1}\jb{\xi - \zeta_1}^{4s-1} }
 d\zeta_1 \bigg)^\frac{1}{2}
\|v_2\|_{L^\infty_\xi }\bigg(\prod_{k \in A_2}\| v_k\|_{H^s}\bigg)\|v_4\|_{L^1_\xi }\notag\\
& \les M^{\frac{1}{2}}
\|v_2\|_{\F L^{\infty}}
\bigg(  \prod_{k \in A_2}
\|v_k\|_{H^s}\bigg)
\|v_4\|_{\F L^1},
\end{align*}

\noi
provided that $s> \frac 14$.

\medskip

\noindent
{\bf Subcase 5.d:}
$|\xi_2| \gg |\xi_1|$.
\\
\indent
By viewing $\Psi$ as a function of $\xi_{1}$
for fixed  $\xi$ and $\xi_3$, 
we have
 $|\dd_{\xi_1}\Psi(\bar \xi)|\sim |\xi_{12}||\xi_{1-2 }|\sim|\xi|^2$.
Hence, we have
\begin{align*}
\text{LHS of }\eqref{mk4}
& \les 
\sup_{\xi}
\bigg(\int_{ |\xi_3|\ll |\xi|} 
\ind_{|\Psi(\bar{\xi})-\al |\leq M} \frac{|\xi|^2}{\jb{\xi_3}^{4s}}d\xi_1 d\xi_3 \bigg)^\frac{1}{2}\\
& \les 
 M^{\frac{1}{2}}
\sup_{\xi}
\bigg(\int
\frac{1}{\jb{\xi_3}^{4s}} d\xi_3 \bigg)^\frac{1}{2}
\les M^{\frac{1}{2}},
\end{align*}

\noi
provided that  $s>\frac{1}{4}$.

\medskip
Lastly, we consider the case $j = 3$.

\noindent
{\bf Subcase 5.e:}
 $|\xi_3| \les |\xi|$.
\\
\indent
If $|\xi_{23}|\ges |\xi|$, 
then
this case follows from Subcase 5.a
by switching $1 \leftrightarrow3$.
Now, 
suppose that 
$|\xi_{23}|\ll |\xi|$.
Then, it follows from \eqref{Q2} that 
 $\jb{\xi_1} \sim \jb{\xi_2}
\sim \jb{\xi_3} \sim \jb{\xi}$.
In particular, we have 
$m_{s, 3}(\bar \xi) \sim |\xi|^{1-2s}$.
Hence, we can apply 
 Subcase 5.a
by replacing  $m_1(\bar \xi)$ with $m_3(\bar \xi)$
(without switching $1\leftrightarrow3$).

\medskip

\noindent
{\bf Subcase 5.f:}
$|\xi_3| \gg |\xi|$.
\\
\indent
This case follows from Subcase 5.b 
by switching $1 \leftrightarrow3$.

\medskip

\noindent
$\bullet$ {\bf Case 6:}  $|\xi|, |\xi_j|\gg 1$ and $ |\xi_{12}|,  |\xi_{23}|,  |\xi_{31}| > 1$. % \leq |\xi_{2-3}|)\leq 1$.
\\
\indent
From \eqref{K10a} with $\xi = \xi_1 + \xi_2 + \xi_3$, we have
\begin{align}
|\al| + M \ges \max( |\xi|, |\xi_{1}|,  |\xi_{2}|,  |\xi_{3}|),
\label{Q3}
\end{align}

\noi
which allows us to prove \eqref{mk4} with $s = \frac{1}{4}$
in this case.
In the following, the size relation of 
$ |\xi_{12}|,  |\xi_{23}|,  |\xi_{31}|$ does not play any role.
Hence, without loss of generality, assume that 
$|\xi_1|\ge |\xi_2|\ge |\xi_3|$.
Recall that  $A_j = \{1, 2, 3\} \setminus \{j\} = \{k_1, k_2\}$.

\medskip

\noindent
{\bf Subcase 6.a:}
$|\xi_{1}| \sim |\xi| \gg |\xi_{2}|\ge  |\xi_{3}|$.
\\
\indent
Let  $s = \frac 14$.
By viewing $\Psi$ as a function of $\xi_{2}$
for fixed  $\xi$ and $\xi_3$, 
we have
 $|\dd_{\xi_2}\Psi(\bar \xi)|\sim |\xi_{12}||\xi_{1 -2}| \sim |\xi|^2$.
Hence, with \eqref{Q3}, we have
\begin{align*}
\text{LHS of } \eqref{mk4} 
& \les \sup_{\xi}
\bigg(\intt_{\xi = \xi_1 + \xi_2 + \xi_3} 
\ind_{|\Psi(\bar{\xi})-\al |\leq M}\cdot
\frac{|\xi|^\frac{1}{2}|\xi_{j}|^\frac{3}{2}}{\jb{\xi_{k_1}}^\frac{1}{2}\jb{\xi_{k_2}}^\frac{1}{2}}d\xi_{2} d\xi_{3} \bigg)^\frac{1}{2} 
\\
& \les \sup_{\xi}
\bigg(\intt_{\xi = \xi_1 + \xi_2 + \xi_3} 
\ind_{|\Psi(\bar{\xi})-\al |\leq M}\cdot
\frac{|\xi|^2}{\jb{\xi_{3}}}d\xi_{2} d\xi_{3} \bigg)^\frac{1}{2} 
\\
& \les M^\frac{1}{2}
\bigg(\int_{|\xi_{3}|\ll |\xi|} %t_{\xi = \xi_1 + \xi_2 + \xi_3} 
\frac{1}{\jb{\xi_{3}}}d\xi_{3} \bigg)^\frac{1}{2} 
\les M^\frac{1}{2}(\log |\xi|)^\frac{1}{2}
\les \jb{\al }^{0+} M^{\frac{1}{2}+}.
\end{align*}

\medskip

\noindent
{\bf Subcase 6.b:}
$|\xi_{1}| \sim |\xi_{2}|  \gg |\xi| \gg  |\xi_3|$.
\\
\indent
Let  $s = \frac 14$.
We have $|\xi_{12}|=|\xi-\xi_3|\sim |\xi|$
 and $|\xi_{1-2}|\sim|\xi_1|$,
 since $\xi_1$ and $\xi_2$ have opposite signs
 in this case.
By viewing $\Psi$ as a function of $\xi_{1}$
for fixed  $\xi$ and $\xi_3$, 
we have
 $|\dd_{\xi_1}\Psi(\bar \xi)|\sim |\xi_{12}||\xi_{1 -2}| \sim |\xi||\xi_1|$.
Then, noting that
\[ |m_{s,j}(\bar \xi)| \les \frac{|\xi|^\frac{1}{4}|\xi_1|^{\frac{1}{2}}}{\jb{\xi_3}^s},\]

\noi
it follows from \eqref{Q3} that 
\begin{align*}
\text{LHS of } \eqref{mk4} 
& \les \sup_{\xi}
\bigg(\sum_{\substack{|\xi| \ll N \les |\al| + M\\\text{dyadic}}}
\intt_{\substack{|\xi_1|\sim N\\|\xi_3|\ll |\xi|}}
\ind_{|\Psi(\bar{\xi})-\al |\leq M}\cdot
\frac{|\xi|^{\frac{1}{2}}N}{\jb{\xi_3}^{\frac 12}}d\xi_1 d\xi_3 \bigg)^\frac{1}{2} 
\\
& \les M^\frac{1}{2}
\sup_{\xi} 
\bigg(\sum_{\substack{|\xi| \ll N \les |\al| + M\\\text{dyadic}}}\int_{|\xi_3|\ll |\xi|}
\frac{1}{|\xi|^\frac{1}{2}\jb{\xi_3}^{\frac 12}}d\xi_3 \bigg)^\frac{1}{2} 
\les 
\jb{\al }^{0+} M^{\frac{1}{2}+}. 
\end{align*}

\medskip
\noindent
\textbf{Subcase 6.c:} $|\xi_1|\sim|\xi_2|\sim|\xi|\gg|\xi_3|$.

In this case, the desired estimate holds
for $s=\frac{1}{4}$ but 
it requires an extra factor of  $\max\{|\alpha|,M\}^\frac{1}{12}$.
We have 
$|\xi_{12}|=|\xi-\xi_3|\sim|\xi|$, $|\xi_{23}|\sim |\xi|$, and $|\xi_{13}|\sim|\xi|$.
Hence,  the condition $|\Psi(\bar \xi)|\leq |\alpha|+M$ with \eqref{Psi}
implies  $|\xi|^\frac{1}{4}\lesssim\max\{|\alpha|,M\}^\frac{1}{12}$.
In particular, we have 
\[|m_{j}(\bar \xi)| \les \frac{|\xi|^{\frac{3}{4}}}{\jb{\xi_3}^{\frac{1}{4}}}
\les \max\{|\alpha|,M\}^\frac{1}{12}|\xi|^\frac{1}{2}\]

\noi
for any $j \in \{1, 2, 3\}$.
By viewing $\Psi$ as a function of $\xi_{1}$
for fixed  $\xi$ and $\xi_2$, 
we have
 $|\dd_{\xi_1}\Psi(\bar \xi)|\sim |\xi_{13}||\xi_{1 -3}| \sim |\xi|^2$.
 Hence, we have
\noindent
\begin{align*}
\text{LHS of } \eqref{mk4} 
& \les \max\{|\alpha|,M\}^\frac{1}{12}\sup_{\xi}
\bigg(\intt_{\xi = \xi_1 + \xi_2 + \xi_3} 
\ind_{|\Psi(\bar{\xi})-\al |\leq M}\cdot
|\xi|d\xi_1 d\xi_2 \bigg)^\frac{1}{2} 
\\
& \les \max\{|\alpha|,M\}^\frac{1}{12}M^\frac{1}{2}
\sup_{\xi} 
\bigg(\int_{|\xi_2|\sim |\xi|}
\frac{1}{|\xi|}d\xi_2 \bigg)^\frac{1}{2} 
\les \max\{|\alpha|,M\}^\frac{1}{12}M^\frac{1}{2}.
\end{align*}

\medskip

In the following three subcases, we deal with the case $|\xi_1|, |\xi_2| , |\xi_3| \ges |\xi|  $.

\smallskip
\noindent
\textbf{Subcase 6.d:} $|\xi_1|\sim |\xi_2|\gg|\xi_3|\gtrsim |\xi|$.
\\
\indent
Let  $s = \frac 14$.
By viewing $\Psi$ as a function of $\xi_{2}$
for fixed  $\xi$ and $\xi_1$, 
we have
 $|\dd_{\xi_2}\Psi(\bar \xi)|\sim |\xi_{23}||\xi_{2 -3}| \sim |\xi_2|^2$.
Hence, with 
$m_{s,j}(\bar \xi)\les |\xi_1|^{\frac{1}{2}}$
and \eqref{Q3}, 
we have
\begin{align*}
\text{LHS of } \eqref{mk4} 
& \les \sup_{\xi}
\bigg(\sum_{\substack{|\xi|\ll N \les|\al|+M\\\text{dyadic}}}\int_{|\xi_1|\sim|\xi_2|\sim N}
\ind_{|\Psi(\bar{\xi})-\al |\leq M}\cdot
N d\xi_2 d\xi_1 \bigg)^\frac{1}{2} 
\\
& \les M^\frac{1}{2}
\sup_{\xi} 
\bigg(\sum_{\substack{|\xi|\ll N |\al| + M \\\text{dyadic}}}\int_{|\xi_1|\sim N}
\frac{1}{N}d\xi_1 \bigg)^\frac{1}{2} 
\les 
\jb{\al }^{0+} M^{\frac{1}{2}+}. 
\end{align*}

\medskip
\noindent
\textbf{Subcase 6.e:} $|\xi_1|\sim |\xi_2|\sim|\xi_3|\gg |\xi|$.
\\
\indent
This case (with $s = \frac 14$) follows from Subcase 3.b.

\medskip
\noindent
\textbf{Subcase 6.f} $|\xi_1|\sim |\xi_2|\sim|\xi_3|\sim |\xi|$
\\
\indent 
In this case, we have $m_{s, j}(\bar \xi)\sim|\xi|^{1 - 2s}$.
In the following, we do not use the size relation between $|\xi_1|$, $|\xi_2|$, and $|\xi_3|$.

\smallskip

\noi
$\circ$ \underline{Subsubcase 6.f.i:} 
 $\min\{|\zeta_1|,|\zeta_2|,|\zeta_3|\}\ll |\xi|$.
\\
\indent
Let $s = \frac 14$.
Without loss of generality, assume  $|\zeta_3|=|\xi_{12}|\ll|\xi|$. 
Then, we have
$|\xi_{1-2}| = |2\xi_1 - \xi_{12}| \sim|\xi|$.
By viewing $\Psi$ as a function of $\xi_2$
for fixed  $\xi$ and $\xi_3$ (or $\zeta_3=\xi-\xi_3$), 
we have
 $|\dd_{\xi_2}\Psi(\bar \xi)|\sim |\zeta_3||\xi_{1-2}| \sim |\zeta_3||\xi|$.
Hence, with \eqref{K10a}, we have
\begin{align*}
\text{LHS of } \eqref{mk4} 
& \les \sup_{\xi}
\bigg(\intt_{\xi = \xi_1 + \xi_2 + \xi_3} 
\ind_{|\Psi(\bar{\xi})-\al |\leq M}\cdot
|\xi|d\xi_2 d\zeta_3 \bigg)^\frac{1}{2} 
\\
& \les M^\frac{1}{2}
\sup_{\xi} 
\bigg(\int_{1\leq|\zeta_3|\les|\alpha|+M}\frac{1}{|\zeta_3|}d\zeta_3 \bigg)^\frac{1}{2} 
\les \jb{\alpha}^{0+} M^{\frac{1}{2}+}.
\end{align*}

\medskip

In the following, we consider the case:  $\min\{|\zeta_1|,|\zeta_2|,|\zeta_3|\}\sim |\xi|$.
By the triangle inequality, we have
\[
 \max(|\xi - \xi_1|, |\xi - \xi_2|, |\xi - \xi_3|)
\ge |3\xi - \xi_{123}| = 2  |\xi|.\]

\noi
Hence, we have $|\zeta_1|\sim |\zeta_2|\sim |\zeta_3|\sim |\xi|$.

\smallskip
\noi
$\circ$
\underline{Subsubcase 6.f.ii:}
$|\zeta_1|\sim |\zeta_2|\sim |\zeta_3|\sim |\xi|$
 and $\max\{|\xi_{1-2}|,|\xi_{2-3}|,|\xi_{3-1}|\}\sim|\xi|$.
\\
\indent
Let $s = \frac 14$.
Without loss of generality, assume that $|\xi_{1-2}|\sim|\xi|$.
By viewing $\Psi$ as a function of $\xi_2$
for  fixed  $\xi$ and $\xi_3$, 
we have $|\partial_{\xi_2}\Psi(\bar \xi)|\sim|\zeta_3||\xi_{1-2}|\sim|\xi|^2$.
Hence we have
\begin{align*}
\text{LHS of } \eqref{mk4} 
& \les \sup_{\xi}
\bigg(\intt_{\xi = \xi_1 + \xi_2 + \xi_3} 
\ind_{|\Psi(\bar{\xi})-\al |\leq M}\cdot
|\xi|d\xi_2 d\xi_3 \bigg)^\frac{1}{2} 
\\
& \les M^\frac{1}{2}
\sup_{\xi} 
\bigg(\int_{|\xi_3|\sim|\xi|}\frac{1}{|\xi|}d\xi_3 \bigg)^\frac{1}{2} 
\les M^{\frac{1}{2}}.
\end{align*}

\medskip

In the three subsubcases, we assume 
that 
$|\xi_{1-2}|\geq |\xi_{2-3}|\geq |\xi_{3-1}|$
without loss of generality.

\smallskip
\noi
$\circ$
\underline{Subsubcase 6.f.iii:}
$|\zeta_1|\sim |\zeta_2|\sim |\zeta_3|\sim |\xi|$ and 
$1\les|\xi_{1-2}|\ll|\xi|$.

Let  $s = \frac 14$.
For fixed $\xi$ and a  dyadic number $1\les N\ll|\xi|$, 
suppose that  $|\xi_{1-2}|\sim N$.
Then, by writing $\xi=3\xi_3+\xi_{1-2}+2\xi_{2-3}$, 
we see that  $\xi_3$ is contained in an interval $I_3(\xi,N)$ of length $\les N$.
Moreover, for fixed $\xi$ and $\xi_3$, we have
 $|\dd_{\xi_2}\Psi(\bar \xi)|\sim|\zeta_3||\xi_{1-2}| \sim|\xi|N$.
 Hence, we obtain
\begin{align*}
\text{LHS of } \eqref{mk4} 
& \les \sup_{\xi}
\bigg(\sum_{1\les N\ll|\xi|}\intt_{\xi = \xi_1 + \xi_2 + \xi_3} 
\ind_{|\Psi(\bar{\xi})-\al |\leq M}\cdot\ind_{|\xi_{1-2}|\sim N}
|\xi|d\xi_2 d\xi_3 \bigg)^\frac{1}{2} 
\\
& \les  M^{\frac{1}{2}}
\sup_{\xi} 
\bigg(\sum_{1\les N\ll|\xi|}\int_{\xi_3\in I_3(\xi,N)}\frac{1}{ N}d\xi_3 \bigg)^\frac{1}{2} \\
& \les \jb{\al}^{0+}M^{\frac{1}{2}+}
\sup_{\xi} 
\bigg(\sum_{1\les N\ll|\xi|}\frac{1}{|\xi|^{0+}} \bigg)^\frac{1}{2} 
\les \jb{\al}^{0+} M^{\frac{1}{2}}, 
\end{align*}

\noi
where we used \eqref{K10a} in the penultimate step.

%
% $|\xi_{1-2}|=\max\{|\xi_{1-2}|,|\xi_{2-3}|,|\xi_{3-1}|\}$.
%In this case, $\xi_1$, $\xi_2$, $\xi_3$ and $\xi$ have the same sign.
%Since $\xi_{1-2}+\xi_{2-3}+\xi_{3-1}=0$, 
%we have $|\xi_{1-2}|\sim|\xi_{2-3}|$.
%Assume that $|\xi_{2-3}|=\med\{|\xi_{1-2}|,|\xi_{2-3}|,|\xi_{3-1}|\}$.
%With $\xi_1=\xi_3+\xi_{1-2}+\xi_{2-3}$, and $\xi_2=\xi_3+\xi_{2-3}$, one can write 
%

\smallskip

\noi
$\circ$
\underline{Subsubcase 6.f.iv:} 
$|\zeta_1|\sim |\zeta_2|\sim |\zeta_3|\sim |\xi|$, 
$|\xi_{1-2}| \ll 1$, and 
$|\xi_{3-1}|\les \frac{M}{|\xi|}$.

Let $s = \frac 14$.
Arguing as in Subsubcase 6.f.iii, 
we see that for fixed $\xi$, 
 $\xi_3$ is  contained in an interval $I(\xi)$ of length $O(1)$.
Hence we have
\begin{align*}
\text{LHS of } \eqref{mk4} 
& \les \sup_{\xi}
\bigg(\intt_{\substack{\xi_3\in I(\xi)\\ |\xi_1-\xi_3|\lesssim\frac{M}{|\xi|}} }
\ind_{|\Psi(\bar{\xi})-\al |\leq M}\cdot
|\xi|d\xi_1 d\xi_3 \bigg)^\frac{1}{2} 
\\
& \les M^\frac{1}{2}
\sup_{\xi} 
\bigg(\int_{\xi_3\in I(\xi)}1d\xi_3 \bigg)^\frac{1}{2} \les M^\frac{1}{2}.
\end{align*}

\medskip

\noi
$\circ$
\underline{Subsubcase 6.f.v:}
$|\zeta_1|\sim |\zeta_2|\sim |\zeta_3|\sim |\xi|$, 
and 
$ \frac{M}{|\xi|}\ll |\xi_{3-1}|
\leq |\xi_{1-2}| \ll 1$.

Let  $s = \frac 14$.
For fixed $\xi$ and a dyadic number $1\ll N\ll\frac{|\xi|}{M}$, 
suppose that 
$|\xi_{1-2}|\sim N^{-1}$.
Then, 
arguing as in Subsubcase 6.f.iii, 
we see that 
 $\xi_3$ is contained in an interval $\wt{I}_3(\xi,N)$ of  length $\les N^{-1}$.
Moreover, for fixed $\xi$ and $\xi_3$, 
we have $|\dd_{\xi_2}\Psi(\bar \xi)|\sim|\zeta_3||\xi_{1-2}| \sim|\xi|N^{-1}$.
Hence, with \eqref{K10a}, we obtain
\begin{align*}
\text{LHS of } \eqref{mk4} 
& \les \sup_{\xi}
\bigg(\sum_{1\ll N\ll \frac{|\xi|}{M}}\intt_{\xi = \xi_1 + \xi_2 + \xi_3} 
\ind_{|\Psi(\bar{\xi})-\al |\leq M}\cdot\ind_{|\xi_{1-2}|\sim N^{-1}}
|\xi| d\xi_2 d\xi_3 \bigg)^\frac{1}{2} 
\\
& \les \jb{\al}^{0+} M^{\frac{1}{2}+}
\sup_{\xi} 
\bigg(\sum_{1\ll N\ll \frac{|\xi|}{M}}\int_{\xi_3\in \widetilde{I}_3(\xi,N)}
\frac{N}{|\xi|^{0+}} d\xi_3 \bigg)^\frac{1}{2} \\
& \les \jb{\al}^{0+} M^{\frac{1}{2}+}
\sup_{\xi} 
\bigg(\sum_{N\gg 1}%\ll N\ll \frac{|\xi|}{M}}
\frac{1}{M^{0+}N^{0+}} \bigg)^\frac{1}{2} 
\les \jb{\al}^{0+} M^{\frac{1}{2}+}. 
\end{align*}

\noi
This completes the proof of Lemma \ref{LEM:mk1}.
\end{proof}

\smallskip

\noi
{\bf Concluding remark:}
In Section \ref{SEC:3}, we presented the full details
of the normal form reductions
since this is the first paper, where we handle multilinear estimates
by successive applications of the trilinear localized modulation estimate.
The essential part for establishing an a priori estimate
in $L^2(\R)$ for the cubic NLS and in $H^\frac{1}{4}(\R)$
for the mKdV  
appears in  Subsection~\ref{SUBSEC:3.3}, 
where we applied
 the localized modulation estimates %(Lemmas \ref{LEM:NLS1} and \ref{LEM:KdV1}) 
 from Section \ref{SEC:2}.
In Section~\ref{SEC:4}, we also needed to prove another localized modulation estimate 
(in the weak norm: Lemmas \ref{LEM:NLS3} and \ref{LEM:mk1})
for justifying the formal computations in Section \ref{SEC:3}, 
where an extra complication was introduced for the mKdV problem 
due to the derivative nonlinearity.
In essence, 
our  method  allows one to 
 reduce the  entire problem of proving unconditional well-posedness
to simply proving two basic trilinear estimates
(i.e.~localized modulation estimates in the strong norm
and in the weak norm: Lemmas~\ref{LEM:NLS1} and \ref{LEM:NLS3}
for the cubic NLS 
and Lemma~\ref{LEM:KdV1} and \ref{LEM:mk1}
for the mKdV).
This reduction is the main novelty of the paper
and such a reduction provides a significant simplification 
in studying unconditional well-posedness
for various dispersive PDEs on $\R^d$ and $\T^d$.

\begin{ackno}\rm
S.K.~was partially supported by National Research Foundation of Korea (grant 
NRF-2015R1D1A1A01058832).
T.O.~was supported by the European Research Council (grant no.~637995 ``ProbDynDispEq'').
H.Y.~was partially supported by National Research Foundation of Korea (grant  2012-H1A2A1049375).
S.K. would like to express his gratitude to  the School of Mathematics at the University of Edinburgh 
for its hospitality during his visit in the academic year 2015 - 2016.
The authors would like to thank Nobu Kishimoto for a helpful discussion
and Didier Pilod for a helpful comment.
They are also grateful to Razvan Mosincat for careful proofreading.
\end{ackno}

\end{document}